\begin{document}
\def\pd#1#2{\frac{\partial#1}{\partial#2}}
\def\dfrac{\displaystyle\frac}
\let\oldsection\section
\renewcommand\section{\setcounter{equation}{0}\oldsection}
\renewcommand\thesection{\arabic{section}}
\renewcommand\theequation{\thesection.\arabic{equation}}

\def\Xint#1{\mathchoice
  {\XXint\displaystyle\textstyle{#1}}%
  {\XXint\textstyle\scriptstyle{#1}}%
  {\XXint\scriptstyle\scriptscriptstyle{#1}}%
  {\XXint\scriptscriptstyle\scriptscriptstyle{#1}}%
  \!\int}
\def\XXint#1#2#3{{\setbox0=\hbox{$#1{#2#3}{\int}$}
  \vcenter{\hbox{$#2#3$}}\kern-.5\wd0}}
\def\ddashint{\Xint=}
\def\dashint{\Xint-}

\newcommand{\be}{\begin{equation} \label}
\newcommand{\ee}{\end{equation}}
\newcommand{\beaa}{\begin{eqnarray}}
\newcommand{\bea}{\begin{eqnarray}\label}
\newcommand{\eea}{\end{eqnarray}}
\newcommand{\bas}{\begin{eqnarray*}}
\newcommand{\eas}{\end{eqnarray*}}
\newcommand{\nn}{\nonumber}
\newcommand{\Om}{\Omega}
\newcommand{\iO}{\int_\Omega}
\newcommand{\into}{\int_\Omega}
\newcommand{\intO}{\int_\Omega}
\newcommand{\cd}{\cdot}
\newcommand{\pa}{\partial}
\newcommand{\ep}{\varepsilon}
\newcommand{\Lp}{L^p(\O)}
\newcommand{\Lq}{L^q(\O)}
\newcommand{\de}{\delta}
\newcommand{\iT}{\int_{t_0}^t}
\newcommand{\Li}{L^\infty(\O)}
\newcommand{\Dtau}{(\frac{\Delta-1}{\tau})}
\newcommand{\1}{\delta_1}
\newcommand{\2}{\delta_2}
\newcommand{\nep}{n_\ep}
\newcommand{\cep}{c_\ep}
\newcommand{\uep}{u_\ep}
\renewcommand{\O}{\Omega}

\newcommand{\To}{\Rightarrow}
\newcommand{\blue}[1]{{\color{blue}#1}}
\newcommand{\red}[1]{{\color{red}#1}}
\newcommand{\green}[1]{{\color{green}#1}}
\newcommand{\nbar}{\bar{n}}
\newcommand{\Lin}{L^\infty(\O)}
\newcommand{\norm}[2][ ]{\|#2\|_{#1}}
\newcommand{\wstarto}{\stackrel{\star}{\rightharpoonup}}
\newcommand{\cred}{\color{red}}
\newcommand{\set}[1]{\{#1\}}

\newtheorem{thm}{Theorem}
\newtheorem{lem}{Lemma}[section]
\newtheorem{proposition}{Proposition}[section]
\newtheorem{dnt}{Definition}[section]
\newtheorem{remark}[lem]{Remark}
\newtheorem{cor}{Corollary}[section]
\allowdisplaybreaks

\title{Global classical solutions in chemotaxis(-Navier)-Stokes system with rotational flux term}
\author{
Xinru Cao\thanks{Insitute for Mathematical Sciences, Renmin University of China, Zhongguancun Str. 59, 100872 Beijing, China; email: caoxinru@gmail.com; Supported by the Fundamental Research Funds for the Central Universities, and the Research Funds of Renmin University of China (15XNLF21).}
}
{\small }
{\small }
\maketitle
\date{}
\begin{abstract}
\noindent
The coupled chemotaxis  fluid system
\bas
\left\{
\begin{array}{llc}
\displaystyle
n_t=\Delta n-\nabla\cdot(nS(x,n,c)\cdot\nabla c)-u\cdot\nabla n,
&(x,t)\in \Omega\times (0,T),\\[4pt]
\displaystyle
c_t=\Delta c-nc-u\cdot\nabla c , &(x,t)\in\Omega\times (0,T),\\[4pt]
\displaystyle
u_t=\Delta u-\kappa(u\cdot\nabla)u+\nabla P+n\nabla\phi , &(x,t)\in\Omega\times (0,T),\\[4pt]
\nabla\cdot u=0,&(x,t)\in\Omega\times (0,T),
\end{array}
\right.(\star)
\eas
is considered under the no-flux boundary conditions for $n,c$
and the Dirichlet boundary condition for $u$
on a bounded smooth domain $\O\subset\mathbb{R}^N$ ($N=2,3$), $\kappa=0,1$.
We assume that $S(x,n,c)$ is a matrix-valued sensitivity under
a mild assumption such that $|S(x,n,c)|<S_0(c_0)$ with some non-decreasing function $S_0\in C^2((0,\infty))$.
It contrasts the related scalar sensitivity case that
$(\star)$ does not possess the natural {\em gradient-like} functional structure. Associated estimates based on the natural functional seem no longer available.
In the present work, a global classical solution is constructed under a smallness assumption on $\|c_0\|_{L^\infty(\Om)}$ and moreover we obtain boundedness and large time convergence for the solution, meaning that small initial concentration of chemical forces stabilization.

\vspace{15pt}

\noindent
{\bf Key words:}  chemotaxis; Navier-Stokes; global existence; large time behavior\\
\noindent {\bf AMS Classification:} 35D05, 35K45
\end{abstract}

\section{Introduction}
In this paper, we study the chemotaxis-Navier-Stokes system
\begin{align}\label{eq}
\begin{cases}
n_t=\Delta n-\nabla\cdot(nS(x,n,c)\cdot\nabla c)-u\cdot\nabla n,
&(x,t)\in \Omega\times (0,T),\\
\displaystyle
c_t=\Delta c-nc-u\cdot\nabla c , &(x,t)\in\Omega\times (0,T),\\
\displaystyle
u_t=\Delta u-\kappa(u\cdot\nabla)u+\nabla P+n\nabla\phi , &(x,t)\in\Omega\times (0,T),\\
\displaystyle
\nabla\cdot u=0, &(x,t)\in\Omega\times (0,T),\\
\displaystyle
\nabla c\cdot\nu=(\nabla n-S(x,n,c)\nabla c)\cdot\nu=0, u=0, &(x,t)\in\pa\O\times (0,T),\\
\displaystyle
n(x,0)=n_0(x), v(x,0)=v_0(x), u(x,0)=u_0(x), &x\in\O,
\end{cases}
\end{align}
where $T\in(0,\infty]$, $\kappa=0,1$, $\O\subset\mathbb{R}^N$ (N=2,3) is a bounded domain with smooth boundary and $\nu$ denotes the outward normal vector on $\pa\O$.
Here $S(x,n,c)=(s_{ij}(x,n,c))_{i,j\in\{1,2\}}$ is a matrix-valued function and $\phi\in W^{1,\infty}(\Om)$.\\

The PDE system of type (\ref{eq}) has been proposed by Tuval \cite{tuvel} to describe the motion of oxygen consumed by bacteria in a drop of water. Here $n$ and $c$ denote the density of Bacteria and concentration of oxygen, respectively. We also write the fluid velocity by $u$ and the associated pressure by $P$. In addition to random diffusion, the bacteria bias their movement to the favorable direction which is determined by the environment and distribution of oxygen consumed by the bacteria themselves. Meanwhile, both the oxygen and bacteria are supposed to be transported by the surrounding fluid. Let $\phi$ be a potential function; the fluid motion is described by incompressible Navier-Stokes equation and also influenced by external force $n\nabla\phi$, which can be understood as buoyant, electric or magnetic force of bacterial mass. This mechanism is an important variation of chemotaxis model, which has been extensively studied in the past 40 years; we refer to surveys \cite{hillen_painter,horstmann,bbtw} for a broad view.\\

In the paper \cite{tuvel}, $S=\chi\cdot\mathbb{I}$ with $\chi\in\mathbb{R}$, thus the cross diffusion term reduces to $\nabla\cdot(n\chi\nabla c)$, which indicates that the bacteria always move towards the higher concentration of oxygen. Therefore, a  coupled chemotaxis fluid model reads as
\begin{equation}\label{eq:system_without_rot}
\left\{
\begin{array}{rll}
 n_t&=&\Delta n - \nabla\cdot(\chi(c)n\nabla c) - u\cdot \nabla n,\\[4pt]
 c_t&=&\Delta c - nf(c) -u\cdot\nabla c, \\[4pt]
 u_t&=&\Delta u -(u\cdot\nabla)u+\nabla P+n\nabla \phi,\\[4pt]
\nabla \cdot u&=&0.
 \end{array}
\right.
\end{equation}
Actually, under suitable assumptions on $\chi$ and $f$, which are mild enough such that the prototypical choice $\chi(c)\equiv 1$ and $f(c)\equiv c$ is allowed, and a natural gradient-like functional for (\ref{eq:system_without_rot}) is expressed as
\begin{align}\label{ineq:functional}
\frac{d}{dt}\left(\intO n\ln n+\frac12\intO \frac{|\nabla c|^2}{c}\right)+\intO \left(\frac{|\nabla n|^2}{n}+c|D^2\ln c|^2\right)\le C\intO |u|^4
\end{align}
with some constant $C>0$. A crucial point to identify the above functional is that the term $\intO \chi(c)\nabla n\cdot\nabla c$ from a natural Lyapunov functional for the first equation can be cancelled by
a suitable testing procedure on the second equation. Starting from (\ref{ineq:functional}), a large number of articles have gained
 considerable results. Global classical solutions are demonstrated for two-dimensional bounded domains \cite{win_cpde}. Beyond this, a deeper understanding of the functional leads to boundedness of solutions for large initial data, and furthermore the solutions approach the spatially homogeneous equilibrium \cite{win_arma}:
\[
(n,c,u)\rightarrow(\nbar_0,0,0) \text{ as } t\to\infty,
 \]
 where $\nbar_0=\frac1{|\Om|}\intO n_0$. The convergence rates are studied later in \cite{zhang_li_dcdcb} for the convergent solutions. Concerning the case $N=3$, (\ref{ineq:functional}) is still crucial, \cite{win_cpde} asserts the existence of global weak solutions for the Stokes-governed system based on it. Recently, a global weak solution is constructed for the full Navier-Stokes system for large initial data \cite{win_hp}. More recently, with a concept of {\em eventual energy solution}, \cite{win_energ3d} shows that such solutions become smooth after finite time and uniformly converge to the constant steady state in the large time limit. For more results depending on the natural functional like (\ref{ineq:functional}), see, e.g., \cite{lorz,liu_lorz,duan_lorz_markowich,chae_kang_lee_smooth,tao_win_hp,win_hp,win_cpde} and the references therein.\\

However, in \cite{xue_othmer}, the authors suggest a wider choice of $S$ due to some complicated interaction neighborhood environment around cells. A kind of interactions between the cell motion speed and directional effects stemming from the action of gravity may result in abnormal mechanism -- they do not move directly to the direction of higher density of oxygen but with some rotation; so this requires $S$ to be a general matrix. Apart from the complexity as it stands, this tensor-valued chemotactic sensitivity also gives rise to some  difficulty in mathematical analysis.
Upon the aforementioned reasoning of (\ref{ineq:functional}), we may see that it heavily relies on the structure of the cross diffusion term. Here the term from the Lyapunov functional reads as $\intO \chi(c)nS\cdot\nabla c\cdot\nabla n$ and it is no longer cancellable for arbitrary choices of $S$. Thus when (\ref{ineq:functional}) is absent, it is much more difficult to study (\ref{eq}) from a mathematical point of view.
\\

Generally, stronger assumptions seem necessary for the existence of classical solutions.
For instance, considering the two-dimensional fluid-free system, that is, $u\equiv 0$, it is shown that the system admits global classical solutions which converge to the constant steady state if $\|c_0\|_{L^\infty(\Omega)}$ is sufficiently small \cite{li_xue_suen_win}. This is in sharp contrast to the case that $S$ is a scalar-valued sensitivity \cite{tao_win_eventual}, where (\ref{ineq:functional}) is still applicable, and finally it leads to boundedness of solutions and large time convergence without any smallness condition on the initial data. On the other hand, the same problem for large data are studied in \cite{win_2drot,win_2drotStokes} with or without fluid effect. In this case, it is shown that a certain generalized solution exists, and converge to the constant steady state in the large time limit. However, the results do not exclude singularity on intermediate time scales.\\

Considering porous medium type cell diffusion, that is, when the first equation in \eqref{eq} is replaced by $n_t=\Delta n^m-\nabla\cdot(n S \cdot\nabla c)$ with $m>1$, the existence of global weak solution is derived for any reasonable regular initial data, moreover, the solution is actually bounded \cite{cao_ishida}. Taking the fluid into account, the coupled Stokes and Navier-Stokes counterpart to the same problem are studied in \cite{tao_win_dcds} and \cite{I2014}, where the authors prove boundedness and global existence of weak solutions.\\

Due to the difficulty arising from the three-dimensional Navier-Stokes equation, only until very recently, the full chemotaxis-Navier-Stokes system (\ref{eq}) with scalar sensitivity is known to admit considerably weak solutions. Accordingly, many works have focused on the simplified Stokes coupled system and shown more progress. Assume that
\(
|S|\le C(1+n)^{-\alpha}\)
with $C>0$ and $\alpha>\frac16$, the global existence of locally bounded classical solution is constructed in \cite{cao_wang}.
 Considering porous medium variant of (\ref{eq}), that is, when the first equation becomes $n_t=\Delta n^m-\nabla\cdot(n\nabla c)-u\cdot\nabla n$, it is proved that locally bounded solutions exist and are locally bounded under the hypothesis that $m>\frac{8}{7}$ \cite{tao_win_hp}. If we assume in addition that $m>\frac 76$, upon a more robust approach, the solutions become globally bounded and converge to $(\overline{n_0},0,0)$ \cite{win_3d}.
 Without assuming superlinear diffusion, the question of boundedness of classical solution is actually more delicate and solved recently in \cite{cao_lan} that even matrix-valued sensitivity is allowed. It is shown that if \(\norm[L^p(\O)]{n-\bar{n}_0}\), \(\norm[L^q(\O)]{\nabla c_0}\) and \(\norm[L^N(\O)]{u_0}\) (with any $p>\frac N2$ and $q>N$) are small, the system admits a unique global classical solution which converge to the homogenous equilibrium. \\

The purpose of the present work is to study the full chemotaxis-Navier-Stokes system with tensor-valued sensitivity in dimension 2 and the corresponding chemotaxis-Stokes system in dimension 3. When the natural Lyapunov functional is lacking, we impose a smallness assumption on the initial data to get some uniform bound for the solution. Using this tool, we can prove global existence of classical solution and its large time behavior. Compared with \cite{cao_lan}, the smallness condition here is only on $\|c_0\|_{L^\infty(\Om)}$, meaning that small concentration of oxygen can force stability. This result coincides with the fluid-free system in \cite{li_xue_suen_win}. 
The convexity of the physical domain is unnecessary in this paper since we use a different approach from many previous works \cite{win_cpde}.\\

Before stating our main result, let us briefly introduce some elementary background of functional spaces, Stokes operator as well as their applications and some notations.

Let
$L_\sigma^p(\O)$  (\(1<p<\infty\)) denote solenoidal space equipped with $\norm[L^p(\O)]{\cdot}$ norm:
\[
L_\sigma^p(\O)£º=\{\varphi\in C_{0}^\infty(\O;\mathbb{R}^N)|\nabla\cdot\varphi=0\}
\]
The so-called Helmholtz-projection is defined as $\mathscr {P}: L^p(\O,R^N)\to L_\sigma^p(\O)$, which is a bounded operator. Let $A_p=-\mathscr P \Delta$
denote the Stokes operator
in $D(A_p)=L_\sigma^p(\O)\cap W^{2,p}(\O)\cap W_0^{1,p}(\O)$. From \cite{giga}, we know that $A$ is sectorial and generates analytical semigroup $(e^{-tA})_{t>0}$ in $L^p_{\sigma}(\O)$. We refer to \cite[Lemma 2.3]{cao_lan} for fundamental $L^p$-$L^q$ estimates for the semigroup. Moreover, since ${\rm Re}\, \sigma(A)>0$,
we can define $A^{-\alpha}$ with $\alpha>0$ and easily check that it is one-to-one.
Thus $A^\alpha$ is defined as the inverse of $A^{-\alpha}$ and $D(A^\alpha)=R(A^{-\alpha})$ \cite[Chapter 2.6]{pazy}. The following estimate is fundamental:
\begin{align}\label{fraction}
\|A^{\alpha}e^{tA}\|\le C_\alpha t^{-\alpha}e^{-\mu t} \text{ for } t>0 \text{ and for some } \mu>0.
\end{align}
Throughout the paper, we denote the first eigenvalue of $A$ by $\lambda_1'$, and by $\lambda_1$ the first nonzero eigenvalue of $-\Delta$ on $\Om$ under Neumann boundary conditions.
Moreover, we assume that
\begin{align}\label{1:SC2}
& s_{ij}\in C^2(\overline{\Omega}\times[0,\infty)\times [0,\infty)),\\
\label{1:S}
&|S(x,n,c)|:=\max_{i,j\in\{1,2\} }\{s_{ij}(x,n,c)\} \le S_0(c)\,\,
\mbox{for all }(x,n,c)\in\overline{\Omega}\times[0,\infty)\times [0,\infty),
\end{align}
where $S_0$ is a non-decreasing function on $[0,\infty)$.
The initial data are chosen as
\begin{align}\label{1:initial1}
\left\{
\begin{array}{llc}
  &n_0\in L^\infty(\O),\\[6pt]
  &c_0\in W^{1,q}(\O),\ q>N,\\[6pt]
  &u_0\in D(A^\alpha),\ \alpha\in (\tfrac{N}{4},1),
  \end{array}
  \right.
  \end{align}
and particularly
\begin{align}\label{1:initial2}
n_0\ge0,\;\;c_0\ge 0 \text{ on }\O.
\end{align}

Under the above assumptions and notations, our main result is as follows:
\begin{thm}\label{th1}
Let $N\in\{2,3\}$, $\O\subset\mathbb{R}^N$ be a bounded domain with smooth boundary.
Assume that $S$ fulfills (\ref{1:SC2}-\ref{1:S}). Either of the following conditions holds,
\begin{enumerate}
\item [(i)]  $N=2$, $\kappa=1$;
\item [(ii)] $N=3$, $\kappa=0$.
\end{enumerate}
There is $\delta_0>0$ with the following property: If the initial data fulfill (\ref{1:initial1}-\ref{1:initial2}), and
\bea{c0}
\|c_0\|_{\Lin}<\delta_0,
\eea
then (\ref{eq}) admits a global classical
solution $(n,c,u,P)$ which is bounded, and satisfies
\begin{equation}\label{sol_regularity}
\left\{
\begin{array}{llc}
&n\in C^{2,1}(\overline{\Om}\times(0,\infty))\cap C^0_{loc}(\overline{\O}\times(0,\infty)),\\[6pt]
&c\in C^{2,1}(\overline{\Om}\times(0,\infty))\cap
C^0_{loc}(\overline{\O}\times(0,\infty))\cap L^\infty((0,\infty);W^{1,q}(\O)),\\[6pt]
&u\in C^{2,1}(\overline{\Om}\times(0,\infty))\cap L^\infty((0,\infty);D(A^\alpha))\cap C^0_{loc}([0,\infty);L^2(\O)),\\[6pt]
&P\in L^1((0,\infty);W^{1,2}(\O)).
\end{array}
\right.
\end{equation}
\end{thm}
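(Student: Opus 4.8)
\emph{Overview and local theory.} I would follow the standard three-stage programme for chemotaxis--fluid systems: first a local classical solution with an extensibility criterion, then a priori bounds uniform on the whole existence interval, and finally the passage to global existence together with the large-time analysis. The organising principle is that the smallness condition (\ref{c0}), transported in time by the maximum principle and then regularised, renders the non-cancellable cross-diffusion term a genuinely subordinate perturbation. Concretely, I would invoke (or reprove by a contraction argument in a suitable product space, as in e.g.\ \cite{cao_lan}) a maximal classical solution $(n,c,u,P)$ on $\Omega\times(0,T_{max})$, $T_{max}\in(0,\infty]$, with the property that if $T_{max}<\infty$ then $\limsup_{t\uparrow T_{max}}\bigl(\|n(\cdot,t)\|_{L^\infty(\Omega)}+\|c(\cdot,t)\|_{W^{1,q}(\Omega)}+\|A^\alpha u(\cdot,t)\|_{L^2(\Omega)}\bigr)=\infty$. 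From $n_0,c_0\ge0$ one gets $n,c\ge0$; integrating the first equation gives $\int_\Omega n(\cdot,t)\equiv\int_\Omega n_0$; and since $nc\ge0$ and $\nabla\cdot u=0$, the comparison principle for the second equation yields $\|c(\cdot,t)\|_{L^\infty(\Omega)}\le\|c_0\|_{L^\infty(\Omega)}<\delta_0$, while testing that equation by $c$ and by $1$ gives the space--time bounds $\int_0^{T_{max}}\!\int_\Omega|\nabla c|^2\le\tfrac12\|c_0\|_{L^2(\Omega)}^2$ and $\int_0^{T_{max}}\!\int_\Omega nc\le\|c_0\|_{L^1(\Omega)}$.

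\emph{A priori estimates.} The crucial point is that $\nabla c$ is small after an initial transient. For $t\ge1$ the representation
\[
\nabla c(\cdot,t)=\nabla e^{(t-1)\Delta}c(\cdot,t-1)-\int_{t-1}^{t}\nabla e^{(t-s)\Delta}\bigl(nc+\nabla\cdot(uc)\bigr)(\cdot,s)\,ds,
\]
combined with the $L^p$--$L^q$ estimates for the Neumann heat semigroup and $\|c\|_{L^\infty}\le\delta_0$, bounds $\|\nabla c(\cdot,t)\|_{L^q(\Omega)}$ by $C\delta_0$ plus $C\delta_0$ times suitable $L^r$-norms of $n$ and $u$ at earlier times; note this route produces no boundary integral $\int_{\partial\Omega}\partial_\nu|\nabla c|^2$, which is why the convexity of $\Omega$ is unnecessary. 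In parallel, testing the first equation by $n^{p-1}$ and using $|S|\le S_0(\delta_0)$ and Young's inequality yields
\[
\tfrac1p\tfrac{d}{dt}\!\int_\Omega n^p+c_1\!\int_\Omega|\nabla n^{p/2}|^2\le C\!\int_\Omega n^p|\nabla c|^2,
\]
and Hölder's inequality together with Gagliardo--Nirenberg and Young absorbs the right-hand side into $\tfrac{c_1}{2}\int_\Omega|\nabla n^{p/2}|^2+C(\|\nabla c\|_{L^q}^{2/(1-\theta)}+\|\nabla c\|_{L^q}^2)$ with an explicit $\theta\in(0,1)$ for $N\in\{2,3\}$ and $p,q$ large. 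Feeding these into a continuity/bootstrap argument on $(0,T_{max})$ --- alongside the energy inequality $\tfrac{d}{dt}\|u\|_{L^2}^2+\|\nabla u\|_{L^2}^2\le C\|n\|_{L^{6/5}}^2$ for the (Navier--)Stokes part and the fractional-power bounds (\ref{fraction}), where the constraint $N/4<\alpha<1$ is used to ensure both $D(A^\alpha)\hookrightarrow L^\infty$ and the integrability of the kernel in the term $\mathscr{P}\nabla\cdot(u\otimes u)$ present when $N=2$ --- the smallness of $\delta_0$ keeps $\|\nabla c\|_{L^q}$ small enough that the $n$-inequality reduces to $\tfrac{d}{dt}\int_\Omega n^p+\int_\Omega n^p\le C$. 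This gives $\sup_{t<T_{max}}\|n(\cdot,t)\|_{L^p}<\infty$, hence $\sup_{t<T_{max}}\|\nabla c(\cdot,t)\|_{L^q}<\infty$ and $\sup_{t<T_{max}}\|A^\alpha u(\cdot,t)\|_{L^2}<\infty$; a Moser-type iteration on the first equation then bounds $\|n(\cdot,t)\|_{L^\infty}$, so the extensibility criterion forces $T_{max}=\infty$, and parabolic/Stokes regularity theory delivers (\ref{sol_regularity}).

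\emph{Boundedness and convergence.} The above bounds are uniform on $(0,\infty)$, which is the asserted boundedness. For the asymptotics I would use an eventual uniform lower bound $n\ge\eta>0$ for $t\ge1$, available by a Harnack-type argument applied to the first equation written as $n_t=\Delta n-\nabla\cdot\bigl(n(S\nabla c+u)\bigr)$ with now-bounded drift and using $\int_\Omega n\equiv|\Omega|\bar n_0>0$ (the case $\bar n_0=0$ being trivial); together with $\int_0^\infty\!\int_\Omega nc<\infty$ this forces $\int_0^\infty\!\int_\Omega c<\infty$, and since $t\mapsto\int_\Omega c$ is nonincreasing, $\int_\Omega c(\cdot,t)\to0$, whence by interpolation with $\|c\|_{L^\infty}\le\delta_0$ and the uniform $W^{1,q}$-bound, $\|c(\cdot,t)\|_{L^\infty(\Omega)}\to0$ (and likewise $\|\nabla c(\cdot,t)\|_{L^4(\Omega)}\to0$ via smoothing). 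Testing the first equation by $n-\bar n_0$ and using Poincaré then gives $\tfrac{d}{dt}\|n-\bar n_0\|_{L^2}^2+\lambda_1\|n-\bar n_0\|_{L^2}^2\le C\|\nabla c\|_{L^4}^2\to0$, so $\|n(\cdot,t)-\bar n_0\|_{L^2(\Omega)}\to0$; since $\mathscr{P}(n\nabla\phi)=\mathscr{P}((n-\bar n_0)\nabla\phi)$, the variation-of-constants formula for $u$ with the exponentially decaying Stokes semigroup gives $\|u(\cdot,t)\|_{L^2(\Omega)}\to0$, and interpolation with the uniform higher-order bounds promotes all three convergences to the norms implicit in (\ref{sol_regularity}).

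\emph{Main obstacle.} The heart of the matter is closing the $n$-estimate in the second step: absent the gradient-like functional (\ref{ineq:functional}), the cross-diffusive contribution $\int_\Omega n^p\nabla n\cdot(S\nabla c)$ has nothing coming from the $c$-equation to cancel it and must be absorbed by the diffusion of $n$ at the price of a factor governed by $\|\nabla c\|_{L^q}$; the whole argument succeeds only because this factor can be made small uniformly for large times. Turning the hypothesis $\|c_0\|_{L^\infty}<\delta_0$ --- and only that hypothesis, with no smallness on $n_0$, $\nabla c_0$, $u_0$ --- into this quantitative weak-coupling statement, while keeping the mutually dependent bounds on $n$, $\nabla c$ and $u$ coherent inside a single continuity argument (and controlling the quadratic Navier--Stokes nonlinearity when $N=2$, which is precisely why the case $N=3$ is restricted to $\kappa=0$), is the delicate part.
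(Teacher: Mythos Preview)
Your overall strategy is reasonable, but the core step --- closing the $L^p$ estimate for $n$ --- contains a circularity that prevents you from obtaining a \emph{universal} $\delta_0$. You bound $\int_\Omega n^p|\nabla c|^2$ in terms of $\|\nabla c\|_{L^q}$, and you bound $\|\nabla c\|_{L^q}$ via the Duhamel formula by $C\delta_0\bigl(1+\sup_s\|n(\cdot,s)\|_{L^p}+\sup_s\|u(\cdot,s)\|\bigr)$ (the factor $\delta_0$ coming from $\|c\|_{L^\infty}\le\delta_0$ in the $nc$ and $uc$ terms). Feeding this back, the continuity argument requires $C\delta_0\bigl(1+\sup\|n\|_{L^p}+\dots\bigr)<\eta$, but the running supremum of $\|n\|_{L^p}$ is controlled by your differential inequality only in terms of its value at the starting time, i.e.\ ultimately in terms of $\|n_0\|_{L^p}$ (or whatever local-theory bound you inherit). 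Hence the admissible $\delta_0$ shrinks as $n_0$ grows, and you do not get the statement of the theorem, which fixes $\delta_0$ first and then allows arbitrarily large $n_0\in L^\infty$, $\nabla c_0\in L^q$, $u_0\in D(A^\alpha)$.

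The paper avoids this with a weighted functional (Lemma~\ref{lem:Lp}): for $\varphi(s)=(\delta_0-s)^{-h}$ with a small fixed $h>0$, one differentiates $\int_\Omega n^p\varphi(c)$ using \emph{both} equations. The $c$-equation then contributes, after integration by parts, a good term $-\int_\Omega n^p\varphi''(c)|\nabla c|^2$ with $\varphi''>0$, and this absorbs every cross-diffusion contribution provided $\delta_0$ satisfies purely algebraic constraints of the form $3p(p-1)\delta_0^2S_0^2(\delta_0)\le h(h+1)$ and $3p\delta_0 S_0(\delta_0)\le h+1$, which involve $p$ and $S_0$ but no norm of any initial datum. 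One obtains $\frac{d}{dt}\int_\Omega n^p\varphi(c)\le 0$ outright, hence $\|n(\cdot,t)\|_{L^p}\le C(\|n_0\|_{L^p},\|c_0\|_{L^\infty})$ with $\delta_0=\delta_0(p)$ universal. The downstream estimates (on $u$, then $\nabla c$, then $\|n\|_{L^\infty}$) then proceed by semigroup arguments much as you sketch, but now as genuine consequences rather than as part of a coupled bootstrap.

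A secondary point: because the no-flux condition on $n$ reads $(\nabla n-nS\nabla c)\cdot\nu=0$ and is nonlinear, the paper first regularises by $S_\varepsilon=\rho_\varepsilon S$ with $\rho_\varepsilon\in C_0^\infty(\Omega)$, so that the approximate problem carries pure Neumann conditions; the uniform-in-$\varepsilon$ estimates are proved for this family and one passes to the limit. Your proposal to work directly with the original system glosses over this.
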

\begin{remark}
The uniqueness of classical solutions in the indicated class can be proved similarly as in \cite{win_cpde}.
\end{remark}
Apart from boundedness and global existence, we can also show each component converges to the homogenous equilibrium with {\em optimal} rates.
\begin{cor}\label{cor:con_regularity}
Under the assumptions of Theorem \ref{th1}, let $0<\alpha<\min\{\nbar_0,\lambda_1\}$ and $0<\alpha'<\min\{\alpha,\lambda'_1\}$. The solution of \eqref{eq} has the property that there is $C>0$ fulfilling
\[
\norm[L^\infty(\O)]{n(\cdot,t)-\nbar_0} \leq Ce^{-\alpha t}, \qquad \norm[W^{1,q_0}(\O)]{ c(\cdot,t)}\leq Ce^{-\alpha t},\qquad \norm[L^\infty(\O)]{u(\cdot,t)}\leq Ce^{-\alpha' t} \mbox{ for all } t>0.
\]
\end{cor}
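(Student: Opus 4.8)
The plan is to combine the mass conservation identity with the decay estimates for the Neumann heat semigroup on mean‑zero functions and for the Stokes semigroup, after first establishing rate‑free convergence. Integrating the first equation in \eqref{eq} over $\O$ and using the no‑flux boundary condition together with $\nabla\cdot u=0$ and $u|_{\pa\O}=0$ shows $\int_\Omega n(\cdot,t)=|\O|\nbar_0$ for all $t>0$; by the maximum principle applied to the second equation, $0\le c\le\norm[\Lin]{c_0}$; and Theorem \ref{th1} supplies $M>0$ with $\norm[\Lin]{n(\cdot,t)}+\norm[L^q(\O)]{\nabla c(\cdot,t)}+\norm[L^2(\O)]{A^\alpha u(\cdot,t)}\le M$ for all $t>0$. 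A structural remark used repeatedly is that, $\nabla\phi$ being a gradient, $\mathscr P(n\nabla\phi)=\mathscr P((n-\nbar_0)\nabla\phi)$, so the buoyancy forcing is already controlled by $\norm[L^2(\O)]{n-\nbar_0}$; the three relevant linear rates are $\nbar_0$ (from $-nc\approx-\nbar_0 c$), $\lambda_1$ (spectral gap of the Neumann Laplacian), and $\lambda_1'$ (spectral gap of the Stokes operator).

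Next I would establish rate‑free convergence. Testing the second equation by $c$ yields $\tfrac{d}{dt}\int_\Omega c^2+2\int_\Omega|\nabla c|^2+2\int_\Omega nc^2=0$, hence $\int_0^\infty(\int_\Omega|\nabla c|^2+\int_\Omega nc^2)<\infty$; testing the first equation, written for $w:=n-\nbar_0$, by $w$ and invoking \eqref{1:S}, the $L^\infty$‑bound on $n$ and Young's inequality gives $\tfrac{d}{dt}\int_\Omega w^2+\int_\Omega|\nabla w|^2\le C\int_\Omega|\nabla c|^2$, so by the Poincaré inequality $\int_0^\infty\int_\Omega|\nabla w|^2<\infty$ and $\int_\Omega w^2(\cdot,t)\to0$; an energy identity for $u$ gives $\norm[L^2(\O)]{u(\cdot,t)}\to0$. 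Coupling these $L^2$‑decays with the uniform higher regularity of Theorem \ref{th1} and parabolic/Stokes smoothing on unit time intervals (as in \cite{cao_lan,win_arma}) upgrades them to $\norm[\Lin]{c(\cdot,t)}\to0$, $\norm[\Lin]{n(\cdot,t)-\nbar_0}\to0$, $\norm[\Lin]{u(\cdot,t)}\to0$. In particular, given $\rho>0$ there is $T_0$ with all three quantities $\le\rho$ for $t\ge T_0$.

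To capture the sharp rate for $c$, fix $\alpha<\min\{\nbar_0,\lambda_1\}$ and write $c=\bar c(t)+\tilde c(x,t)$ with $\bar c:=\tfrac1{|\O|}\int_\Omega c$ and $\int_\Omega\tilde c=0$. Using $\int_\Omega nc=|\O|\nbar_0\bar c+\int_\Omega(n-\nbar_0)\tilde c$ one obtains, for $t\ge T_0$, $\bar c'\le-\nbar_0\bar c+\rho|\O|^{-1/2}\norm[L^2(\O)]{\tilde c}$; testing the equation for $\tilde c$ by $\tilde c$, using $\int_\Omega(u\cdot\nabla\tilde c)\tilde c=0$, discarding $\int_\Omega n\tilde c^2\ge0$, and applying the Poincaré inequality gives $\tfrac{d}{dt}\norm[L^2(\O)]{\tilde c}^2+2\lambda_1\norm[L^2(\O)]{\tilde c}^2\le2\rho|\O|^{1/2}\bar c\norm[L^2(\O)]{\tilde c}$. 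Thus $(\bar c,\norm[L^2(\O)]{\tilde c})$ satisfies a cooperative $2\times2$ linear differential inequality whose largest eigenvalue tends to $-\min\{\nbar_0,\lambda_1\}$ as $\rho\to0$; choosing $\rho$ (hence $T_0$) small, comparison yields $\bar c(t)+\norm[L^2(\O)]{\tilde c(\cdot,t)}\le Ce^{-\alpha t}$, whence $\norm[L^2(\O)]{c(\cdot,t)}\le Ce^{-\alpha t}$. A bootstrap through the variation‑of‑constants formula for the second equation, using the $L^p$–$L^q$ heat semigroup estimates and the uniform bounds, then gives $\norm[\Lin]{c(\cdot,t)}\le Ce^{-\alpha t}$ and $\norm[W^{1,q_0}(\O)]{c(\cdot,t)}\le Ce^{-\alpha t}$.

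Finally, for $n$ I would write $w_t=\Delta w-\nabla\cdot(nS\nabla c)-\nabla\cdot(uw)$ and use Duhamel with the Neumann heat semigroup, whose $L^{q_0}$–$L^\infty$ action on mean‑zero functions carries the factor $e^{-\lambda_1 t}$; combining $\norm[L^{q_0}(\O)]{nS\nabla c}\le Ce^{-\alpha t}$, $\alpha<\lambda_1$, and the smallness of $\norm[\Lin]{u}$, a Gronwall argument for $e^{\alpha t}\norm[\Lin]{w(\cdot,t)}$ gives $\norm[\Lin]{n(\cdot,t)-\nbar_0}\le Ce^{-\alpha t}$. For $u$, fix $\alpha'<\min\{\alpha,\lambda_1'\}$; from $u_t+Au=-\kappa\mathscr P((u\cdot\nabla)u)+\mathscr P((n-\nbar_0)\nabla\phi)$, the bound $\norm[L^2(\O)]{\mathscr P((n-\nbar_0)\nabla\phi)}\le\norm[\Lin]{\nabla\phi}\,\norm[L^2(\O)]{n-\nbar_0}\le Ce^{-\alpha t}$, the decay estimate \eqref{fraction} for $e^{-tA}$ (with exponent chosen close to $\lambda_1'$), and the convolution bound $\int_{T_0}^t e^{-\lambda_1'(t-s)}e^{-\alpha s}\,ds\le Ce^{-\alpha' t}$ give $\norm[L^2(\O)]{u(\cdot,t)}\le Ce^{-\alpha' t}$, and a further bootstrap in the fractional‑power spaces yields $\norm[L^2(\O)]{A^\alpha u(\cdot,t)}\le Ce^{-\alpha' t}$ and hence $\norm[\Lin]{u(\cdot,t)}\le Ce^{-\alpha' t}$; the Navier–Stokes nonlinearity occurs only in case (i) ($N=2$) and is absorbed by the smallness of $u$ via the familiar small‑data estimates. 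The main obstacle is the circular dependence among the three rates combined with the demand for the \emph{optimal} constants $\nbar_0,\lambda_1,\lambda_1'$; it is overcome precisely by first securing the rate‑free smallness for $t\ge T_0$ and then isolating in each equation the leading linear structure — the mean projection for $c$, the mean‑zero heat semigroup for $n$, the Stokes spectral gap for $u$ — so that all transport and Navier–Stokes nonlinearities enter only as genuinely small perturbations.
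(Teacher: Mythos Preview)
Your argument is correct in outline and reaches the stated conclusion, but it takes a genuinely different route from the paper. The paper's proof is a two-line reduction: having established the rate-free convergences $\norm[\Lin]{n-\nbar_0}\to0$, $\norm[\Lin]{\nabla c}\to0$, $\norm[L^2(\O)]{u}\to0$ (its Lemmata~\ref{lem:con:n}, \ref{lem:con:nablac}, \ref{lem:con:u}), it simply picks $T$ so large that $(n(\cdot,T),c(\cdot,T),u(\cdot,T))$ falls under the small-data hypothesis of \cite[Theorem~1]{cao_lan}, and then invokes that theorem verbatim (together with uniqueness) to extract the exponential rates. All the work of identifying the optimal constants $\nbar_0,\lambda_1,\lambda_1'$ is outsourced to \cite{cao_lan}.

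You instead reprove that small-data decay from scratch, and your mechanism for isolating the two competing rates in the $c$-equation is different in spirit: rather than running semigroup estimates directly on $c$, you split $c=\bar c+\tilde c$ and reduce to a cooperative $2\times2$ differential inequality whose spectral radius tends to $-\min\{\nbar_0,\lambda_1\}$ as the coupling $\rho\to0$. This is a clean and transparent way to see \emph{why} the rate is exactly $\min\{\nbar_0,\lambda_1\}$, which the black-box citation hides. Your subsequent Duhamel bootstraps for $n$ and $u$ are the standard ones (and are essentially what \cite{cao_lan} does internally), so the net effect is that your proof is self-contained but considerably longer, whereas the paper's is short but non-self-contained. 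One small point worth tightening in your write-up: in the $n$-step you absorb the transport term $\nabla\cdot(uw)$ via smallness of $\norm[\Lin]{u}$, which requires $q_0>N$ so that the singular kernel $(t-s)^{-\frac12-\frac{N}{2q_0}}$ is integrable; you should state this choice explicitly, since the corollary's $q_0$ is otherwise unspecified.
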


We note that compared with the result in \cite{win_arma}, Theorem \ref{th1} furthermore has restrictions on the size of initial data (\ref{c0}), which seems necessary for the existence of classical solutions. As a subcase of (\ref{eq}), results on the corresponding fluid-free version are not yet rich: Without assuming small data, the global generalized solutions constructed in \cite{win_2drot} still possibly become unbounded in the intermediate time; Only additionally assuming $\|c_0\|_{\Lin}$ small, global classical solutions are known to exist and blow-up is entirely ruled out \cite{li_xue_suen_win}. When the system is coupled with fluid component, our results give the same condition which guarantee the global existence of smooth solution.\\

The plan of the paper is as follows:

In Section 2, we approximate the problem by a well-posed system (see (\ref{ep}) later). Section 3-5 are devoted to study the boundedness of regularized problem, we will see the bounds are independent of the way we regularize the problem. Thus upon appropriate estimates, we can let $\ep\to 0$ to obtain limit functions of the regularized solutions. This procedure is done in Section 6, and also these limit functions are shown to be smooth enough and solves (\ref{eq}) classically for any positive time. In Section 7, we prove stabilization of the solution by applying the result from \cite{cao_lan}.

\section{ Approximation}
Since it is convenient to deal with the Neumann boundary conditions for both $n$ and $c$, we follow the same approximation procedure as in \cite{li_xue_suen_win}. Let $\ep\in(0,1)$, we find a family of functions $\{\rho_\ep\}_{\ep\in(0,1)}$ satisfying
\begin{align}\label{2:roh_ep}
\rho_\ep\in C_0^\infty(\Om)\;\; \text{ with }\;\;0\le\rho_\ep\le1 \text{ in } \Om \;\;\text{and} \;\;\rho_\ep\nearrow 1\text{ in } \Om \text{ as } \ep\searrow 0,
\end{align}
and define
\begin{align}\label{2:Sep}
S_\ep(x,\nep,\cep)=\rho_\ep(x)S(x,n,c),\;\; x\in\bar{\Om}.
\end{align}
Then we have $S_\ep(x,n,c)=0$ on $\pa\O$ and
\begin{align}\label{2:Sepnorm}
|S_\ep(x,\nep,\cep)|\le S_0(\norm[L^\infty(\Om)]{c_0})\;\; \text{ for all } x\in\Om,\,\nep>0,\,\cep>0.
\end{align}
Now we consider the following regularized problem
\begin{equation}\label{ep}
\left\{
\begin{array}{llc}
n_{\ep t}=\Delta \nep-\nabla\cdot(\nep S_\ep(x,\nep,\cep)\cdot\nabla \cep)+\uep\cdot\nabla \nep,
&(x,t)\in \Omega\times (0,T),\\[4pt]
\displaystyle
c_{\ep t}=\Delta \cep-\nep\cep+\uep\cdot\nabla \cep , &(x,t)\in\Omega\times (0,T),\\[4pt]
\displaystyle
u_{\ep t}=\Delta \uep-\kappa(\uep\cdot\nabla)\uep+\nabla P_\ep+\nep\nabla\phi , \,\nabla\cdot\uep=0, &(x,t)\in\Omega\times (0,T),\\[4pt]
\displaystyle
\nabla \nep\cdot\nu=\nabla \cep\cdot\nu=0,\;\uep=0, &(x,t)\in\pa\O\times (0,T),\\[4pt]
\displaystyle
\nep(x,0)=n_0(x), \cep(x,0)=c_0(x), \uep(x,0)=u_0(x),
&x\in\O.
\end{array}
\right.
\end{equation}

Without essential difficulty, the above system is locally solvable in the classical sense by an adaption of well-established fixed point argument \cite[Lemma 2.1]{win_cpde}.
We give the following lemma without proof.

\begin{lem}\label{lem:localexistence}
Let $N\in\{2,3\}$, $\O\subset\mathbb{R}^N$ be a bounded domain with smooth boundary, and $\kappa\in \mathbb{R}$.
Assume initial data $(n_0,c_0,u_0)$ satisfy (\ref{1:initial1}) and (\ref{1:initial2}),
and $S$ fulfills (\ref{1:SC2}-\ref{1:S}).
Then there exist $T_{\max}\in(0,\infty]$ and
a unique classical solution $(\nep,\cep,\uep,P_\ep)$ to \eqref{ep} in $\O\times [0,T_{\max})$ with $\nep,\cep>0$.
Moreover, if $T_{\max}<\infty$, then
$$
  \|\nep(\cdot,t)\|_{L^\infty(\O)}+\|\cep(\cdot,t)\|_{W^{1,q}(\O)}+\|A^\alpha \uep(\cdot,t)\|_{L^2(\O)}\to\infty
  \quad \mbox{as } t\nearrow T_{max}.
$$
\end{lem}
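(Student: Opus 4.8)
The plan is to obtain the local classical solution by a contraction--mapping argument in a function space tailored to the semigroup smoothing, then to run a parabolic bootstrap to upgrade the regularity, and finally to invoke a standard continuation argument for the blow-up alternative; this is essentially the scheme carried out for the closely related system in \cite[Lemma 2.1]{win_cpde}, and the only genuinely new structural input is that $S_\ep$ is $C^2$ and bounded uniformly in $\ep$ by (\ref{2:Sepnorm}).

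First I would fix $R>0$ with $\|n_0\|_{L^\infty(\O)}+\|c_0\|_{W^{1,q}(\O)}+\|A^\alpha u_0\|_{L^2(\O)}\le R$ and, for $T\in(0,1)$ to be chosen, work in the closed ball of radius $2R$ inside $X_T:=C^0([0,T];L^\infty(\O))\times C^0([0,T];W^{1,q}(\O))\times C^0([0,T];D(A^\alpha))$. On that ball I would define the map $\Phi=(\Phi_1,\Phi_2,\Phi_3)$ through the Duhamel formulae
\begin{align*}
\Phi_1(n,c,u)(t)&=e^{t\Delta}n_0-\int_0^t e^{(t-s)\Delta}\nabla\cdot\big(n\,S_\ep(\cdot,n,c)\cdot\nabla c-n u\big)(s)\,ds,\\
\Phi_2(n,c,u)(t)&=e^{t\Delta}c_0+\int_0^t e^{(t-s)\Delta}\big(u\cdot\nabla c-n c\big)(s)\,ds,\\
\Phi_3(n,c,u)(t)&=e^{-tA}u_0-\int_0^t e^{-(t-s)A}\mathscr P\big(\kappa(u\cdot\nabla)u-n\nabla\phi\big)(s)\,ds,
\end{align*}
where $e^{t\Delta}$ is the Neumann heat semigroup and $\mathscr P$ the Helmholtz projection. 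Using the $L^p$--$L^q$ smoothing estimates for $e^{t\Delta}$ and $e^{-tA}$ (as collected in \cite[Lemma 2.3]{cao_lan}) together with (\ref{fraction}), the embeddings $D(A^\alpha)\hookrightarrow L^\infty(\O)$ (valid since $\alpha>\tfrac N4$) and $D(A^\alpha)\hookrightarrow W^{1,2}(\O)$ (since $\alpha>\tfrac12$ in both dimensions), and the Lipschitz dependence of $(n,c)\mapsto n\,S_\ep(\cdot,n,c)\cdot\nabla c$ on bounded sets, one checks that every integrand lies in a space against which the relevant semigroup acts with an integrable singularity --- the worst being the exponent $(t-s)^{-\frac12-\frac N{2q}}$ coming from $e^{(t-s)\Delta}\nabla\cdot(\,\cdot\,)$, which is integrable precisely because $q>N$. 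Hence $\Phi$ maps the ball into itself and, after shrinking $T=T(R)$, is a contraction, so the Banach fixed point theorem yields a unique mild solution $(\nep,\cep,\uep)$ on $[0,T]$, and $P_\ep$ is recovered from the third equation.

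Next I would perform the regularity bootstrap: feeding the mild solution back into the Duhamel formulae and invoking parabolic $L^p$--$L^q$ estimates and then Hölder estimates, using $s_{ij}\in C^2(\overline\O\times[0,\infty)\times[0,\infty))$ and $\phi\in W^{1,\infty}(\O)$, one successively gains Hölder continuity of $\nep,\cep,\uep$ and of their spatial gradients on $\O\times(0,T)$; interior and boundary Schauder theory applied to the now-linear equations for each component then gives $(\nep,\cep,\uep)\in C^{2,1}(\overline\O\times(0,T))$, so the triple solves (\ref{ep}) classically. For positivity I would regard the $\cep$-equation as a linear parabolic equation for $\cep$ with bounded coefficients and nonpositive zeroth-order coefficient $-\nep$, and apply the maximum principle and the Hopf lemma to get $\cep>0$ for $t>0$; writing the $\nep$-equation in non-divergence form shows it has no zeroth-order sink, so $\nep\ge0$, and $\nep>0$ for $t>0$ by the strong maximum principle (unless $n_0\equiv0$). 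Uniqueness on $[0,T]$ is built into the fixed point step, and uniqueness in the larger class follows as in \cite{win_cpde}.

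Finally, defining $T_{\max}$ as the supremum of existence times and patching the solutions via uniqueness yields a classical solution on $\O\times[0,T_{\max})$. If $T_{\max}<\infty$ while $\|\nep(\cdot,t)\|_{L^\infty(\O)}+\|\cep(\cdot,t)\|_{W^{1,q}(\O)}+\|A^\alpha\uep(\cdot,t)\|_{L^2(\O)}$ stayed bounded as $t\nearrow T_{\max}$, then rerunning the local existence step with $t_0<T_{\max}$ as initial time would produce an existence time $\tau>0$ depending only on that bound, hence independent of $t_0$, contradicting maximality; this gives the stated blow-up criterion. The main obstacle throughout is the bookkeeping that keeps all convolution kernels integrable: this is exactly what forces the hypotheses $c_0\in W^{1,q}(\O)$ with $q>N$ (to make sense of $\nabla\cdot(\nep S_\ep\cdot\nabla\cep)$ in $L^\infty$) and $u_0\in D(A^\alpha)$ with $\alpha>\tfrac N4$ (to control $\uep\cdot\nabla\nep$, $\uep\cdot\nabla\cep$ and $(\uep\cdot\nabla)\uep$ via $D(A^\alpha)\hookrightarrow L^\infty(\O)$); the matrix structure of $S$ creates no additional difficulty here, since $S_\ep$ is smooth and globally bounded uniformly in $\ep$ by (\ref{2:Sepnorm}).
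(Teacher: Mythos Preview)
Your proposal is correct and follows precisely the approach the paper itself points to: the paper does not supply its own proof of this lemma but explicitly defers to the fixed-point scheme of \cite[Lemma~2.1]{win_cpde}, remarking that the system is ``locally solvable in the classical sense by an adaptation of well-established fixed point argument'' and then stating the lemma ``without proof.'' Your sketch --- a contraction in $C^0([0,T];L^\infty(\O))\times C^0([0,T];W^{1,q}(\O))\times C^0([0,T];D(A^\alpha))$ via the Duhamel representations, followed by a parabolic bootstrap, the maximum principle for positivity, and the standard continuation argument for the blow-up alternative --- is exactly that adaptation, and your observation that the matrix structure of $S_\ep$ adds nothing new (being $C^2$ and bounded by (\ref{2:Sepnorm})) is the point the paper is implicitly making.
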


In order to see the global existence and qualitative behavior of the regularized problem, it is sufficient to show boundedness for each criterion in the above lemma. The following lemma is immediately obtained upon observation.

\begin{lem}\label{lem:nc}
  Let $(\nep,\cep,\uep,P_\ep)$ be a classical solution of \eqref{ep}. It follows that
  \begin{align}\label{2:mass}
  &\|\nep(\cdot,t)\|_{L^1(\O)}=\|n_0 \|_{L^1(\O)} ,\\
  \label{2:c-Lp}
  &\|\cep(\cdot,t)\|_{L^\infty(\O)}\le\|c_0\|_{L^\infty(\O)} \ \text { for all}\ t\in(0,T_{max}).
  \end{align}
\end{lem}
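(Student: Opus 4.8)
The plan is to verify both identities by direct manipulations on the regularized system \eqref{ep}, using the no-flux boundary conditions for $\nep$ and $\cep$, the incompressibility $\nabla\cd\uep=0$ together with the Dirichlet condition $\uep=0$ on $\pa\O$, and the positivity $\nep,\cep>0$ guaranteed by Lemma \ref{lem:localexistence}.

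For \eqref{2:mass} I would integrate the first equation of \eqref{ep} over $\Om$. The diffusion contribution $\iO\Delta\nep$ and the cross-diffusion contribution $\iO\nabla\cd(\nep S_\ep\cd\nabla\cep)$ both reduce to boundary integrals: the former vanishes because $\nabla\nep\cd\nu=0$ on $\pa\O$, and the latter vanishes because $S_\ep(x,\cdot,\cdot)=0$ on $\pa\O$ by \eqref{2:roh_ep}--\eqref{2:Sep} (equivalently, by the no-flux condition imposed in \eqref{ep}). For the transport term, integration by parts gives $\iO\uep\cd\nabla\nep=-\iO\nep\,\nabla\cd\uep+\int_{\pa\O}\nep\,\uep\cd\nu=0$, since $\nabla\cd\uep=0$ in $\Om$ and $\uep=0$ on $\pa\O$. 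Hence $\frac{d}{dt}\iO\nep=0$, and integrating in time yields $\|\nep(\cdot,t)\|_{L^1(\O)}=\|n_0\|_{L^1(\O)}$.

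For \eqref{2:c-Lp} I would argue by comparison. Put $M:=\|c_0\|_{\Lin}$, a constant, and set $z:=\cep-M$. From the second equation of \eqref{ep} and $\Delta M=0$, $\uep\cd\nabla M=0$ one obtains
\[
z_t-\Delta z-\uep\cd\nabla z=-\nep\cep\le 0\quad\text{in }\Om\times(0,T_{max}),\qquad \nabla z\cd\nu=0\ \text{on }\pa\O,\qquad z(\cdot,0)=c_0-M\le 0,
\]
where the right-hand side is nonpositive because $\nep>0$ and $\cep>0$. The parabolic maximum principle (applied on $\Om\times[0,T]$ for any $T<T_{max}$) then gives $z\le 0$, i.e.\ $\cep\le M$; combined with $\cep>0$ this is exactly \eqref{2:c-Lp}. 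Alternatively one may test the $\cep$-equation with $\cep^{p-1}$, discard the convective term using $\nabla\cd\uep=0$, drop the nonpositive reaction term, obtain a $p$-independent $L^p$ bound, and let $p\to\infty$; both routes are routine.

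There is no genuine obstacle here --- this is the ``immediately obtained upon observation'' step. The only points deserving a word of care are that the cross-diffusion boundary term vanishes precisely because the cut-off $\rho_\ep$ has been built into $S_\ep$ so that $S_\ep=0$ on $\pa\O$, and that the elimination of the fluid drift in both identities uses $\nabla\cd\uep=0$ in $\Om$ as well as $\uep=0$ on $\pa\O$.
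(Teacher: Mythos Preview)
Your proposal is correct and follows essentially the same route as the paper: integrate the first equation to obtain mass conservation, and apply the parabolic maximum principle (via comparison with the constant $\|c_0\|_{L^\infty(\O)}$) to the second equation for the $L^\infty$ bound on $\cep$. The paper's proof is a two-sentence sketch of precisely this argument; you have simply written out the boundary-term accounting and the comparison function explicitly.
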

\begin{proof}
The mass conservation \eqref{2:mass} is obtained by
integrating the first equation of \eqref{eq} on $\O$ and using the Neumann boundary condition.
Since $\nep$ and $\cep$ are nonnegative,
an application of the maximum principle to the second equation yields \eqref{2:c-Lp}.
\end{proof}
We then obtain boundedness and global existence for the regularized problem (\ref{ep}).
\begin{proposition}\label{epboundedness}
Let $N\in\{2,3\}$,  $\O\subset\mathbb{R}^N$ be a bounded domain with smooth boundary.
Assume that $S$ fulfills (\ref{1:SC2}-\ref{1:S}). Either of the following conditions holds
 \begin{enumerate}
 \item [(i)] $N=2$, $\kappa=1$;
 \item [(ii)]  $N=3$, $\kappa=0$.
 \end{enumerate}
Then there exists $\delta_0>0$ with the following property:
If the initial data fulfill (\ref{1:initial1}-\ref{1:initial2}), and
\bea{c0}
\|c_0\|_{\Lin}<\delta_0,
\eea
then (\ref{ep}) admits a global classical solution $(\nep,\cep,\uep)$. And there is $C>0$ such that
\begin{align}
&\norm[\Lin]{\nep(\cdot,t)}\le C,\;\;\norm[W^{1,q}(\O)]{\cep(\cdot,t)}\le C, \;\;\norm[L^2(\O)]{A^\alpha\uep(\cdot,t)}\le C
\end{align}
for all $t\in(0,\infty)$ and all $\ep\in(0,1)$.
\end{proposition}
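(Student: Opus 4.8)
The overall strategy is a bootstrap argument driven by the smallness of $\|c_0\|_{\Lin}$, exploiting that the chemotactic coupling in the first equation carries a factor $S_0(\|c_0\|_{\Lin})$ that can be made as small as we like. First I would set up, via a continuity/extension argument, a maximal time interval $(0,T)\subset(0,T_{\max})$ on which a collection of a priori bounds of the form
\begin{align}\label{prop:bootstrap}
\|\cep(\cdot,t)\|_{W^{1,q}(\O)}\le M_1,\qquad \|A^\alpha\uep(\cdot,t)\|_{L^2(\O)}\le M_2,\qquad \|\nep(\cdot,t)\|_{L^p(\O)}\le M_3
\end{align}
hold for suitable fixed constants $M_1,M_2,M_3$ and a suitable exponent $p>N/2$, and then show that $\delta_0$ can be chosen so small that on $(0,T)$ these bounds are in fact improved (say, each $M_i$ replaced by $M_i/2$), forcing $T=T_{\max}$ and, via the extensibility criterion in Lemma \ref{lem:localexistence}, $T_{\max}=\infty$. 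The mass identity \eqref{2:mass} and the $L^\infty$ bound \eqref{2:c-Lp} are available for free and form the base of the iteration.

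The key steps, in order, are as follows. (1) Smallness of $c$: from \eqref{2:c-Lp} and \eqref{2:Sepnorm} the effective chemotactic strength is bounded by $S_0(\delta_0)$, which tends to $0$ as $\delta_0\to0$ by monotonicity and continuity of $S_0$. (2) Gradient estimate for $c$: testing the $c$-equation and using smoothing estimates for the Neumann heat semigroup, together with $\|\nep c_\ep\|_{L^r}\le\delta_0\|\nep\|_{L^r}$, I would control $\|\nabla\cep(\cdot,t)\|_{L^q(\O)}$ in terms of $\|\nep\|_{L^p}$ with a prefactor linear in $\delta_0$ (this is where smallness enters the $c$-estimate). (3) Energy estimate for $u$: using the analytic-semigroup bound \eqref{fraction} and the $L^p$-$L^q$ estimates for $(e^{-tA})$, express $\|A^\alpha\uep(\cdot,t)\|_{L^2}$ via Duhamel in terms of $\int_0^t\|e^{-(t-s)A}\mathscr P(\nep\nabla\phi)\|\,ds$ (and, when $\kappa=1$, the convective term, handled by the two-dimensional structure in case (i)); since $\phi\in W^{1,\infty}$, this reduces to controlling $\|\nep\|_{L^{q_0}}$ for an appropriate $q_0$. (4) $L^p$ estimate for $n$: this is the crucial closing step — testing the $\nep$-equation against $\nep^{p-1}$, the chemotactic contribution is $(p-1)\int\nep^{p-1}S_\ep\nabla\cep\cdot\nabla\nep$, which after Young's inequality is absorbed into the good dissipation $\int\nep^{p-2}|\nabla\nep|^2$ up to a term $C\,S_0(\delta_0)^2\int\nep^{p}|\nabla\cep|^2$; feeding in the bound on $\|\nabla\cep\|_{L^q}$ from step (2), a Gagliardo–Nirenberg interpolation, and again the smallness $S_0(\delta_0)$, one obtains a differential inequality that, for $\delta_0$ small, yields a uniform-in-time bound on $\|\nep\|_{L^p(\O)}$. (5) Bootstrap to $L^\infty$: once $\|\nep\|_{L^p}$ with $p>N/2$ is bounded, standard Moser-type iteration (or semigroup estimates on the $\nep$-equation, treating $\nabla\cdot(\nep S_\ep\nabla\cep)$ as a controlled forcing) upgrades this to $\|\nep\|_{\Lin}\le C$, and then parabolic regularity gives back the full $W^{1,q}$ bound on $\cep$ and the $D(A^\alpha)$ bound on $\uep$, closing the loop and completing the proof via the blow-up criterion.

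The main obstacle I anticipate is step (4): closing the $L^p$ estimate for $\nep$ without any Lyapunov/gradient-like structure. The only leverage against the cross-diffusion term is the smallness of $S_0(\delta_0)$, so the whole argument is quantitative — one must carefully track the powers of $S_0(\delta_0)$ and of the constants $M_i$ through the Gagliardo–Nirenberg and Young inequalities to verify that a genuine self-improving inequality results rather than a circular one. In the case $N=3$ this forces the restriction $\kappa=0$ (Stokes), since the three-dimensional Navier–Stokes convective term would destroy the needed control on $\|A^\alpha\uep\|_{L^2}$; in the case $N=2$ the Navier–Stokes term is admissible because of the more favorable Sobolev embeddings. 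A secondary technical point is the interplay between the exponents $p,q,q_0,\alpha$: they must be chosen compatibly so that every interpolation inequality used has an exponent strictly below the critical one, leaving room for the smallness factor to win; I would fix these exponents once at the outset (e.g. $p$ slightly above $N/2$, $q>N$ as in \eqref{1:initial1}, $\alpha\in(N/4,1)$) and check feasibility of the resulting system of inequalities.
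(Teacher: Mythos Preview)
There is a genuine gap in your closing mechanism. Your step (1) asserts that $S_0(\delta_0)\to 0$ as $\delta_0\to 0$ ``by monotonicity and continuity of $S_0$'', and step (4) relies on this smallness to absorb the remainder $C\,S_0(\delta_0)^2\int \nep^{p}|\nabla\cep|^2$. But hypothesis \eqref{1:S} only says $S_0$ is non-decreasing; nothing forces $S_0(0)=0$. In the prototypical case $S\equiv \mathbb{I}$ one has $S_0\equiv 1$, so $S_0(\delta_0)=1$ no matter how small $\delta_0$ is. With $S_0(\delta_0)$ bounded away from zero, the coefficient in front of $\int \nep^p|\nabla\cep|^2$ carries no smallness, and your bootstrap---in which the bound $M_1$ on $\|\nabla\cep\|_{L^q}$ feeds into the $L^p$ estimate for $\nep$, which in turn feeds $M_2$ and then back into $M_1$---has no contraction factor to close on. (Your step (2) also overstates things: only the $\nep\cep$ contribution to $\nabla\cep$ carries a factor $\delta_0$; neither the initial term $\nabla e^{t\Delta}c_0$ nor the transport term $\uep\cdot\nabla\cep$ does.)

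The paper avoids this circularity entirely by replacing the plain test function $\nep^{p-1}$ with a \emph{weighted} one. In Lemma~\ref{lem:Lp} the functional $\int_\Omega \nep^{p}\varphi(\cep)$ with $\varphi(s)=(\delta_0-s)^{-h}$ is differentiated; the $\cep$-equation then contributes an additional \emph{good} term $-\int_\Omega \nep^p\varphi''(\cep)|\nabla\cep|^2$ because $\varphi''>0$. After Young's inequality the dangerous remainder has the form
\[
\int_\Omega \nep^{p}|\nabla\cep|^2\Bigl(C\,\tfrac{\varphi'^2}{\varphi}+C\,S_0(\delta_0)^2\varphi+C\,S_0(\delta_0)\varphi'\Bigr),
\]
and since $\varphi''/\varphi\sim(\delta_0-\cep)^{-2}$ while the three competing coefficients scale like $(\delta_0-\cep)^{-2}$, $(\delta_0-\cep)^{0}$ and $(\delta_0-\cep)^{-1}$ respectively, the good term dominates as soon as $h$ is small and $\delta_0 S_0(\delta_0)$, $\delta_0^2 S_0(\delta_0)^2$ are small (conditions \eqref{3:del1}--\eqref{3:del2}). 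The crucial point is that $\delta_0 S_0(\delta_0)\to 0$ \emph{does} hold for any non-decreasing $S_0$, whereas $S_0(\delta_0)\to 0$ need not. This yields a uniform $L^p$ bound on $\nep$ with no reference to $\nabla\cep$ or $\uep$; the remaining estimates for $\uep$, $\nabla\cep$ and finally $\nep\in L^\infty$ then follow linearly, in that order, with no bootstrap and no further smallness needed.
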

We will prove boundedness for the 2-dimensional and 3-dimensional cases in Section 4 and Section 5, respectively. However, the $L^p(\O)$ estimate for $\nep$ derived in the next section will be applied to both.

\section{A priori estimate for $\nep$}
In this section, we obtain boundedness of $\nep$ in $L^p(\Om)$ under the assumption that $\norm[L^\infty(\Om)]{c_0}$ is suitably small. The approach is based on the weighted estimate of $\intO \nep^p\varphi(\cep)$ with appropriate choice of $\varphi$ which has been developed in \cite{win_2012mathnachr} and adapted to the consumed type signal in \cite{tao_jmaa,win_arma}.

\begin{lem}\label{lem:Lp}
Let $p>1$, there is $\delta_0:=\delta_0(p)>0$ and $C>0$ have the property: If the initial data satisfy (\ref{1:initial1}-\ref{1:initial2}) and
\begin{align}\label{3:c0}
\|c_0\|_{L^\infty(\Om)}< \delta_0,
\end{align}
then for all $\ep\in(0,1)$, we have
\begin{align}\label{3:1}
&\norm[L^p(\Om)]{\nep(\cdot,t)}\le C\,\, \text{ for all } t\in(0,T_{\max}),\\
\label{3:2}
\text{ and } &\int_0^{T_{\max}}\into \nep^{p-2}|\nabla\nep|^2\le C.
\end{align}
\end{lem}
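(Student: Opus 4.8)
The plan is to test the first equation in (\ref{ep}) against $\nep^{p-1}\varphi(\cep)$ for a carefully chosen positive, non-increasing weight function $\varphi$, following the strategy of \cite{win_2012mathnachr,tao_jmaa,win_arma}. Differentiating $\frac{1}{p}\frac{d}{dt}\intO \nep^p\varphi(\cep)$ and substituting the evolution equations for $\nep$ and $\cep$ produces, after integration by parts (the boundary terms vanish thanks to the no-flux conditions and $S_\ep=0$ on $\pa\O$, and the transport terms $\uep\cdot\nabla(\cdot)$ drop out since $\nabla\cdot\uep=0$ and $\uep|_{\pa\O}=0$), an identity of the schematic form
\begin{align*}
\frac{1}{p}\frac{d}{dt}\intO \nep^p\varphi(\cep)
&= -(p-1)\intO \nep^{p-2}\varphi(\cep)|\nabla\nep|^2
 + (p-1)\intO \nep^{p-1}\varphi'(\cep)\nabla\nep\cdot(S_\ep\nabla\cep)\\
&\quad - \intO \nep^{p-1}\varphi'(\cep)\nabla\nep\cdot\nabla\cep
 + \intO \nep^p\varphi'(\cep)(\Delta\cep - \nep\cep)
 + \tfrac12\intO \nep^p\varphi''(\cep)|\nabla\cep|^2 .
\end{align*}
Integrating the $\nep^p\varphi'(\cep)\Delta\cep$ term by parts once more generates terms $-\intO\varphi'(\cep)\nabla(\nep^p)\cdot\nabla\cep$ and $-\intO\nep^p\varphi''(\cep)|\nabla\cep|^2$, so that overall one is left with a combination of a good dissipation term $-(p-1)\intO\nep^{p-2}\varphi|\nabla\nep|^2$, a "mixed" gradient term $\sim\intO\nep^{p-1}\varphi'\nabla\nep\cdot\nabla\cep$ (with and without the $S_\ep$ factor), and a term $\sim\intO\nep^p\varphi''|\nabla\cep|^2$. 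The key algebraic point is that, using $|S_\ep|\le S_0(\|c_0\|_{\Lin})$ and Young's inequality, the mixed term can be absorbed into the dissipation at the cost of a term $\lesssim \intO \nep^p\frac{(\varphi')^2}{\varphi}|\nabla\cep|^2$.

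Thus the whole right-hand side is controlled provided $\varphi$ is chosen so that $\varphi''$ and $(\varphi')^2/\varphi$ have the appropriate sign/size against the remaining structure; the natural candidate is $\varphi(s)=e^{-\beta s}$ or $\varphi(s)=(K-s)^{-\theta}$ type weights on the range $s\in[0,\|c_0\|_{\Lin}]$, where smallness of $\|c_0\|_{\Lin}$ makes the relevant coefficients small. One still has a leftover term of the form $C(p,\delta_0)\intO\nep^p|\nabla\cep|^2$ and a term $-\intO\nep^{p+1}\cep\varphi'(\cep)\ge 0$ (sign-favorable since $\varphi'\le 0$), and possibly a contribution $\intO\nep^p\varphi'(\cep)\cdot(-\nep\cep)$ which, again because $\varphi'\le0$, $\cep\ge0$, $\nep\ge0$, is nonpositive and can be discarded. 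The term $\intO\nep^p|\nabla\cep|^2$ is handled by first establishing, via testing the $\cep$-equation, a bound on $\int_0^{T_{\max}}\intO|\nabla\cep|^2$ or on $\|\nabla\cep\|$ in a suitable space — here one exploits that $c_t=\Delta c-nc-u\cdot\nabla c$ with $\nep\ge0$, $\cep\ge0$ gives strong decay estimates for $\cep$; a maximal-regularity or semigroup argument (or the elementary $\frac{d}{dt}\intO|\nabla\cep|^2$ identity combined with a pointwise bound $\cep\le\|c_0\|_{\Lin}$) yields $\intO\nep^p|\nabla\cep|^2\le \delta\intO\nep^{p-2}|\nabla\nep|^2 + (\text{l.o.t.})$ once more after an interpolation, again with a small constant when $\|c_0\|_{\Lin}$ is small. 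Combining everything produces a differential inequality
\begin{align*}
\frac{d}{dt}\intO \nep^p\varphi(\cep) + c_1\intO \nep^{p-2}|\nabla\nep|^2 + c_2\intO\nep^p\varphi(\cep)\le C,
\end{align*}
for some $c_1,c_2>0$ depending on $p$ and $\delta_0$ but not on $\ep$, from which (\ref{3:1}) follows by an ODE comparison (using $\varphi$ bounded above and below by positive constants on $[0,\delta_0]$) and (\ref{3:2}) follows by integrating in time.

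The main obstacle is the choice of the weight $\varphi$ and the simultaneous bookkeeping of all the coefficients: one must verify that a single $\varphi$ makes the $\varphi''|\nabla\cep|^2$-term and the $(\varphi')^2/\varphi\,|\nabla\cep|^2$-term either absorbable or controllable, that the residual $\intO\nep^p|\nabla\cep|^2$-contribution really does come with an arbitrarily small prefactor as $\delta_0\to0$, and that no term forces $p$ to be restricted (since later sections need $\nep\in L^p$ for arbitrarily large $p$). I expect the delicate interplay to be exactly the point where smallness of $\|c_0\|_{\Lin}$ is genuinely used: it is what replaces the cancellation that would have come from the gradient-like functional (\ref{ineq:functional}) in the scalar-sensitivity case, which is unavailable here because $\intO\nep^{p-1}\varphi'S_\ep\nabla\nep\cdot\nabla\cep$ is not a perfect part of a Lyapunov structure for matrix-valued $S$.
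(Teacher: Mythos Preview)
Your overall strategy --- differentiate $\intO\nep^p\varphi(\cep)$ with a well-chosen weight and absorb the mixed gradient terms --- is correct and is precisely what the paper does. But two concrete missteps would prevent your argument from closing.

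\textbf{The monotonicity of $\varphi$ is backwards, and the convexity is the real point.} You stipulate a \emph{non-increasing} weight ($\varphi'\le0$), and then propose to deal with the residual $\intO \nep^p|\nabla\cep|^2$ term by an external estimate on $\nabla\cep$ or by interpolation. This is where the argument breaks: there is no clean way to bound $\intO\nep^p|\nabla\cep|^2$ by $\intO\nep^{p-2}|\nabla\nep|^2$ plus lower-order terms, because $\nep^p$ is exactly the quantity you are trying to control. The paper instead takes $\varphi(s)=(\delta_0-s)^{-h}$ with $0<h<\tfrac{1}{48}$, which is \emph{increasing} and, crucially, \emph{convex}: $\varphi''(s)=h(h+1)(\delta_0-s)^{-h-2}>0$. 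After integration by parts the identity contains the term $-\intO\nep^p\varphi''(\cep)|\nabla\cep|^2$, which is now a genuine \emph{second} dissipation term. Every bad contribution of the form $\intO\nep^p(\cdots)|\nabla\cep|^2$ --- namely the pieces coming from $\tfrac{(\varphi')^2}{\varphi}$, from $S_0^2\varphi$, and from $S_0\varphi'$ --- is absorbed directly into $\varphi''$. The smallness of $\|c_0\|_{L^\infty(\Om)}$ enters exactly here: choosing $\delta_0$ so that $3p(p-1)\delta_0^2S_0^2(\delta_0)\le h(h+1)$ and $3p\delta_0 S_0(\delta_0)\le h+1$ makes $16\tfrac{(\varphi')^2}{\varphi}+p(p-1)S_0^2\varphi+pS_0\varphi'\le\varphi''$ pointwise. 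No separate estimate on $\nabla\cep$ is needed at all, and the outcome is the clean inequality
\[
\frac{d}{dt}\intO\nep^p\varphi(\cep)+\frac{p(p-1)}{2}\intO\nep^{p-2}\varphi(\cep)|\nabla\nep|^2\le 0.
\]

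\textbf{A sign error in the consumption term.} You write that $\intO\nep^p\varphi'(\cep)(-\nep\cep)$ is nonpositive when $\varphi'\le0$; in fact $(\ge0)(\le0)(\le0)\ge0$, so with your sign convention this term would be \emph{unfavorable}. In the paper's setting $\varphi'>0$, so $-\intO\nep^{p+1}\cep\varphi'(\cep)\le0$ and is simply dropped. This confirms that the increasing weight is the right choice.

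In short: keep your weighted-$L^p$ framework, but switch to the convex weight $(\delta_0-s)^{-h}$ and use $\varphi''$ as the absorber rather than trying to estimate $\intO\nep^p|\nabla\cep|^2$ from outside.
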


\begin{remark}
The argument does not depend on dimension $N$ or the value of $\kappa$.
\end{remark}
\begin{proof}
Let $p>1$, $0<h<\frac1{48}$. We can find
$\delta_0$ satisfying
\begin{align}\label{3:del1}
& 3p(p-1)\delta_0^2 S_0^2(\delta_0)\le h(h+1),\\\label{3:del2}
& 3p \delta_0 S_0(\delta_0)\le h+1,
\end{align}
where $S_0$ is non-decreasing function as introduced in \eqref{1:S}. Under the assumption of (\ref{3:c0}), we can define $\varphi(\cep)=(\delta_0-\cep)^{-h}$ according to \eqref{2:c-Lp}, thus $\varphi(\cep)>0$. Elementary calculus shows that
\begin{align}\label{phi'}
&\varphi'(\cep)=h(\delta_0-\cep)^{-h-1}>0,\\\label{phi''}
&\varphi''(\cep)= h(h+1)(\delta_0-\cep)^{-h-2}>0.
\end{align}
Using the first two equations in (\ref{ep}), upon integrating by part we obtain
\begin{align}\nn
&\frac{d}{dt}\intO \nep^p\varphi(\cep)\\\nn
&=\intO p\nep^{p-1}\varphi(\cep)(\Delta \nep-\nabla\cdot(\nep S_\ep\cdot\nabla \cep)-\uep\cdot\nabla \nep)+\intO \nep^p\varphi'(\cep)(\Delta \cep-\nep\cep-\uep\cdot\nabla \cep)\\\nn
&=-\intO\nabla\nep\cdot(p(p-1)\nep^{p-2}\varphi(\cep)\nabla \nep+p\nep^{p-1}\varphi'(\cep)\nabla \cep)\\\nn
&~~~~~+\intO \nep S_\ep(x,\nep,\cep)\cdot\nabla \cep\cdot\left(p(p-1)\varphi(\cep)\nep^{p-2}\nabla\nep+p\nep^{p-1}\varphi'(\cep)\nabla\cep\right)\\\nn
&~~~~~~-\intO p\nep^{p-1} \varphi(\cep)\uep\cdot\nabla\nep-\intO \nabla\cep\cdot(p\nep^{p-1}\varphi'(\cep)\nabla\nep+\nep^p\varphi''(\cep)\nabla\cep)\\\nn
&~~~~~~~~-\intO \nep^p\varphi'(\cep)\uep\cdot\nabla\cep-\intO \nep^{p+1}\cep\varphi'(\cep)\\\nn
&=-p(p-1)\into \nep^{p-2}\varphi(\cep)|\nabla\nep|^2-p\into\nep^{p-1}\varphi'(\cep)\nabla\nep\cdot\nabla\cep\\\nn
&~~~~
+p(p-1)\into\nep^{p-1}\varphi(\cep)S_\ep(x,\nep,\cep)\cdot\nabla\cep\cdot\nabla\nep
+p\into\nep^p\varphi'(\cep)S_\ep(x,\nep,\cep)\cdot\nabla\cep\cdot\nabla\cep
\\\label{3:1.1}
&~~~~~-p\into\nep^{p-1}\varphi'(\cep)\nabla\nep\cdot\nabla\cep-\into\nep^p\varphi''(\cep)|\nabla\cep|^2
-\into\nep^{p+1}\varphi'(\cep)c
\end{align}
for all $t\in(0,T_{\max})$,
where we have used the identity
\begin{align}\nn
-p\intO\nep^{p-1}\varphi(\cep)\uep\cdot\nabla\nep-\intO\nep^p\varphi'(\cep)\uep\cdot\nabla\cep
&=-\intO\varphi(\cep)\uep\cdot\nabla \nep^p-\intO\nep^p\uep\cdot\nabla\varphi(\cep)\\\nn
&=\intO \nep^p\varphi(\cep)(\nabla\cdot\uep)=0.
\end{align}
In light of (\ref{2:Sepnorm}), we find that
\begin{align}\nn
&\frac{d}{dt}\intO \nep^p\varphi(\cep)+p(p-1)\intO\varphi(\cep)\nep^{p-2}|\nabla\nep|^2
+\intO\nep^p\varphi''(\cep)|\nabla\cep|^2\\\nn
&=p(p-1)S_0(\norm[\Lin]{c_0})\intO\nep^{p-1}\varphi(\cep)|\nabla\nep||\nabla\cep|+2p\intO\nep^{p-1} \varphi'(\cep)|\nabla\nep||\nabla\cep|\\\label{3:1.2}
&~~~~~~~~~~~~~~~~~~
+pS_0(\norm[\Lin]{c_0})\intO\nep^p\varphi'(\cep)|\nabla\cep|^2
\end{align}
for all $t\in(0,T_{\max})$.
Here Young's inequality yields that
\begin{align}\nn
p(p-1)S_0(\norm[\Lin]{c_0})\intO \nep^{p-1}\varphi(\cep)|\nabla\nep||\nabla\cep|&\le\frac{p(p-1)}{4} \intO\nep^{p-2}\varphi(\cep)|\nabla\nep|^2\\\label{3:1.4}
&~~~~~~~+p(p-1)S_0^2(\norm[\Lin]{c_0})\intO\nep^p\varphi(\cep)|\nabla\cep|^2,\\
\label{3:1.3}
2p\intO \nep^{p-1}\varphi'(\cep)|\nabla\nep||\nabla\cep|\le\frac{p(p-1)}{4}&\intO \nep^{p-2}\varphi(\cep)|\nabla \nep|^2+16\intO \nep^p\frac{\varphi'^2(\cep)}{\varphi(\cep)}|\nabla\cep|^2,
\end{align}
We see that (\ref{3:1.2}-\ref{3:1.4}) imply
\begin{align}\nn
&\frac{d}{dt}\intO \nep^p\varphi(\cep)+\frac{p(p-1)}{2}\intO\nep^{p-2}\varphi(\cep)|\nabla\nep|^2
\\\label{3:1.5}
&+\into\nep^p|\nabla\cep|^2\left(\varphi''(\cep)-16\frac{\varphi'^2(\cep)}{\varphi(\cep)}
-p(p-1)S_0^2(\norm[\Lin]{c_0})\varphi(\cep)-pS_0(\norm[\Lin]{c_0})\varphi'(\cep)\right)\le 0
\end{align}
for all $t\in(0,T_{\max})$. Now using (\ref{3:del1}-\ref{3:del2}), and in view of the fact that $S_0(\delta)$ is non-decreasing, we see that
\begin{align}\nn
&16\frac{\varphi'^2(\cep)}{\varphi(\cep)}=16h^2(\delta_0-\cep)^{-h-2}\le \frac13\varphi''(\cep),\\\nn
&p(p-1)S_0^2(\delta_0)\varphi(\cep)=p(p-1)S_0^2(\delta_0)(\delta_0-\cep)^{-h}\le \frac13\varphi''(\cep),\\\nn
&pS_0(\delta_0)\varphi'(\cep)=hpS_0(\delta_0)(\delta_0-\cep)^{-h-1}\le \frac13\varphi''(\cep).
\end{align}
Thus the term $\displaystyle\into\nep^p|\nabla\cep|^2\left(\varphi''(\cep)-16\frac{\varphi'^2(\cep)}{\varphi(\cep)}
-p(p-1)S_0^2(\delta_0)\varphi(\cep)-pS_0(\delta_0)\varphi'(\cep)\right)$ on the right hand side of \eqref{3:1.5} is nonnegative, we immediately deduce that
\begin{align}
\frac{d}{dt}\into\nep^p\varphi(\cep)+\frac{p(p-1)}{2}\into \nep^{p-2}\varphi(\cep)|\nabla\nep|^2\le0, \text{ for all } t\in(0,T_{\max}).
\end{align}
Since $\varphi(\cep)$ is bounded from above and below, (\ref{3:1}) and (\ref{3:2}) result from the above inequality upon integrating on $(0,T_{\max})$.
\end{proof}

\section{ Boundedness in two-dimensional case ($N=2$, $\kappa=1$)}

We expect that the $L^p(\Om)$ estimate obtained in the last section guarantees boundedness of $\nep$ in $L^\infty(\Om)$ as in the fluid-free system. However, the iteration procedure is much more delicate due to the appearance of the transport terms in the current case. Since the regularity of $\nabla\cep$ is crucial, which is also associated to the regularity of $\uep$, we will first get the suitable regularity of $\uep$. More precisely, the $L^2(\Om)$ norm of $\nabla\uep$ implies boundedness of $\norm[L^p(\Om)]{u(\cdot,t)}$ for any $p>1$. This is sufficient to prove boundedness of $\norm[L^\infty(\Om)]{\nabla\cep(\cdot,t)}$. 

\subsection{Boundedness of $\|\nabla \uep(\cdot,t)\|_{L^2(\Om)}$}

\begin{lem}\label{lem:4.1}
Let $N\in\{2,3\}$.
Suppose that
\begin{align}\label{4:1}\mathop{\sup}\limits_{t\in(0,T_{\max})}
\norm[L^2(\O)]{\nep(\cdot,t)}<\infty.\end{align}
Then there exits $C>0$ such that for any $\ep>0$
\begin{align}\label{2.1.0}
&\|\uep(\cdot,t)\|_{L^2(\Om)}<C\;\;\text{ for all } t\in(0,T_{\max}),\\\label{2.1.0'}
&\int_k^{\min\{k+1,T_{\max}\}}\into |\nabla \uep|^2<C\;\;\text{ for all } k\in\mathbb{T}:=\{ s\in\mathbb{N} , s\le [T_{\max}]\}.
\end{align}
\end{lem}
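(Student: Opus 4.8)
The plan is to obtain both estimates from a single $L^2$ energy identity for the fluid component, produced by testing the Navier--Stokes equation in \eqref{ep} against $\uep$ itself. Multiplying $u_{\ep t}=\Delta\uep-\kappa(\uep\cdot\nabla)\uep+\nabla P_\ep+\nep\nabla\phi$ by $\uep$ and integrating over $\O$, the pressure contribution drops out since $\intO\nabla P_\ep\cdot\uep=-\intO P_\ep\,(\nabla\cdot\uep)=0$, and the convective term vanishes as well because, using $\nabla\cdot\uep=0$ together with the boundary condition $\uep=0$ on $\pa\O$,
\[
\kappa\intO(\uep\cdot\nabla)\uep\cdot\uep=\tfrac{\kappa}{2}\intO\uep\cdot\nabla|\uep|^2=-\tfrac{\kappa}{2}\intO(\nabla\cdot\uep)\,|\uep|^2=0 ,
\]
so this step is insensitive to the value of $\kappa$ (and to the dimension). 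What remains is
\[
\tfrac12\tfrac{d}{dt}\intO|\uep|^2+\intO|\nabla\uep|^2=\intO\nep\,\nabla\phi\cdot\uep \qquad\text{for all } t\in(0,T_{\max}).
\]

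Next I would estimate the forcing term. Since $\phi\in W^{1,\infty}(\O)$ and, by hypothesis \eqref{4:1}, $M:=\sup_{t\in(0,T_{\max})}\norm[L^2(\O)]{\nep(\cdot,t)}<\infty$, the Cauchy--Schwarz inequality gives $\intO\nep\,\nabla\phi\cdot\uep\le\norm[L^\infty(\O)]{\nabla\phi}\,M\,\norm[L^2(\O)]{\uep}$. Because $\uep(\cdot,t)\in W_0^{1,2}(\O)$, I can invoke the Poincaré inequality $\norm[L^2(\O)]{\uep}\le C_P\norm[L^2(\O)]{\nabla\uep}$ and then Young's inequality to bound the right-hand side by $\tfrac12\intO|\nabla\uep|^2+C_1M^2$ with $C_1:=\tfrac12\bigl(C_P\norm[L^\infty(\O)]{\nabla\phi}\bigr)^2$, which leaves the differential inequality
\[
\tfrac{d}{dt}\intO|\uep|^2+\intO|\nabla\uep|^2\le 2C_1M^2 \qquad\text{for all } t\in(0,T_{\max}).
\]

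To deduce \eqref{2.1.0} I would replace the dissipation using $\intO|\nabla\uep|^2\ge\lambda_1'\intO|\uep|^2$ and apply a standard Grönwall/ODE comparison, obtaining $\intO|\uep(\cdot,t)|^2\le\max\bigl\{\intO|u_0|^2,\,2C_1M^2/\lambda_1'\bigr\}=:C_2$ for all $t\in(0,T_{\max})$ and all $\ep\in(0,1)$. For \eqref{2.1.0'} I would instead retain the dissipation and integrate the displayed differential inequality over $(k,\min\{k+1,T_{\max}\})$ for $k\in\mathbb{T}$, which gives
\[
\int_k^{\min\{k+1,T_{\max}\}}\intO|\nabla\uep|^2\le\intO|\uep(\cdot,k)|^2+2C_1M^2\le C_2+2C_1M^2 ,
\]
a bound independent of $k$ and $\ep$. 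I do not expect any genuine obstacle in this lemma: the essential structural fact is that in the $L^2$ testing both the pressure gradient and the convective term are invisible, so the fluid equation behaves like an inhomogeneous heat equation whose source is controlled linearly by $\norm[L^2(\O)]{\nep}$. The only points requiring a little care are the vanishing of the convective term (which relies on $\uep|_{\pa\O}=0$ and $\nabla\cdot\uep=0$) and the rigorous justification of the energy identity, both of which are immediate from the classical regularity of $(\nep,\cep,\uep,P_\ep)$ asserted in Lemma \ref{lem:localexistence}.
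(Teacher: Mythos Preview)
Your proof is correct and follows essentially the same approach as the paper: test the fluid equation with $\uep$, use that the pressure and convective terms vanish, control the forcing term via Young and Poincar\'e, then apply an ODE comparison for \eqref{2.1.0} and integrate in time for \eqref{2.1.0'}. The only cosmetic difference is the order in which Young's inequality and the Poincar\'e inequality are applied to the right-hand side; the paper absorbs the term $\tfrac{\lambda_1'}{2}\intO|\uep|^2$ directly, whereas you first convert $\norm[L^2(\O)]{\uep}$ into $\norm[L^2(\O)]{\nabla\uep}$ and absorb into the dissipation.
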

\begin{proof}
Testing the third equation with $\uep$, integrating by parts and Young's inequality yield that
\begin{align}\nn
\frac12\frac{d}{dt}\into |\uep|^2+\into|\nabla \uep|^2&=\into \nep\nabla\phi\cdot \uep\\\label{2.1.1}
&\le \frac{\lambda_1'}{2}\into|\uep|^2
+\frac{1}{2\lambda_1'}\|\nabla\phi\|_{L^\infty(\Om)}^2\into \nep^2\end{align}
for all $t\in(0,T_{\max})$.
The Poincar\'e inequality combined with (\ref{4:1}) implies the existence of $c_1>0$ such that
\begin{align}
\frac{d}{dt}\into |\uep|^2+\lambda_1'\into |\uep|^2\le c_1
\end{align}
for all $t\in(0,T_{\max})$. Thus (\ref{2.1.0}) is obtained by the comparison theorem. Now we integrate (\ref{2.1.1})
on $(k,k+1)$ ($k\in\mathbb{T}$) to find that
(\ref{2.1.0'}) holds due to (\ref{2.1.0}).
\end{proof}
\begin{remark}
We note that the lemma does not depends on the dimensions, thus we are able to use the same reasoning in other situations, e.g. Lemma \ref{lem:con:u}.
\end{remark}

Base on (4.17) in \cite{win_cpde}, we can prove $\|\nabla \uep(\cdot,t)\|_{L^2(\Om)}$ is bounded with the aid of (\ref{2.1.0'}). The assumption $N=2$ is crucial
here.
\begin{lem}\label{lem:nablau}
Let $N=2$. Suppose that
\begin{align}
\mathop{\sup}\limits_{t\in(0,T_{\max})}
\norm[L^2(\O)]{\nep(\cdot,t)}<\infty.
\end{align}
There is $C>0$ fulfilling for any $\ep>0$
\begin{align}
\|\nabla \uep(\cdot,t)\|_{L^2(\Om)}\le C\,\,\text{ for all } t\in(0,T_{\max}).
\end{align}
\end{lem}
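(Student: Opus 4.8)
The plan is to perform a standard energy estimate on the Stokes/Navier–Stokes equation at the level of $\nabla \uep$, i.e.\ test the third equation in \eqref{ep} against $A\uep = -\mathscr{P}\Delta\uep$, obtaining
\[
\frac12\frac{d}{dt}\into|\nabla\uep|^2 + \into|A\uep|^2 = -\kappa\into(\uep\cdot\nabla)\uep\cdot A\uep + \into \nep\nabla\phi\cdot A\uep,
\]
after noting that the pressure gradient drops out under the Helmholtz projection. The last term is harmless: by Young's inequality it is bounded by $\frac14\|A\uep\|_{L^2}^2 + C\|\nep\|_{L^2}^2$, which is controlled by hypothesis. The crux is the convective term $-\kappa\into(\uep\cdot\nabla)\uep\cdot A\uep$ with $\kappa=1$, and this is precisely where $N=2$ enters.

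For the convective term I would follow the classical two-dimensional argument (this is exactly the content of (4.17) and its surroundings in \cite{win_cpde}). Using Hölder's inequality, $|\into(\uep\cdot\nabla)\uep\cdot A\uep| \le \|\uep\|_{L^4}\|\nabla\uep\|_{L^4}\|A\uep\|_{L^2}$. The key inputs are the two-dimensional Gagliardo–Nirenberg–Ladyzhenskaya inequalities $\|\uep\|_{L^4}^2 \le C\|\uep\|_{L^2}\|\nabla\uep\|_{L^2}$ and, more delicately, a bound of the form $\|\nabla\uep\|_{L^4}^2 \le C\|A\uep\|_{L^2}\|\nabla\uep\|_{L^2} + C\|\nabla\uep\|_{L^2}^2$ (valid in $N=2$, using that $D(A)\hookrightarrow W^{2,2}$ and elliptic regularity for the Stokes operator under Dirichlet conditions). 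Substituting these and using the uniform bound $\|\uep\|_{L^2}\le C$ from Lemma \ref{lem:4.1}, the convective term is estimated by
\[
C\|\uep\|_{L^2}^{1/2}\|\nabla\uep\|_{L^2}\|A\uep\|_{L^2}^{3/2} + (\text{lower order}),
\]
and then Young's inequality with exponents $(4,4/3)$ absorbs the $\|A\uep\|_{L^2}^{3/2}$ into $\frac14\|A\uep\|_{L^2}^2$ at the cost of a term $C\|\nabla\uep\|_{L^2}^4$. Altogether we arrive at a differential inequality of the shape
\[
\frac{d}{dt}y(t) + \into|A\uep|^2 \le C\,y(t)^2 + C, \qquad y(t):=\into|\nabla\uep|^2.
\]

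To upgrade this superlinear inequality into a global-in-time bound on $y$, I would invoke the space-time bound \eqref{2.1.0'}: $\int_k^{k+1}\into|\nabla\uep|^2 \le C$ uniformly in $k\in\mathbb{T}$. Since $\int_k^{k+1}\into|A\uep|^2$ also dominates (via Poincaré for $A$, i.e.\ $\lambda_1'\into|\nabla\uep|^2 \le \into|A\uep|^2$, or simply via $\|\nabla\uep\|_{L^2}^2 \le \|A\uep\|_{L^2}\|\uep\|_{L^2}$) something comparable, a Gronwall-type lemma for ODEs with the ``uniformly integrable forcing on unit intervals'' structure — exactly the device used repeatedly in \cite{win_cpde,win_arma} — yields $y(t)\le C$ for all $t\in(0,T_{\max})$ with $C$ independent of $\ep$ and of $T_{\max}$. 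One must also handle the initial slice $t\in(0,1)$, where $\|\nabla u_0\|_{L^2}$ is finite because $u_0\in D(A^\alpha)$ with $\alpha>N/4\ge 1/2$, so $\nabla u_0\in L^2(\O)$. The main obstacle is making the two-dimensional interpolation estimate for $\|\nabla\uep\|_{L^4}$ precise enough that the resulting power of $\|A\uep\|_{L^2}$ stays strictly below $2$, so that absorption works; this is genuinely two-dimensional and is the reason the lemma fails (and a different route is needed) when $N=3$.
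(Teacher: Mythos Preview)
Your proposal is correct and matches the paper's proof essentially step for step: test against $A\uep$, control the convective term via the two-dimensional Ladyzhenskaya interpolation $\|\nabla\uep\|_{L^4}^2\le C\|A\uep\|_{L^2}\|\nabla\uep\|_{L^2}$, arrive at $y'\le Cy^2$ with $y=\|\nabla\uep\|_{L^2}^2+1$, and close via the Gronwall-on-unit-intervals trick using \eqref{2.1.0'}. The only cosmetic differences are that the paper applies Young's inequality before the interpolation rather than after, and spells out the Gronwall step explicitly as $y(t)\le y(s_k)\exp\bigl(c_1\int_{s_k}^t y\bigr)$ with $s_k\in[k,k+1]$ chosen so that $y(s_k)\le c_2$.
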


\begin{proof}
First we apply Lemma \ref{lem:4.1} to obtain (\ref{2.1.0}) and (\ref{2.1.0'}).
Let $A=-\mathscr{P}\Delta$ and hence $\|A^\frac12 \uep\|_{L^2(\Om)}=\|\nabla \uep\|_{L^2(\Om)}$. Testing the third equation by $A\uep$ implies
\begin{align}\nn
\frac12\frac{d}{dt}\into|A^\frac12 \uep|^2+\into |A\uep|^2&=\into A\uep(\uep\cdot\nabla)\uep+\into \nep\nabla\phi A\uep\\\nn
&\le \frac14\into |A\uep|^2+\into |\uep|^2|\nabla \uep|^2+\frac14\into|A\uep|^2+\norm[\Lin]{\nabla\phi}^2\into \nep^2\\\label{2.2.11}
&\le \into |\uep|^2|\nabla \uep|^2+\frac12\into |A\uep|^2+\norm[\Lin]{\nabla\phi}^2\into \nep^2
\end{align}
for all $t\in(0,T_{\max})$. By Young's inequality, an interpolation inequality for $\norm[L^4(\O)]{\uep}$ and
$\norm[L^4(\Om)]{\nabla \uep}$ ({ see also in \cite[Proof of Theorem 1.1]{win_cpde} }), and the equivalence between the norms $\norm[L^2(\O)]{A(\cdot)}$ and $\norm[W^{2,2}(\O)]{\cdot}$
\begin{align}\nn
\into|\uep|^2|\nabla \uep|^2
&\le (\into |\uep|^4)^\frac12(\into|\nabla \uep|^4)^\frac12\\\nn
&\le (\into|\nabla \uep|^2)^\frac 12(\into |\uep|^2)^\frac12(\into|A\uep|^2)^\frac12(\into |\nabla \uep|^2)^\frac12\\\label{2.2.12}
&\le \frac12\into|A\uep|^2 +\frac{1}2(\into|\uep|^2)(\into|\nabla \uep|^2)^2.
\end{align}
We see that (\ref{2.2.11}) and (\ref{2.2.12}) in conjunction with our assumption and (\ref{2.1.0}) imply that there is $c_1>0$ fulfilling
\begin{align}
\frac{d}{dt}\into|\nabla \uep|^2+\into |A\uep|^2\le c_1\left(\into |\nabla \uep|^2+1\right)^2
\end{align}
for all $t\in(0,T_{\max})$.
Let $y(t):=\into|\nabla \uep(\cdot,t)|^2+1$, thus $y(t)$ satisfies
\begin{align}\label{2.2.2}
y'(t)\le c_1y^2(t)
\end{align}
for all $t\in[k,\min\{k+1,T_{\max}\})$.

If $T_{\max}>1$, for all $k\in\mathbb{T}$, Lemma \ref{lem:4.1} warrants the existences of $c_2>0$ and $s_k\in[k,k+1]$ such that
\begin{align}\label{2.2.3}
y(s_k)\le c_2,\,\,\int_k^{k+1}y(s)ds\le c_2.
\end{align}
We deduce from (\ref{2.2.2}-\ref{2.2.3}) that
\begin{align}\label{2.2.1}
y(t)\le e^{c_1\int_{s_k}^ty(s)ds}y(s_k)\le e^{c_1\int_{k}^{\min\{k+2,T_{\max}\}}y(s)ds}y(s_k)\le e^{2c_1c_2}c_2
\end{align}
for all $t\in[k+1,\min\{k+2,T_{\max}\}]\subset [s_k,\min\{k+2,T_{\max}\})$ ($k\in\mathbb{T}$). Thus (\ref{2.2.1}) holds for all $t\in[1,T_{\max})$.
A similar reasoning gives
\begin{align}y(t)\le e^{c_1\int_0^1y(s)ds}y(0)\le e^{c_1c_2}y(0) \mbox{ for all }t\in[0,1].
\end{align}
If $T_{\max}<1$, it is easy to see the above estimate still holds for $t\in[0,T_{\max})$.
Thus the proof is complete by letting $C:=\max\{e^{2c_1c_2}c_2,e^{c_1c_2}\|\nabla u_0\|_{L^2(\Om)}\}$.
\end{proof}

The following lemma is an immediate consequence from Sobolev embedding theorem for dimension 2.
\begin{lem}
Let $N=2$. Suppose that
\begin{align}
\mathop{\sup}\limits_{t\in(0,T_{\max})}
\norm[L^2(\O)]{\nep(\cdot,t)}<\infty.
\end{align}
Then for any $1<p<\infty$, there is $C>0$ such that for any $\ep>0$
\begin{align}\label{4:3}
\|\uep(\cdot,t)\|_{L^p(\Om)}\le C\,\,\text{ for all }t\in(0,T_{\max}).
\end{align}
\end{lem}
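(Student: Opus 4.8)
The plan is to exploit Lemma \ref{lem:nablau}, which gives a uniform bound on $\|\nabla\uep(\cdot,t)\|_{L^2(\O)}$ under the stated hypothesis, together with the fact that $\uep$ vanishes on $\pa\O$, so that $\uep(\cdot,t)\in W_0^{1,2}(\O)$ uniformly in $t$ and $\ep$. In two space dimensions the Sobolev embedding $W^{1,2}(\O)\hookrightarrow L^p(\O)$ holds for every finite $p$ (indeed $W_0^{1,2}(\O)\hookrightarrow L^p(\O)$ for all $1\le p<\infty$, with embedding constant depending only on $p$ and $\O$). Hence the claimed bound \eqref{4:3} follows immediately by composing these two facts.

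Concretely, I would proceed as follows. First, invoke Lemma \ref{lem:4.1} and Lemma \ref{lem:nablau}: under the assumption $\sup_{t\in(0,T_{\max})}\|\nep(\cdot,t)\|_{L^2(\O)}<\infty$, there is $c_1>0$ with $\|\nabla\uep(\cdot,t)\|_{L^2(\O)}\le c_1$ for all $t\in(0,T_{\max})$ and all $\ep\in(0,1)$; combining with \eqref{2.1.0} we also have $\|\uep(\cdot,t)\|_{L^2(\O)}\le c_1$, so $\|\uep(\cdot,t)\|_{W^{1,2}(\O)}\le c_2$ for a suitable $c_2>0$. Second, fix $p\in(1,\infty)$ and apply the Sobolev embedding theorem in dimension $N=2$: there is $c_3=c_3(p,\O)>0$ such that $\|v\|_{L^p(\O)}\le c_3\|v\|_{W^{1,2}(\O)}$ for every $v\in W^{1,2}(\O)$. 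Applying this with $v=\uep(\cdot,t)$ and using the bound from the first step yields $\|\uep(\cdot,t)\|_{L^p(\O)}\le c_3c_2=:C$ for all $t\in(0,T_{\max})$ and all $\ep\in(0,1)$, which is exactly \eqref{4:3}.

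There is no real obstacle here — this is a routine consequence of the preceding work and the two-dimensional Sobolev embedding, which is precisely why the excerpt flags it as "an immediate consequence." The only point worth a word of care is that the embedding constant $c_3$ blows up as $p\to\infty$ (the borderline $W^{1,2}\not\hookrightarrow L^\infty$ in $N=2$), so one genuinely does need to keep $p$ finite; for each fixed $p$, however, the constant is finite and $\ep$-independent, so the statement as written is fine. If one wanted a slightly sharper route, the Gagliardo–Nirenberg inequality $\|v\|_{L^p(\O)}\le c\,\|\nabla v\|_{L^2(\O)}^{1-2/p}\|v\|_{L^2(\O)}^{2/p}+c\,\|v\|_{L^2(\O)}$ gives the same conclusion and makes the $\ep$-uniformity transparent, but for the purpose of \eqref{4:3} the plain Sobolev embedding suffices.
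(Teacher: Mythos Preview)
Your proposal is correct and is exactly the argument the paper has in mind: the paper does not even spell out a proof, stating only that the lemma ``is an immediate consequence from Sobolev embedding theorem for dimension 2,'' which amounts precisely to combining Lemma~\ref{lem:nablau} (and Lemma~\ref{lem:4.1}) with the embedding $W^{1,2}(\O)\hookrightarrow L^p(\O)$ for all finite $p$ when $N=2$. Your remark about the embedding constant blowing up as $p\to\infty$ is a nice clarification but not needed for the statement as written.
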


\subsection{Boundedness of $\|\nabla \cep(\cdot,t)\|_{L^\infty(\Om)}$}
%


Now we are in a position to get higher regularity of $\nabla\cep$, the approach is carried out by fixed-point argument involving $L^p$-$L^q$ estimates for semigroups combined with a typical integral estimate, which is borrowed from from \cite{W4,cao_lan}.
\begin{lem}\label{lem:integralestimates}
For all $\eta>0$ there is $C=C(\eta)>0$ such that for all $\alpha,\beta\in [0,1-\eta)$, and $\gamma,\delta\in \mathbb{R}$ satisfying $\frac1\eta\ge\gamma-\delta\ge\eta$, we have
\[
 \int_0^t (1+s^{-\alpha})(1+(t-s)^{-\beta}) e^{-\gamma s} e^{- \delta(t-s)} ds \leq C(\eta) e^{-\min\{\gamma,\delta\} t} (1+t^{\min\set{0, 1-\alpha-\beta}})\;\;\mbox{ for all }t>0.
\]
\end{lem}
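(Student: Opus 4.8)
The plan is to reduce the problem to a single-variable estimate by splitting the integral at the midpoint $t/2$, since the two singular factors $s^{-\alpha}$ and $(t-s)^{-\beta}$ are active on disjoint halves of the interval. Concretely, I would write $\int_0^t = \int_0^{t/2} + \int_{t/2}^t$ and treat each piece separately, exploiting that on $[0,t/2]$ we have $t-s\ge t/2$ so the factor $(1+(t-s)^{-\beta})$ is bounded by $1+(t/2)^{-\beta}$, and symmetrically on $[t/2,t]$ the factor $(1+s^{-\alpha})$ is bounded by $1+(t/2)^{-\alpha}$.

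For the first piece, after pulling out $1+(t/2)^{-\beta}$, I am left with $\int_0^{t/2}(1+s^{-\alpha})e^{-\gamma s}e^{-\delta(t-s)}\,ds$. Writing $e^{-\delta(t-s)}=e^{-\delta t}e^{\delta s}$ and combining exponentials gives $e^{-\delta t}\int_0^{t/2}(1+s^{-\alpha})e^{-(\gamma-\delta)s}\,ds$. Here the hypothesis $\gamma-\delta\ge\eta>0$ is what makes this manageable: the integral $\int_0^\infty (1+s^{-\alpha})e^{-(\gamma-\delta)s}\,ds$ converges (using $\alpha<1-\eta<1$) and, because $\gamma-\delta\le 1/\eta$, is bounded by a constant depending only on $\eta$ — one should check that $\int_0^\infty s^{-\alpha}e^{-\lambda s}\,ds = \lambda^{\alpha-1}\Gamma(1-\alpha)$ stays bounded for $\lambda\in[\eta,1/\eta]$ and $\alpha\in[0,1-\eta]$, which it does. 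So this piece is bounded by $C(\eta)(1+(t/2)^{-\beta})e^{-\delta t}$. The symmetric argument bounds the second piece by $C(\eta)(1+(t/2)^{-\alpha})e^{-\gamma t}$. Adding the two and using $e^{-\delta t},e^{-\gamma t}\le e^{-\min\{\gamma,\delta\}t}$ yields a bound of the form $C(\eta)e^{-\min\{\gamma,\delta\}t}(1+t^{-\alpha}+t^{-\beta})$.

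The remaining step is to absorb $1+t^{-\alpha}+t^{-\beta}$ into $1+t^{\min\{0,1-\alpha-\beta\}}$, which requires distinguishing small and large $t$. For $t\ge 1$ every negative power of $t$ is $\le 1$, so the factor is comparable to $1=1+t^{\min\{0,\cdots\}}$ there (absorbing constants into $C(\eta)$, also noting $e^{-\min\{\gamma,\delta\}t}$ contributes a harmless bounded factor). For $t\le 1$ one has $t^{-\alpha}\le t^{-\alpha-\beta}$ and $t^{-\beta}\le t^{-\alpha-\beta}$ when the exponents are nonnegative, while if $\alpha+\beta<1$ then $1+t^{\min\{0,1-\alpha-\beta\}}=2$ dominates any $t^{-\alpha},t^{-\beta}$ with $\alpha,\beta<1$... wait, this needs care: if $\alpha+\beta<1$ but, say, $\alpha$ is close to $1$, then $t^{-\alpha}$ blows up while $1-\alpha-\beta>0$ so the target is bounded — so this regime cannot occur, since $\alpha<1-\eta$ and $\alpha+\beta<1$ forces neither to be large simultaneously, but individually $\alpha$ could be $0.9$ with $\beta$ small, giving $\alpha+\beta<1$ yet $t^{-0.9}\to\infty$. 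I expect this to be the genuinely delicate bookkeeping point: one must realize the claimed RHS $1+t^{\min\{0,1-\alpha-\beta\}}$ is only an upper bound and the actual sharp behavior for small $t$ is $t^{-\max\{\alpha,\beta\}}$ when $\alpha+\beta<1$, which is indeed $\le$ a constant times... no. The resolution is that the statement as written should be read with the understanding that when $\alpha+\beta\ge 1$ we get the genuine singularity $t^{1-\alpha-\beta}\ge t^{-\max\{\alpha,\beta\}}$, and when $\alpha+\beta<1$ the integrand has a weaker singularity that is still integrable but the clean bound $1+t^{1-\alpha-\beta}$ is what one proves by instead \emph{not} splitting so crudely — or by bounding $t^{-\alpha}\le t^{-\alpha-\beta}\cdot t^\beta \le t^{-\alpha-\beta}$ only for $t\le 1$ which is false. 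I would therefore refine the first-piece estimate: instead of bounding $\int_0^{t/2}s^{-\alpha}e^{-(\gamma-\delta)s}ds$ by a constant, keep it as $\le \int_0^{t/2}s^{-\alpha}ds \cdot \sup e^{-(\gamma-\delta)s} \lesssim t^{1-\alpha}$ for small $t$, and combine with the $(t/2)^{-\beta}$ prefactor to get $t^{1-\alpha-\beta}$ directly; for large $t$ use the convergent-integral bound. Matching these two regimes at $t=1$ gives precisely $1+t^{\min\{0,1-\alpha-\beta\}}$, and this two-regime refinement, rather than the initial splitting, is the real content of the lemma.
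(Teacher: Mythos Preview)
The paper does not give its own proof of this lemma; it simply states the estimate and cites \cite{W4,cao_lan} for it. So there is nothing to compare against, and your outline is essentially how the result is established in those references. Two remarks, one cosmetic and one a genuine slip:

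\textbf{The ``symmetric argument'' is not symmetric.} After you substitute $u=t-s$ on $[t/2,t]$ and pull out $1+(t/2)^{-\alpha}$, what remains is
\[
e^{-\gamma t}\int_0^{t/2}(1+u^{-\beta})\,e^{(\gamma-\delta)u}\,du,
\]
and because $\gamma-\delta\ge\eta>0$ the exponential \emph{grows}, so this integral is not bounded by a constant and your claimed bound $C(\eta)(1+(t/2)^{-\alpha})e^{-\gamma t}$ is false as written. The clean fix is to factor $e^{-\delta t}$ out of the \emph{entire} original integral first (which is legitimate since $\min\{\gamma,\delta\}=\delta$), leaving $e^{-\mu s}$ with $\mu:=\gamma-\delta\in[\eta,1/\eta]$. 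On $[t/2,t]$ you then bound $e^{-\mu s}\le e^{-\mu t/2}$, and the surviving integral $\int_{t/2}^t(1+(t-s)^{-\beta})\,ds=\tfrac{t}{2}+\tfrac{(t/2)^{1-\beta}}{1-\beta}$ is only polynomial in $t$; for $t\ge 1$ it is absorbed by $e^{-\eta t/2}$, and for $t\le 1$ it is $\le C(\eta)\,t^{1-\beta}$, which together with the prefactor $(1+(t/2)^{-\alpha})\le C\,t^{-\alpha}$ gives exactly $C(\eta)\,t^{1-\alpha-\beta}$.

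\textbf{Your self-correction at the end is the right idea.} The confusion you went through is real: the crude first pass produces $1+t^{-\alpha}+t^{-\beta}$, which genuinely is \emph{not} controlled by $1+t^{\min\{0,1-\alpha-\beta\}}$ when, say, $\alpha=0.9$, $\beta=0.05$ and $t\to 0$. The resolution you arrived at---keeping $\int_0^{t/2}(1+s^{-\alpha})\,ds\le C(\eta)\,t^{1-\alpha}$ for small $t$ rather than bounding it by a constant, and doing the analogous thing on the second half---is precisely what closes the argument. With that refinement and the corrected treatment of the second half above, the proof is complete.
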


\begin{lem}\label{lem:ns_nablac}
Let $N=2$, $p_0>2$. Suppose that
\[\mathop{\sup}\limits_{t\in(0,T_{\max})}\norm[L^{p_0}(\O)]{\nep(\cdot,t)}
<\infty.\]
Then there is $C>0$ such that for any $\ep>0$
\begin{align}\label{4:5}
\|\nabla \cep(\cdot,t)\|_{L^\infty(\Om)}\le C\;\;\text{ for all } t\in(0,T_{\max}).
\end{align}
\end{lem}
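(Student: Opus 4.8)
The goal is to bootstrap from $\nep$ bounded in $L^{p_0}(\Om)$ with $p_0>2$ to $\nabla\cep$ bounded in $L^\infty(\Om)$. The natural vehicle is the variation-of-constants representation for the second equation of \eqref{ep},
\[
\nabla\cep(\cdot,t)=\nabla e^{t\Delta}c_0-\int_0^t\nabla e^{(t-s)\Delta}\bigl(\nep\cep+\uep\cdot\nabla\cep\bigr)(\cdot,s)\,ds,
\]
where $e^{t\Delta}$ denotes the Neumann heat semigroup on $\Om$. First I would record the smoothing estimates $\|\nabla e^{t\Delta}\psi\|_{L^q(\Om)}\le C(1+t^{-\frac12-\frac N2(\frac1r-\frac1q)})e^{-\lambda_1 t}\|\psi\|_{L^r(\Om)}$ for $1\le r\le q\le\infty$, together with the fact that $\nabla e^{t\Delta}$ acting on a gradient field gains an extra derivative in a bounded fashion. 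Since $N=2$ and $\|c_0\|_{W^{1,q}(\Om)}<\infty$ with $q>2$, the first term is bounded in $L^\infty(\Om)$ directly.

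The second term is handled in two stages. In the first stage I estimate the source terms: $\nep\cep$ is bounded in $L^{p_0}(\Om)$ by Lemma \ref{lem:nc} (which gives $\|\cep\|_{L^\infty}\le\|c_0\|_{L^\infty}$) and the hypothesis $\sup_t\|\nep\|_{L^{p_0}}<\infty$; and $\uep\cdot\nabla\cep$ is controlled via Hölder using $\uep\in L^p(\Om)$ for every finite $p$ from \eqref{4:3}. Plugging $\nep\cep\in L^{p_0}$ into the Duhamel formula and choosing the target exponent appropriately, the semigroup estimate produces, via Lemma \ref{lem:integralestimates} with $\gamma=\delta=\lambda_1$ (to absorb the $e^{-\lambda_1(t-s)}$ decay against a constant-in-$s$ bound), a preliminary bound $\sup_t\|\nabla\cep\|_{L^{q_1}(\Om)}<\infty$ for some $q_1>2$ — in fact since $p_0>2=N$ one already reaches $q_1=\infty$ for the $\nep\cep$ contribution provided $-\frac12-\frac N2\cdot\frac1{p_0}>-1$, i.e.\ $p_0>N$. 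The convective term $\uep\cdot\nabla\cep$, however, initially only lies in $L^{r}$ with $\frac1r=\frac1p+\frac1{q_1}$ where $q_1$ is whatever gradient bound is currently available; this is where the bootstrap is needed.

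The core of the argument is therefore a fixed-point / self-improving loop: define $M(t):=\|\nabla\cep(\cdot,t)\|_{L^\infty(\Om)}$ (or, more carefully, $\sup_{s\in(0,t)}\|\nabla\cep(\cdot,s)\|_{L^\infty(\Om)}$, which is finite on compact subintervals by Lemma \ref{lem:localexistence}), insert the $L^\infty$ bound on $\uep$ and the current bound on $\nabla\cep$ into the integral, and use Lemma \ref{lem:integralestimates} to get an inequality of the form $M(t)\le C_0+C_1\sup_{s<t}\|\nabla\cep(\cdot,s)\|_{L^{q}(\Om)}$ with the exponent $q$ strictly smaller than the one produced on the left — since $N=2$, a single pass through $\nabla e^{(t-s)\Delta}$ applied to $\uep\cdot\nabla\cep\in L^{q}$ lands in $L^{q'}$ with $\frac1{q'}=\frac1q-\frac1\infty\cdot$(nothing) only if the time singularity $-\frac12-\frac N2(\frac1q-\frac1{q'})$ stays above $-1$, which for $N=2$ permits raising $\frac1{q'}=\frac1q-(1-\delta)$ for any $\delta>0$. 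Thus finitely many iterations (starting from $q_1>2=N$) push the integrability of $\nabla\cep$ up to $L^\infty$. Throughout, the time integrals converge because of the exponential factors $e^{-\lambda_1 s}$, $e^{-\lambda_1(t-s)}$ and the gap condition in Lemma \ref{lem:integralestimates} is met with $\gamma-\delta=0$ after a harmless perturbation (replacing one $\lambda_1$ by $\lambda_1-\eta$).

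\textbf{Main obstacle.} The delicate point is the convective term $\uep\cdot\nabla\cep$: it couples $\nabla\cep$ to itself through the factor $\uep$, so one cannot simply read off an $L^\infty$ bound in one step but must run the integrability ladder, and one must make sure each rung gains a definite amount ($\frac{N}{2}\cdot$ something bounded away from zero) so that the ladder terminates at $L^\infty$ in finitely many steps — this is precisely where the hypothesis $N=2$ (so that $\|\uep\|_{L^p}<\infty$ for \emph{all} $p<\infty$, by the preceding lemma) is used in an essential way. A secondary technical care is to carry out the iteration on a fixed finite interval first (where all quantities are a priori finite by Lemma \ref{lem:localexistence}), obtain the bound with a constant independent of the interval length thanks to the exponential decay, and then pass to $t\nearrow T_{\max}$.
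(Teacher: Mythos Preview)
Your setup is the same as the paper's: variation of constants for $\cep$, direct semigroup estimate on $\nabla e^{t\Delta}c_0$, and control of the $\nep\cep$ term in one shot via the $L^p$--$L^q$ smoothing with $p_0>N=2$. The difference lies entirely in how the convective term $\uep\cdot\nabla\cep$ is handled.

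You propose a finite-step bootstrap: use the current $L^q$ bound on $\nabla\cep$ together with $\uep\in L^p$ for all $p<\infty$ to push the integrability of $\nabla\cep$ strictly upward, and iterate until $q=\infty$. This works in principle, but you are vague about the starting rung of the ladder (you need an initial uniform bound $\sup_t\|\nabla\cep\|_{L^{q_1}}<\infty$ for some $q_1>2$, and this is not automatic from the $\nep\cep$ term alone since the convective term feeds back at every level). The paper bypasses the iteration entirely: it writes $M=\sup_{(0,T)}\|\nabla\cep\|_{L^\infty}$, picks $p>2$ and exponents $p_1,p_2$ with $\tfrac1p=\tfrac1{p_1}+\tfrac1{p_2}$, then interpolates
\[
\|\nabla\cep\|_{L^{p_2}}\le C\bigl(\|\nabla\cep\|_{L^\infty}^{\theta}\|\cep\|_{L^\infty}^{1-\theta}+\|\cep\|_{L^\infty}\bigr),\qquad \theta=1-\tfrac{2}{p_2}<1,
\]
so the Duhamel integral yields directly $M\le c_2+c_2M^\theta$, and Young's inequality closes it in a single step. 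This is cleaner and also avoids your technical workaround for the $\gamma=\delta$ issue in Lemma~\ref{lem:integralestimates} (the paper simply uses that $\int_0^\infty(1+\tau^{-\frac12-\frac1p})e^{-\lambda_1\tau}\,d\tau<\infty$). Both routes are correct, but the interpolation trick is worth internalizing: whenever the unknown appears on the right-hand side through a lower-order norm, interpolating against an already-bounded quantity to get a sublinear power of the top norm is usually faster than a bootstrap.
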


\begin{proof}
The variation of constants formula associated to $\cep$ implies
\begin{align}\nn
\|\nabla \cep(\cdot,t)\|_{L^\infty(\Om)}&\le \|\nabla e^{t\Delta}c_0\|_{L^\infty(\Om)}+\int_0^t\|\nabla e^{(t-s)\Delta}\nep(\cdot,s)\cep(\cdot,s)\|_{L^\infty(\Om)}ds\\\label{5.1.1}
&~~~~~~+\int_0^t\|\nabla e^{(t-s)\Delta}(\uep(\cdot,s)\cdot\nabla \cep(\cdot,s))\|_{L^\infty(\Om)}ds.
\end{align}
for all $t\in(0,T_{\max})$.
Recall that by the classical $L^p$-$L^q$ estimates for Neumann semigroup, there is $c_1>0$ such that
\begin{align}\label{5.1.2}
\|\nabla e^{t\Delta}c_0\|_{L^\infty(\Om)}\le c_1\|\nabla c_0\|_{L^\infty(\Om)}
\end{align}
for all $t\in(0,T_{\max})$ and for all $c_0\in W^{1,\infty}(\Om)$.
Since $p_0>2$, $L^p$-$L^q$ estimates yields that
\begin{align}\nn&
\int_0^t\|\nabla e^{(t-s)\Delta}\nep(\cdot,s)\cep(\cdot,s)\|_{L^\infty(\Om)}ds\\\nn
&\le \int_0^tc_1(1+(t-s)^{-\frac12-\frac1{p_0}})e^{-\lambda_1(t-s)}\|\nep(\cdot,s)\cep(\cdot,s)\|_{L^{p_0}(\Om)}ds
\\\label{5.1.3}
&\le \int_0^t c_1(1+(t-s)^{-\frac12-\frac1{p_0}})e^{-\lambda_1(t-s)}\|\nep(\cdot,s)\|_{L^{p_0}(\Om)}
\|\cep(\cdot,s)\|_{L^\infty(\Om)}ds,
\end{align}
for all $t\in(0,T_{\max})$,
which is bounded by (\ref{4:1}) and (\ref{2:c-Lp}).
Next we fix $p>2$ and moreover $p_1,p_2\in(p,\infty)$ satisfying $\frac1p=\frac{1}{p_1}+\frac{1}{p_2}$. Let $\theta=1-\frac{2}{p_2}\in(0,1)$, we thereby obtain
\begin{align}\nn
&~~~~\int_0^t\|\nabla e^{(t-s)\Delta}(\uep(\cdot,t)\cdot\nabla \cep(\cdot,t))\|_{L^\infty(\Om)}ds\\\nn
&\le \int_0^t c_1(1+(t-s)^{-\frac12-\frac1p})e^{-\lambda_1(t-s)}\|\uep(\cdot,t)\cdot\nabla \cep(\cdot,t)\|_{L^p(\Om)}ds\\\nn
&\le \int_0^t c_1(1+(t-s)^{-\frac12-\frac1p})e^{-\lambda_1(t-s)}\|\uep(\cdot,t)\|_{L^{p_1}(\Om)}\|\nabla \cep(\cdot,t)\|_{L^{p_2}(\Om)}ds\\\nn
&\le \int_0^t c_1(1+(t-s)^{-\frac12-\frac1p})e^{-\lambda_1(t-s)}\|\uep(\cdot,t)\|_{L^{p_1}(\Om)}
(\|\nabla \cep(\cdot,t)\|_{\Li}^\theta
\|\cep(\cdot,t)\|_{L^\infty(\Om)}^{1-\theta}\\\label{5.1.4}
&~~~~~~~~~~~~~~~~~~~~~~~~~~~~~~~~~~~~~~~~~~~~~~~~~~~~~~~~~~~+\|\cep(\cdot,s)\|_{\Li})ds
\end{align}
for all $t\in(0,T_{\max})$.
Let $T\in(0,T_{\max})$, and $M:=\mathop{\sup}\limits_{t\in(0,T)} \|\nabla \cep(\cdot,t)\|_{L^\infty(\Om)}$.
We see from (\ref{5.1.1}-\ref{5.1.4}) that
\begin{align*}
M\le c_2+c_2M^{\theta},
\end{align*}
with some $c_2>0$.
Since $\theta<1$, (\ref{4:5}) is obtained by Young's inequality.
\end{proof}

\subsection{Boundedness of $\nep$}
\begin{lem}\label{lem:4.6}
Let $N=2$, $p_0>2$. Suppose that
 \[\mathop{\sup}\limits_{t\in(0,T_{\max})}\norm[L^{p_0}(\O)]{\nep(\cdot,t)}
 <\infty.\]
Then there is $C>0$ such that for any $\ep>0$
\begin{align}
\norm[L^\infty(\Om)]{\nep(\cdot,t)}\le C \;\; \text{ for all } t\in(0,T_{\max}).
\end{align}
\end{lem}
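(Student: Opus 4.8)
\emph{Proof strategy.} The plan is to run a variation‑of‑constants argument for the first equation of \eqref{ep} via the Neumann heat semigroup, writing both the chemotactic flux and the convective term in divergence form. Since $\nabla\cdot\uep=0$ and $\uep=0$ on $\pa\O$, we have $\uep\cdot\nabla\nep=\nabla\cdot(\nep\uep)$ with $\nep\uep\cdot\nu=0$ on $\pa\O$, while $S_\ep$ vanishes on $\pa\O$ by construction; hence, setting $F_\ep:=\nep S_\ep(x,\nep,\cep)\cdot\nabla\cep-\nep\uep$,
\[
\nep(\cdot,t)=e^{t\Delta}n_0-\int_0^t e^{(t-s)\Delta}\nabla\cdot F_\ep(\cdot,s)\,ds,\qquad t\in(0,T_{\max}),
\]
is a valid identity compatible with the Neumann boundary condition.

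First I would collect the estimates already available under the present hypothesis $\sup_{t<T_{\max}}\norm[L^{p_0}(\O)]{\nep(\cdot,t)}<\infty$ with $p_0>2$: by the Sobolev embedding lemma of the previous subsection the velocity obeys $\norm[L^{p}(\O)]{\uep(\cdot,t)}\le C(p)$ for every $p\in(1,\infty)$; by Lemma~\ref{lem:ns_nablac} (applicable since $p_0>2$) one has $\norm[\Lin]{\nabla\cep(\cdot,t)}\le C$; by \eqref{2:Sepnorm} and \eqref{2:c-Lp}, $|S_\ep|\le S_0(\norm[\Lin]{c_0})$. Fixing any $q\in(2,p_0)$, writing $\tfrac1q=\tfrac1{p_0}+\tfrac1{p_1}$ with $p_1\in(1,\infty)$, and using that $\O$ is bounded, Hölder's inequality then gives, uniformly in $\ep\in(0,1)$ and $t\in(0,T_{\max})$,
\[
\norm[L^{q}(\O)]{F_\ep(\cdot,t)}\le S_0(\norm[\Lin]{c_0})\,\norm[\Lin]{\nabla\cep(\cdot,t)}\,\norm[L^{q}(\O)]{\nep(\cdot,t)}+\norm[L^{p_0}(\O)]{\nep(\cdot,t)}\,\norm[L^{p_1}(\O)]{\uep(\cdot,t)}\le C.
\]

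Next I would insert this into the variation‑of‑constants formula and invoke two standard facts about the Neumann heat semigroup on the bounded smooth domain $\O$: the $L^\infty$ maximum principle $\norm[\Lin]{e^{t\Delta}n_0}\le\norm[\Lin]{n_0}$, and the smoothing estimate $\norm[\Lin]{e^{\sigma\Delta}\nabla\cdot\psi}\le c_1\bigl(1+\sigma^{-\frac12-\frac{N}{2q}}\bigr)e^{-\lambda_1\sigma}\norm[L^q(\O)]{\psi}$ for divergence‑form data $\psi$ with $\psi\cdot\nu=0$ on $\pa\O$. Since $N=2$ and $q>2$, the exponent satisfies $\tfrac12+\tfrac{N}{2q}<1$, so $\sigma\mapsto(1+\sigma^{-\frac12-\frac1q})e^{-\lambda_1\sigma}$ is integrable on $(0,\infty)$, whence
\[
\norm[\Lin]{\nep(\cdot,t)}\le\norm[\Lin]{n_0}+c_1\,C\int_0^\infty\bigl(1+\sigma^{-\frac12-\frac1q}\bigr)e^{-\lambda_1\sigma}\,d\sigma<\infty
\]
uniformly in $\ep$ and $t$, which is the assertion.

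\emph{Main obstacle.} The computation is short; the one genuine point is that one needs an integrability exponent $q$ for $F_\ep$ strictly above the space dimension $N=2$, so that the singularity $(t-s)^{-\frac12-\frac{N}{2q}}$ produced by $\nabla\cdot e^{(t-s)\Delta}$ stays integrable at $s=t$. This is exactly where the hypothesis $p_0>2$ ($=N$) enters, and it is the reason the preceding lemmas first upgrade $\nabla\cep$ to $L^\infty(\O)$ and $\uep$ to all $L^p(\O)$. One should also verify that the divergence‑form semigroup estimate legitimately applies to $F_\ep$, which is ensured by $S_\ep|_{\pa\O}=0$ together with $\uep|_{\pa\O}=0$ and $\nabla\cdot\uep=0$.
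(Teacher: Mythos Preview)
Your proposal is correct and follows essentially the same route as the paper's proof: variation of constants for $\nep$, writing both the chemotactic and convective terms in divergence form, and closing via the $L^q\to L^\infty$ smoothing estimate for $e^{\sigma\Delta}\nabla\cdot(\,\cdot\,)$ with an exponent $q>N=2$. The only cosmetic difference is that the paper treats the two divergence terms separately (using $L^{p_0}$ for $\nep S_\ep\nabla\cep$ and a Hölder splitting $\tfrac1p=\tfrac1{p_0}+\tfrac1{p'}$ for $\nep\uep$), whereas you bundle them into a single flux $F_\ep$ estimated in a common $L^q$ with $q\in(2,p_0)$; both lead to the same integrable singularity and the same conclusion.
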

\begin{proof}
Following the variation-of-constants formula, we see that
\begin{align}\nn
\|\nep(\cdot,t)\|_{L^\infty(\Om)}&\le\|e^{t\Delta}n_0\|_{L^\infty(\Om)}
+\int_0^t\|e^{(t-s)\Delta}\nabla\cdot(\nep S_\ep(\cdot,\nep,\cep)\cdot\nabla\cep)(\cdot,s)\|_{L^\infty(\Om)}ds\\
&~~~~~~~~~~~~~~+\int_0^t\|e^{(t-s)\Delta}\uep(\cdot,s)\cdot\nabla\nep(\cdot,s)\|_{L^\infty(\Om)}ds
\end{align}
for all $t\in(0,T_{\max})$. The first term can be estimated as
\begin{align}
\|e^{t\Delta}n_0\|_{L^\infty(\Om)}\le c_1\|n_0\|_{L^\infty(\Om)}\;\; \text{ for all } t\in(0,T_{\max})
\end{align}
with some $c_1>0$.
Moreover, applying $L^p$-$L^q$  for Neumann semigroup, we obtain
$c_2>0$ such that
\begin{align}\nn
&\int_0^t\|e^{(t-s)\Delta}\nabla\cdot(\nep S_\ep(\cdot,\nep,\cep)\cdot\nabla\cep)(\cdot,s)\|_{L^\infty(\Om)}ds\\\nn
&\le c_2\int_0^t(1+(t-s)^{-\frac12-\frac1{p_0}})e^{-\lambda_1(t-s)}
\|(\nep S_\ep(\cdot,\nep,\cep)\cdot\nabla\cep)(\cdot,s)\|_{L^{p_0}(\Om)}ds\\\label{4:3.1}
&\le c_2S_0(\norm[\Lin]{c_0})\int_0^t(1+(t-s)^{-\frac12-\frac1{p_0}})e^{-\lambda_1(t-s)}\|\nep(\cdot,s)\|_{L^{p_0}(\Om)}
\|\nabla\cep(\cdot,s)\|_{L^\infty(\Om)}ds
\end{align}
for all $t\in(0,T_{\max})$. By (\ref{4:1}) and (\ref{4:5}), we know the right hand side of (\ref{4:3.1}) is bounded.
Noting that $\uep\cdot\nabla\nep=\nabla\cdot(\nep\uep)$, we pick $p>2$ and $p'>p$ such that $\frac1p=\frac1{p_0}+\frac1{p'}$, a similar reasoning as the above inequality shows that
\begin{align}\nn
&~~\int_0^t\|e^{(t-s)\Delta}\uep(\cdot,s)\cdot\nabla\nep(\cdot,s)\|_{L^\infty(\Om)}ds\\\nn
&=\int_0^t\|e^{(t-s)\Delta}\nabla\cdot(\nep(\cdot,s)\uep(\cdot,s))\|_{L^\infty(\Om)}ds\\\nn
&\le c_2\int_0^t(1+(t-s)^{-\frac12-\frac1{p}})e^{-\lambda_1(t-s)}\|\nep(\cdot,s)\uep(\cdot,s)\|_{L^p(\Om)}ds\\\nn
&\le
c_2\int_0^t(1+(t-s)^{-\frac12-\frac1{p}})e^{-\lambda_1(t-s)}
\|\nep(\cdot,s)\|_{L^{p_0}(\Om)}\|u(\cdot,s)\|_{L^{p'}(\Om)}ds
\end{align}
for all $t\in(0,T_{\max})$ due to (\ref{4:1}) and (\ref{4:3}), it is bounded by Lemma \ref{lem:integralestimates}. Thus we complete the proof by collecting the above estimates.
\end{proof}
\subsection{Proof of (i) in Proposition \ref{epboundedness}}

In order to prove global existence of the solution, it is left to show boundedness of $\norm[L^2(\O)]{A^\alpha \uep(\cdot,t)}$  due to the extensive criterion.

\begin{lem}\label{lem:boundu}
Suppose that
\[
\mathop{\sup}\limits_{t\in(0,T_{\max})}\norm[L^2(\O)]{\nep(\cdot,t)}<\infty,
\;\mathop{\sup}\limits_{t\in(0,T_{\max})}\norm[L^2(\O)]{\uep(\cdot,t)}<\infty,
\;\mathop{\sup}\limits_{t\in(0,T_{\max})}\norm[L^2(\O)]{\nabla \uep(\cdot,t)}<\infty.
\]
Then there is $C>0$ such that for every $\ep>0$
\begin{align}\label{lem:uinfty}
\|A^\alpha \uep(\cdot,t)\|_{L^2(\Om)}\le C\;\;\text{ for all }t\in(0,T_{\max}).
\end{align}
\end{lem}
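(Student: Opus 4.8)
The plan is to use the variation-of-constants representation of $\uep$ together with the fractional power estimate (\ref{fraction}) and the three given bounds. Writing
\[
\uep(\cdot,t)=e^{-tA}u_0-\kappa\int_0^t e^{-(t-s)A}\mathscr{P}\big[(\uep\cdot\nabla)\uep\big](\cdot,s)\,ds+\int_0^t e^{-(t-s)A}\mathscr{P}\big[\nep\nabla\phi\big](\cdot,s)\,ds,
\]
I would apply $A^\alpha$ to both sides and estimate in $L^2(\O)$. The first term is controlled by $\|A^\alpha u_0\|_{L^2(\O)}$ since $u_0\in D(A^\alpha)$. For the forcing term, since $\mathscr{P}$ is bounded on $L^2(\O)$ and $\nabla\phi\in L^\infty(\O)$, we get from (\ref{fraction}) the bound $\int_0^t C_\alpha (t-s)^{-\alpha}e^{-\mu(t-s)}\|\nep(\cdot,s)\|_{L^2(\O)}\,ds$, which is finite and uniformly bounded because $\alpha<1$ (so the singularity is integrable) and $\sup_{t}\|\nep(\cdot,t)\|_{L^2(\O)}<\infty$.

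The convective term is the only delicate one (and is vacuous when $\kappa=0$, so this really only matters for case (i), $N=2$). Here I would estimate $\|A^\alpha e^{-(t-s)A}\mathscr{P}[(\uep\cdot\nabla)\uep]\|_{L^2(\O)}\le C(t-s)^{-\alpha-\beta}e^{-\mu(t-s)}\|(\uep\cdot\nabla)\uep\|_{L^r(\O)}$ for a suitable $r<2$ and exponent $\beta$, using the mapping property $A^{\theta}e^{-tA}:L^r_\sigma(\O)\to L^2_\sigma(\O)$ with the appropriate loss; alternatively, write $(\uep\cdot\nabla)\uep=\nabla\cdot(\uep\otimes\uep)$ and use the smoothing estimate for $A^\alpha e^{-tA}\mathscr{P}\nabla\cdot$. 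Then I bound $\|\uep\otimes\uep\|_{L^r(\O)}\le\|\uep\|_{L^{2r}(\O)}^2$, and since $N=2$ the bound on $\|\nabla\uep(\cdot,t)\|_{L^2(\O)}$ gives, via Sobolev/Gagliardo--Nirenberg, control of $\|\uep(\cdot,t)\|_{L^{2r}(\O)}$ for every finite exponent; choosing $r\in(1,2)$ close to $1$ keeps the time exponent $\alpha+\tfrac12+\tfrac{N}{2}(\tfrac1r-\tfrac12)$ strictly below $1$, so the time integral converges and is bounded uniformly in $t$.

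Collecting the three contributions yields $\sup_{t\in(0,T_{\max})}\|A^\alpha\uep(\cdot,t)\|_{L^2(\O)}\le C$ with $C$ independent of $\ep$. The main obstacle is the bookkeeping of exponents in the convective term: one must verify that the combined power of $(t-s)$ coming from the fractional Stokes estimate plus the derivative smoothing stays below $1$ while the Sobolev embedding needed for $\|\uep\|_{L^{2r}(\O)}$ remains valid — this is exactly where $N=2$ enters, since in dimension three $H^1\not\hookrightarrow L^p$ for large $p$ and the argument would break. A minor point is choosing $\beta<1-\eta$ and applying Lemma \ref{lem:integralestimates} (or a direct computation) to absorb the two singular factors; since there is no exponential mismatch here (all decay rates are $\mu$), a crude bound $\int_0^\infty \tau^{-(\alpha+\beta)}e^{-\mu\tau}\,d\tau<\infty$ suffices.
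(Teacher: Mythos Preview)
Your overall plan is sound and your treatment of the initial datum and the forcing term $\nep\nabla\phi$ is fine, but the exponent bookkeeping in the convective term is wrong in a way that matters. With the $\nabla\cdot(\uep\otimes\uep)$ form the time exponent you write, $\alpha+\tfrac12+\tfrac{N}{2}(\tfrac1r-\tfrac12)$, equals $\alpha+\tfrac1r$ when $N=2$; since $\alpha\in(\tfrac N4,1)=(\tfrac12,1)$, this exceeds $1$ for \emph{every} $r\le 2$, and taking $r$ close to $1$ makes it larger, not smaller. The same problem afflicts the direct estimate $\|(\uep\cdot\nabla)\uep\|_{L^r}$ with $r$ near $1$. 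The fix is to go the other way: take $r$ large (say $r>\tfrac{1}{1-\alpha}>2$), so that $\|\uep\otimes\uep\|_{L^r}\le\|\uep\|_{L^{2r}}^2$ is still controlled by the $H^1$ bound in two dimensions while $\alpha+\tfrac1r<1$. With that correction your argument closes.

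The paper takes a different route that avoids this delicate choice of $r$ altogether. It keeps the convective term in $L^2$, estimating $\|(\uep\cdot\nabla)\uep\|_{L^2}\le\|\uep\|_{L^\infty}\|\nabla\uep\|_{L^2}$, and then uses the interpolation $\|\uep\|_{L^\infty}\le c\|A^\alpha\uep\|_{L^2}^{a}\|\uep\|_{L^2}^{1-a}$ with $a=\tfrac{N}{4\alpha}<1$. Setting $\tilde M=\sup_{(0,T)}\|A^\alpha\uep\|_{L^2}$ this produces a self-closing inequality $\tilde M\le c+c\tilde M^{a}$, which yields the bound by Young's inequality. The advantage of the paper's argument is that the only time singularity is $(t-s)^{-\alpha}$, so integrability is automatic and no Sobolev embedding for $\uep$ beyond the interpolation is needed; the advantage of your (corrected) argument is that it is entirely linear in the unknown and requires no bootstrap, at the cost of relying on the two-dimensional embedding $H^1\hookrightarrow L^p$ for all finite $p$.
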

\begin{proof}
Let $T>0$, we first define $M(t):=\|A^\alpha\uep(\cdot,t)\|_{L^2(\O)}$ for $t\in(0,T)$. Let $a=\frac{N}{4\alpha}$, from the Gagliardo-Nirenberg inequality and \cite[Lemma 2.3(iv)]{cao_lan} we know that there is constant $c_1>0$ such that
\begin{align}\label{interpolation}
\|\uep\|_{\Li}\le c_1\|A^\alpha\uep\|_{L^2(\Om)}^a\|\uep\|_{L^2(\O)}^{1-a}.
\end{align}
We apply $A^\alpha$ to both sides of the third equation in \eqref{ep}, a triangle-inequality implies that
\begin{align}\nn
\|A^{\alpha}\uep(\cdot,t)\|_{L^2(\O)}&\le\|A^\alpha e^{-tA} u_0\|_{L^2(\Om)}+\int_0^t\|A^\alpha e^{-(t-s)A} \mathscr{P}(\uep\cdot\nabla)\uep(\cdot,s)\|_{L^2(\O)}ds\\\label{6.7.1}
&~~~~~~~~~~~+\int_0^t\|A^\alpha e^{-(t-s)A}\mathscr{P}\nep(\cdot,s)\nabla\phi\|_{L^2(\Om)}ds.
\end{align}
First we have
\begin{align}
\|A^\alpha e^{-tA} u_0\|_{L^2(\Om)}\le c_\alpha\|e^{-(t-1)A}u_0\|_{L^2(\O)}\le c_2 e^{-\lambda'_1(t-1)}\|u_0\|_{L^2(\Om)}\;\;\text{ for all }t>0.
\end{align}
Thanks to Lemma \ref{lem:nablau}, we know that $\|\nabla\uep(\cdot,t)\|_{L^2(\O)}\le c_2$ with some $c_2>0$, which together with
(\ref{fraction}), (\ref{interpolation}) and Lemma \ref{lem:integralestimates} yields the existence of $c_{\alpha}>0$ and $c_3>0$ such that 
\begin{align}\nn
&~~~~\int_0^t\|A^\alpha e^{-(t-s)A} \mathscr{P}(\uep\cdot\nabla)\uep(\cdot,s)\|_{L^2(\O)}ds\\\nn
&\le \int_0^tC_\alpha(t-s)^{-\alpha}e^{-\lambda_1'(t-s)}\|(\uep\cdot\nabla)\uep(\cdot,s)\|_{L^2(\O)}ds\\\nn
&\le \int_0^tC_\alpha(t-s)^{-\alpha}e^{-\lambda_1'(t-s)}\|\uep(\cdot,s)\|_{\Li}\|\nabla\uep(\cdot,s)\|_{L^2(\O)}ds\\\nn
&\le \int_0^t C_\alpha c_1c_2(t-s)^{-\alpha}e^{-\lambda_1'(t-s)}\|A^\alpha\uep(\cdot,s)\|_{L^2(\Om)}^a
\|\uep(\cdot,s)\|_{L^2(\O)}^{1-a}ds\\\nn
&\le\mathop{\sup}\limits_{t\in(0,T_{\max})}\norm[L^2(\O)]{\uep(\cdot,t)}^{1-a}
\int_0^t C_\alpha c_1c_2(t-s)^{-\alpha}e^{-\lambda_1'(t-s)}M^a(s)ds\\\label{6.7.2}
&\le C_\alpha c_1c_2c_3\mathop{\sup}\limits_{t\in(0,T_{\max})}\norm[L^2(\O)]{\uep(\cdot,t)}^{1-a}\mathop{\sup}\limits_{t\in(0,T)}M^a(t)
\end{align}
for all $t\in(0,T_{\max})$. Furthermore, by (\ref{fraction}) and Lemma \ref{lem:integralestimates} we can find and $c_4>0$ such that
\begin{align}\nn
&~~~~\int_0^t\|A^\alpha e^{-(t-s)A}\mathscr{P}\nep(\cdot,s)\nabla\phi\|_{L^2(\Om)}ds\\\nn
&\le \int_0^tC_\alpha\|\nabla\phi\|_{\Li}(t-s)^{-\alpha}e^{-\lambda'_1(t-s)}\|\nep(\cdot,s)\|_{L^2(\O)}ds\\\nn
&\le C_\alpha\|\nabla\phi\|_{\Li}\mathop{\sup}\limits_{t\in(0,T_{\max})}\norm[L^2(\O)]{\nep(\cdot,t)}\int_0^t (t-s)^{-\alpha}e^{-\lambda'_1(t-s)}ds\\\label{6.7.3}
&\le C_\alpha c_4\|\nabla\phi\|_{\Li}\mathop{\sup}\limits_{t\in(0,T_{\max})}\norm[L^2(\O)]{\nep(\cdot,t)}
\end{align}
for all $t\in(0,T_{\max})$. Taking supremum on both sides of (\ref{6.7.1}) on $(0,T)$ with $T\in(0,T_{\max})$, we use (\ref{6.7.2}) and (\ref{6.7.3}) to find $c_5>0$ such that
\begin{align}
\tilde{M}\le c_5+c_5\tilde{M}^a,
\end{align}
where we have used the notation $\tilde{M}:=\mathop{\sup}\limits_{t\in(0,T)}M(t)$. An application of Young's inequality on the above inequality leads to the assertion.
\end{proof}

\begin{proof}[Proof of Proposition \ref{epboundedness} (i)]
Let $p_0>2$ and let $\delta_0:=\delta(p_0)$ as defined in Lemma \ref{lem:Lp}. We immediately see from Lemmata \ref{lem:4.1}-\ref{lem:4.6} that
$\norm[L^\infty(\O)]{\nep(\cdot,t)}$ is bounded. The boundedness of $\norm[W^{1,q}(\O)]{\cep(\cdot,t)}$ is obvious from Lemma \ref{lem:nc} and Lemma \ref{lem:ns_nablac}. Also Lemma \ref{lem:uinfty} implies that $\norm[L^2(\O)]{A^\alpha \uep(\cdot,t)}$ is bounded. According to Lemme \ref{lem:localexistence}, we deduce $T_{\max}=\infty$, thus the solution is global.
\end{proof}
\section{Boundedness in three-dimensional case ($N=3$, $\kappa=0$)}

In this section, we deal with the chemotaxis-Stokes system in the three-dimensional setting. Since for the Navier-Stokes system, it is impossible to have global classical solutions without any restrictions on $u_0$, we only consider the case $\kappa=0$ and only assume $\norm[\Lin]{c_0}$ small.

We first give a sufficient condition for boundedness which in conjunction with Lemma \ref{lem:localexistence} proves Theorem \ref{th1}. In fact, since Lemma \ref{lem:Lp} provide $L^p$ estimate for any $p>1$, we can of course choose $p$ sufficiently large to get boundedness in $L^\infty(\Om)$. However, we would like to give an optimal condition in the following
for our own interest.

\begin{proposition}\label{prop:bddn}
Let $N=3$, $p>\frac N2$, suppose that
\begin{align}
\mathop{\sup}\limits_{t\in(0,T_{\max})}\|\nep(\cdot,t)\|_{L^p(\Om)}<\infty \;\;\text{ for all } t\in(0,T_{\max}).
\end{align}
Then we have for any $\ep>0$
\begin{align}
\mathop{\sup}\limits_{t\in(0,T_{\max})}\|\nep(\cdot,t)\|_{L^\infty(\Om)}<\infty \;\;\text{ for all } t\in(0,T_{\max}).
\end{align}
\end{proposition}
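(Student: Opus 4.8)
The plan is to run a Moser-type iteration, or more precisely a semigroup-based bootstrap, exploiting the smoothing action of the Neumann heat semigroup together with the structural observation that both cross-diffusion and transport terms are in divergence form. Starting from the variation-of-constants representation
\[
\nep(\cdot,t)=e^{t\Delta}n_0+\int_0^t e^{(t-s)\Delta}\nabla\cdot\big(\nep S_\ep(\cdot,\nep,\cep)\cdot\nabla\cep-\nep\uep\big)(\cdot,s)\,ds,
\]
(using $\uep\cdot\nabla\nep=\nabla\cdot(\nep\uep)$ since $\nabla\cdot\uep=0$), I would estimate $\|\nep(\cdot,t)\|_{L^\infty(\Om)}$ via the gradient-regularizing bound $\|\nabla e^{\tau\Delta}\psi\|_{L^r}\le C(1+\tau^{-\frac12-\frac{N}{2}(\frac1q-\frac1r)})e^{-\lambda_1\tau}\|\psi\|_{L^q}$. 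The integrability of the singular kernel forces $\frac12+\frac N2\cdot\frac1q<1$, i.e. $q>\frac{N}{2-N+\ldots}$; since we are given $\|\nep\|_{L^p}$ with $p>\frac N2$, a single step of this kind does not immediately land in $L^\infty$ when $N=3$, so a finite iteration is needed.

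The key steps, in order, would be: (1) First upgrade the regularity of $\nabla\cep$. From $\sup_t\|\nep\|_{L^p}<\infty$ with $p>\frac32$, a semigroup argument on the $\cep$-equation — exactly parallel to Lemma~\ref{lem:ns_nablac} but with the three-dimensional exponents — together with Lemma~\ref{lem:nc} (which gives $\|\cep\|_{L^\infty}\le\|c_0\|_{L^\infty}$) and a preliminary $L^2$-bound on $\uep$ from Lemma~\ref{lem:4.1}, yields $\sup_t\|\nabla\cep(\cdot,t)\|_{L^{q_1}}<\infty$ for some $q_1>N$, hence in fact $\|\nabla\cep\|_{L^\infty}$ if one pushes far enough (the smallness of $c_0$ enters only through the already-established $L^p$-bound, so no new smallness is needed here). (2) Simultaneously control $\uep$: Lemma~\ref{lem:4.1} gives $\|\uep\|_{L^2}$ bounded and a spacetime $L^2$-bound on $\nabla\uep$; feeding $\|\nep\|_{L^p}$ ($p\ge\frac65$ suffices in 3D) into the Stokes variation-of-constants formula with fractional powers $A^\theta$ upgrades this to $\|\uep\|_{L^{r}}$ for suitable $r$, enough that the product $\nep\uep$ lies in a good $L^\sigma$. (3) With $\|\nabla\cep\|_{L^\infty}$ (or a high enough $L^{q_1}$) and $\|\uep\|_{L^r}$ in hand, return to the $\nep$-representation: the nonlinearity $\nep S_\ep\cdot\nabla\cep-\nep\uep$ is then bounded in $L^{p}$ (using $|S_\ep|\le S_0(\|c_0\|_{L^\infty})$ from \eqref{2:Sepnorm}), so the divergence-form semigroup estimate gives $\|\nep\|_{L^{p'}}$ for some $p'>p$; iterating this finitely many times increases the exponent past the threshold where $\frac12+\frac N2\cdot\frac1{p^{(k)}}<1$ fails to obstruct, and one final step lands in $L^\infty$. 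Throughout, Lemma~\ref{lem:integralestimates} handles the convolution integrals and supplies the uniform-in-$t$ (and uniform-in-$\ep$) constants via the exponential factors $e^{-\lambda_1(t-s)}$.

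I expect the main obstacle to be step (3) in the three-dimensional case: unlike the two-dimensional argument of Lemmas~\ref{lem:ns_nablac}--\ref{lem:4.6}, where $p_0>2$ already makes $\frac12+\frac1{p_0}<1$ and one clean fixed-point step closes everything, here with only $p>\frac N2=\frac32$ the first application of the gradient heat estimate on the cross-diffusion term requires $\frac12+\frac{3}{2}\cdot\frac1p<1\iff p>3$, which is not assumed. The resolution is to interleave: use the current $L^{p^{(k)}}$-bound on $\nep$ to get an improved $L^{q_k}$-bound on $\nabla\cep$ (the $\cep$-equation tolerates lower integrability because of the extra $\|\cep\|_{L^\infty}$ factor), then use that to bump $\nep$ to $L^{p^{(k+1)}}$, and check that the exponents form a strictly increasing sequence that reaches $\infty$ in finitely many steps — a bookkeeping argument but one that must be done carefully to ensure the gain $p^{(k+1)}-p^{(k)}$ does not degenerate to zero. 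An alternative, and arguably cleaner, route — hinted at in the remark preceding the proposition — is simply to invoke Lemma~\ref{lem:Lp} with $p_0$ chosen large enough at the outset (say $p_0>N$ or even $p_0>2N$) so that a single gradient-estimate step, combined with the $\nabla\cep$ and $\uep$ bounds, gives $L^\infty$ directly; the proposition's sharpened hypothesis $p>\frac N2$ is then recovered a posteriori by the iteration just described.
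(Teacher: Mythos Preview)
Your plan is sound and would succeed, but the paper avoids the iteration in your step~(3) by a cleaner device. After establishing, as you outline, that $\|\uep\|_{L^\infty}$ is bounded (Lemma~\ref{lem:s_bddu}; here $\kappa=0$, so the Stokes equation is linear and one application of the fractional-power estimate $\|A^\alpha e^{-tA}\|$ with $\alpha>\tfrac N4$ already lands in $L^\infty$ via embedding) and that $\|\nabla\cep\|_{L^{q_0}}$ is bounded for some $q_0>N$ (Lemma~\ref{lem:s_nablac}), the paper closes in a \emph{single} fixed-point step rather than a bootstrap. Setting $M=\sup_{t\in(0,T)}\|\nep(\cdot,t)\|_{L^\infty}$, one picks $p_0\in(N,q_0)$ and $q_1$ with $\tfrac1{p_0}=\tfrac1{q_0}+\tfrac1{q_1}$, and interpolates
\[
\|\nep\|_{L^{q_1}}\le\|\nep\|_{L^\infty}^{a}\,\|\nep\|_{L^1}^{1-a},\qquad a=1-\tfrac1{q_1}<1,
\]
where mass conservation \eqref{2:mass} supplies the $L^1$ factor for free. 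The divergence-form heat estimate on the representation formula then yields $M\le c+cM^a+cM^b$ with $a,b<1$, and Young's inequality bounds $M$. Your scheme $p^{(k)}\nearrow\infty$ reaches the same conclusion but carries the bookkeeping burden you correctly flag (non-degeneracy of the gain $p^{(k+1)}-p^{(k)}$); the paper's interpolation-with-$L^1$ trick trades that bookkeeping for the observation that sublinearity in $M$ is available from the outset. Your alternative route---take $p$ large in Lemma~\ref{lem:Lp} and bypass the sharp threshold $p>\tfrac N2$---is precisely the shortcut the paper acknowledges in the paragraph preceding the proposition but declines in favour of the optimal hypothesis.
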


We will prove the proposition by several lemmata, which improve regularity for $\uep$ and $\nabla\cep$
in suitable way.

\begin{lem}\label{lem:s_bddu}
Let $p>\frac N2$, suppose that
\begin{align}
\mathop{\sup}\limits_{t\in(0,T_{\max})}\|\nep(\cdot,t)\|_{L^p(\Om)}<\infty.
\end{align} 
There are $\alpha\in(\frac N4,1)$ and $C>0$ such that
\begin{align*}
&\norm[L^2(\O)]{A^\alpha \uep(\cdot,t)}\le C,\\
&\norm[L^\infty(\O)]{\uep(\cdot,t)}\le C \text{ for all } t\in(0,T_{\max}).
\end{align*}
\end{lem}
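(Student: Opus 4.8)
\textbf{Proof proposal for Lemma \ref{lem:s_bddu}.}

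The plan is to treat this exactly as a Stokes-semigroup fixed-point argument in the space $D(A^\alpha)$, in the spirit of the two-dimensional Lemma \ref{lem:boundu}, but now feeding in only the $L^p$-bound on $\nep$ with $p>\tfrac N2$ rather than an $L^2$-bound on $\nabla\uep$. First I would fix $\alpha\in(\tfrac N4,1)$ close enough to $1$ that the exponents below are admissible; concretely I need $\alpha>\tfrac N4$ so that $D(A^\alpha)\hookrightarrow L^\infty(\O)$ by the Gagliardo--Nirenberg-type embedding \cite[Lemma 2.3(iv)]{cao_lan}, and I need $\alpha<1$ together with $\tfrac N{2}\bigl(\tfrac1p-\tfrac12\bigr)+\alpha<1$ so that the time integral against $(t-s)^{-\alpha-\frac N2(\frac1p-\frac12)}$ converges; since $p>\tfrac N2$ and $N\le 3$ such an $\alpha$ exists (for $N=3$ one checks $\tfrac14(\tfrac1p-\tfrac12)<\tfrac14$, leaving room below $1$). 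Because $\kappa=0$ here, the nonlinear convective term $(\uep\cdot\nabla)\uep$ is absent, so the variation-of-constants formula for the third equation reads
\[
A^\alpha\uep(\cdot,t)=A^\alpha e^{-tA}u_0+\int_0^t A^\alpha e^{-(t-s)A}\mathscr P\bigl(\nep(\cdot,s)\nabla\phi\bigr)\,ds .
\]

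Next I would estimate the two terms. For the first, \eqref{fraction} gives $\|A^\alpha e^{-tA}u_0\|_{L^2(\O)}\le c\,\|A^\alpha u_0\|_{L^2(\O)}$ for $t\le 1$ (using $u_0\in D(A^\alpha)$ from \eqref{1:initial1}) and $\le c\,t^{-\alpha}e^{-\mu t}\|u_0\|_{L^2(\O)}$ for $t\ge 1$, so in any case it is bounded uniformly in $t>0$. For the second, I would insert the smoothing estimate for the Stokes semigroup between $L^p$ and $L^2$: there is $c>0$ with
\[
\|A^\alpha e^{-\sigma A}\mathscr P\psi\|_{L^2(\O)}\le c\,\sigma^{-\alpha-\frac N2(\frac1p-\frac12)}e^{-\mu\sigma}\|\psi\|_{L^p(\O)}\quad\text{for }\sigma>0,
\]
which follows from \eqref{fraction} composed with the $L^p$-$L^2$ mapping property of $e^{-\sigma A}$ recorded in \cite[Lemma 2.3]{cao_lan}. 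Applying this with $\psi=\nep(\cdot,s)\nabla\phi$ and $\|\psi\|_{L^p(\O)}\le\|\nabla\phi\|_{L^\infty(\O)}\sup_{s}\|\nep(\cdot,s)\|_{L^p(\O)}$, the integral is controlled by $\int_0^t(t-s)^{-\alpha-\frac N2(\frac1p-\frac12)}e^{-\mu(t-s)}\,ds$, which by Lemma \ref{lem:integralestimates} (with $\gamma=\delta=\mu$, $\beta=\alpha+\tfrac N2(\tfrac1p-\tfrac12)<1$, $\alpha=0$ in that lemma's notation) is bounded uniformly in $t>0$. Combining the two pieces yields $\sup_{t\in(0,T_{\max})}\|A^\alpha\uep(\cdot,t)\|_{L^2(\O)}\le C$ with $C$ independent of $\ep$.

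Finally, the $L^\infty$-bound on $\uep$ is immediate: the continuous embedding $D(A^\alpha)\hookrightarrow L^\infty(\O)$ valid for $\alpha>\tfrac N4$ (again \cite[Lemma 2.3(iv)]{cao_lan}, or Sobolev via $D(A^\alpha)\hookrightarrow W^{2\alpha,2}(\O)$) gives $\|\uep(\cdot,t)\|_{L^\infty(\O)}\le c\|A^\alpha\uep(\cdot,t)\|_{L^2(\O)}\le C$. The only real subtlety — and the step I expect to need the most care — is the bookkeeping on the exponent $\alpha+\tfrac N2(\tfrac1p-\tfrac12)$: one must verify it is strictly below $1$ while simultaneously keeping $\alpha>\tfrac N4$, which is exactly where the hypothesis $p>\tfrac N2$ (as opposed to merely $p>1$) is used, and it is the reason the $\kappa=0$ restriction matters, since with $\kappa=1$ the extra quadratic term $(\uep\cdot\nabla)\uep$ would force a genuine fixed-point/smallness argument rather than this direct linear estimate. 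Note that unlike in the two-dimensional case no smallness of $\|c_0\|_{L^\infty(\O)}$ enters this particular lemma; it is only used upstream in Lemma \ref{lem:Lp} to secure the hypothesis $\sup_t\|\nep\|_{L^p(\O)}<\infty$.
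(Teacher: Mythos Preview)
Your proposal is correct and follows essentially the same approach as the paper: both use the variation-of-constants formula for the Stokes equation (with $\kappa=0$ eliminating the convective term), compose the fractional-power estimate \eqref{fraction} with the $L^p\to L^2$ smoothing of the Stokes semigroup to control the forcing term, choose $\alpha\in\bigl(\tfrac N4,\,1-\tfrac N{2p}+\tfrac N4\bigr)$ so that the resulting time integral converges, and then conclude the $L^\infty$ bound via the embedding $D(A^\alpha)\hookrightarrow L^\infty(\O)$. One small slip worth cleaning up: Lemma \ref{lem:integralestimates} as stated requires $\gamma-\delta\ge\eta>0$, so invoking it with $\gamma=\delta=\mu$ is not quite legitimate, but the bound $\int_0^t(t-s)^{-\beta}e^{-\mu(t-s)}\,ds\le\int_0^\infty\sigma^{-\beta}e^{-\mu\sigma}\,d\sigma=\mu^{\beta-1}\Gamma(1-\beta)<\infty$ for $\beta<1$ is elementary and suffices (the paper itself uses the lemma equally loosely here).
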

\begin{proof}
The proof is very similar to Lemma \ref{lem:nablau}, we only need to deal with the term with less regularity of $\nep$, say, the case $p<2$. For $p\in(\frac N2,2)$ given in Lemma \ref{prop:bddn}, we can find $\alpha\in(\frac N4,\min\{1-\frac{N}{2p}+\frac N4,1\})$. We apply Lemma \ref{lem:integralestimates} to obtain some $c_2>0$ that
\begin{align}\nn
\int_0^t \|A^\alpha e^{(t-s)A}\nep\nabla \phi\|_{L^2(\O)} ds
&\le \int_0^t \|A^\alpha e^{\frac{(t-s)}{2}A}(e^{\frac{(t-s)}{2}A}\nep(\cdot,s)\nabla \phi)\|_{L^2(\O)} ds\\\nn
&\le \int_0^t C_\alpha(\frac{t-s}{2})^{-\alpha}e^{-\frac{\lambda_1'}{2}(t-s)}\|e^{\frac{(t-s)}{2}A}
\nep(\cdot,s)
\nabla \phi\|_{L^2(\O)} ds\\\nn
&\le \int_0^t C_\alpha(\frac{t-s}{2})^{-\alpha}e^{-\frac{\lambda_1'}{2}(t-s)} (\frac{t-s}{2})^{-\frac N2(\frac1p-\frac12)}
\|\nep(\cdot,s)\|_{L^{p}(\O)}\|\nabla\phi\|_{\Lin}ds\\\nn
&\le C_\alpha\int_0^t (t-s)^{-\alpha-\frac{N}{2p}+\frac N4} e^{-\frac{\lambda_1'}{2}(t-s)}\|\nep(\cdot,s)\|_{L^{p}(\O)}\|\nabla\phi\|_{\Lin}ds\\\nn
&\le C_\alpha\|\nabla\phi\|_{\Lin}\mathop{\sup}\limits_{t\in(0,T_{\max})}\|\nep(\cdot,t)\|_{L^p(\Om)}
\int_0^t (t-s)^{-\alpha-\frac{N}{2p}+\frac N4} e^{-\frac{\lambda_1'}{2}(t-s)}ds\\\nn
&\le C_\alpha c_2\|\nabla\phi\|_{\Lin}\mathop{\sup}\limits_{t\in(0,T_{\max})}\|\nep(\cdot,t)\|_{L^p(\Om)},
\end{align}
for all $t\in(0,T_{\max})$. Combine this fact with the proof of Lemma \ref{lem:boundu} we see that \(\norm[L^2(\O)]{A^\alpha\uep(\cdot,t)}\) is bounded for all $t\in(0,T_{\max})$. Since $\alpha>\frac N4$, embedding theorem implies the boundedness of \(\norm[\Lin]{\uep(\cdot,t)}\). Thus the proof is complete.
\end{proof}

\begin{lem}\label{lem:s_nablac}
Let $p>\frac N2$, suppose that
\begin{align}
\mathop{\sup}\limits_{t\in(0,T_{\max})}\|\nep(\cdot,t)\|_{L^p(\Om)}<\infty.
\end{align}
For $N<q_0<\frac{Np}{N-p}$, there is $C>0$ such that for any $\ep>0$,
\begin{align}
\|\nabla\cep(\cdot,t)\|_{L^{q_0}(\O)}\le C\;\; \text{ for all } t\in(0,T_{\max}).
\end{align}
\end{lem}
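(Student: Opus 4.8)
The plan is to estimate $\|\nabla\cep(\cdot,t)\|_{L^{q_0}(\O)}$ via the variation-of-constants formula for the second equation in \eqref{ep}, namely
\[
\nabla\cep(\cdot,t)=\nabla e^{t\Delta}c_0-\int_0^t\nabla e^{(t-s)\Delta}\bigl(\nep\cep+\uep\cdot\nabla\cep\bigr)(\cdot,s)\,ds,
\]
and to apply the standard $L^p$-$L^q$ smoothing estimates for the Neumann heat semigroup to each term. The first term is controlled by $\|\nabla c_0\|_{L^q(\O)}$ (recall $c_0\in W^{1,q}(\O)$, $q>N$, from \eqref{1:initial1}), possibly after enlarging $q_0$ down to $q$; alternatively one uses the regularizing estimate $\|\nabla e^{t\Delta}c_0\|_{L^{q_0}}\le c\,(1+t^{-1/2-N/(2q_0)+N/(2\cdot\infty)\cdot 0})e^{-\lambda_1 t}\|c_0\|_{L^\infty}$ together with \eqref{2:c-Lp}. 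For the reaction term, $\|\nep\cep\|_{L^p(\O)}\le\|\nep\|_{L^p(\O)}\|\cep\|_{L^\infty(\O)}$ is bounded by the hypothesis and \eqref{2:c-Lp}, and since $q_0<\frac{Np}{N-p}$ the Sobolev-type exponent condition $\frac12+\frac N2\bigl(\frac1p-\frac1{q_0}\bigr)<1$ holds, so the kernel $(t-s)^{-\frac12-\frac N2(1/p-1/q_0)}$ is integrable in $s$ near $s=t$; combined with the exponential factor $e^{-\lambda_1(t-s)}$ this yields a finite bound uniform in $t$, for instance via Lemma \ref{lem:integralestimates} with $\alpha=\frac12+\frac N2(1/p-1/q_0)<1$, $\beta=0$.

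The transport term $\uep\cdot\nabla\cep$ is the one requiring care, since it involves $\nabla\cep$ itself, so the argument must be closed by a fixed-point / self-bootstrap scheme exactly as in the proof of Lemma \ref{lem:ns_nablac}. By Lemma \ref{lem:s_bddu} we have $\|\uep(\cdot,t)\|_{L^\infty(\O)}\le C$, hence $\|\uep\cdot\nabla\cep\|_{L^{q_0}(\O)}\le C\|\nabla\cep\|_{L^{q_0}(\O)}$. Applying the gradient semigroup estimate with the same $L^{q_0}$-$L^{q_0}$ pairing gives a kernel $(1+(t-s)^{-1/2})e^{-\lambda_1(t-s)}$, which is again integrable; setting $M(T):=\sup_{t\in(0,T)}\|\nabla\cep(\cdot,t)\|_{L^{q_0}(\O)}$ for $T\in(0,T_{\max})$, the above estimates combine to yield $M\le c+c\,M^{\theta}$ for some $c>0$. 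To make the fixed-point argument genuinely close one either interpolates $\|\nabla\cep\|_{L^{q_0}}$ against a lower-order norm to gain an exponent $\theta<1$ (as in \eqref{5.1.4}), or — more simply here — exploits smallness: since the kernel bound for the transport term can be made small by shrinking $\lambda_1$-weighted time (or by the standard trick of splitting $\int_0^t=\int_0^{t-\tau}+\int_{t-\tau}^t$ and choosing $\tau$ small so the near-diagonal contribution has coefficient $<\tfrac12$), one gets $M\le c+\tfrac12 M$, whence $M\le 2c$. Either way the bound on $M$ is independent of $T$, and letting $T\nearrow T_{\max}$ gives \eqref{4:5}-type boundedness in $L^{q_0}(\O)$.

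The main obstacle is purely the transport term and the circularity it introduces: unlike the fluid-free setting one cannot directly read off $\nabla\cep$ regularity, and one must verify that the time-kernel exponent $\frac12+\frac N2(\frac1p-\frac1{q_0})$ stays strictly below $1$ — this is precisely where the hypothesis $q_0<\frac{Np}{N-p}$ enters, together with $p>\frac N2$ which guarantees the interval $(N,\frac{Np}{N-p})$ is nonempty — and that Lemma \ref{lem:s_bddu} indeed supplies $\uep\in L^\infty$ so that no further coupling with the (three-dimensional, only Stokes-governed) fluid regularity is needed. Once the exponent bookkeeping is checked, the rest is the routine fixed-point bootstrap already used twice in the paper.
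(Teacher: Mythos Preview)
Your overall strategy --- variation-of-constants, heat-semigroup smoothing, and closing a self-referential estimate on $M(T)=\sup_{t\in(0,T)}\|\nabla\cep(\cdot,t)\|_{L^{q_0}(\O)}$ --- is exactly what the paper does, and your option~(a) (interpolate $\|\nabla\cep\|$ against a lower-order norm to obtain $M\le c+cM^{a}$ with $a<1$) \emph{is} the paper's argument.  Concretely, the paper picks $q<q_0$ with $\frac1q-\frac1{q_0}<\frac1N$, uses $\|\uep\cdot\nabla\cep\|_{L^q}\le\|\uep\|_{L^\infty}\|\nabla\cep\|_{L^q}$ together with the Gagliardo--Nirenberg bound $\|\nabla\cep\|_{L^q}\le c\|\nabla\cep\|_{L^{q_0}}^{a}\|\cep\|_{L^\infty}^{1-a}+c\|\cep\|_{L^\infty}$, and then the $L^q$--$L^{q_0}$ semigroup estimate.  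So option~(a) is correct and is the paper's proof.

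However, your option~(b), the ``simpler'' smallness/splitting trick, does \emph{not} work as stated and you should drop it.  Writing $\int_0^t=\int_0^{t-\tau}+\int_{t-\tau}^t$, it is true that the near-diagonal piece contributes at most $C\cdot M\int_0^\tau(1+r^{-1/2})\,dr$, which can be made $\le\tfrac12 M$ by taking $\tau$ small.  But the far piece $\int_0^{t-\tau}$ still carries $\|\nabla\cep(\cdot,s)\|_{L^{q_0}}\le M$ and a kernel bound $\le(1+\tau^{-1/2})\int_0^\infty e^{-\lambda_1 r}\,dr$, which is a \emph{fixed} constant, not small.  So you only obtain $M\le c + cM$ with a coefficient you cannot push below $1$.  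At best a singular-Gronwall argument yields a bound growing like $e^{Ct}$, which contradicts your claim that ``either way the bound on $M$ is independent of $T$''.  The uniform-in-$t$ bound genuinely requires the sublinear exponent from interpolation, i.e.\ your option~(a).
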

\begin{proof}
Let $t\in(0,T_{\max})$ and $M:=\mathop{\sup}\limits_{t\in(0,T)}\|\nabla\cep(\cdot,t)\|_{L^{q_0}(\O)}$.
The variation of constants formula implies
\begin{align}\nn
\|\nabla\cep(\cdot,t)\|_{L^{q_0}(\Om)}&\le \|\nabla e^{t\Delta}c_0\|_{L^{q_0}(\Om)}+\int_0^t\|\nabla e^{(t-s)\Delta}(\nep\cep)(\cdot,s)\|_{L^{q_0}(\Om)}ds \\\nn
&~~~~~~~~~~~~~~+\int_0^t\|\nabla e^{(t-s)\Delta}(\uep\cdot\nabla\cep)(\cdot,s)\|_{L^{q_0}(\Om)}ds
\end{align}
for all $t\in(0,T_{\max})$.
By the $L^p$-$L^q$ estimates we see that
\begin{align}\nn
\|\nabla e^{t\Delta}c_0\|_{L^{q_0}(\Om)}\le c_1 t^{-\frac12}\|c_0\|_{L^{q_0}(\O)}
\end{align}
for some $c_1>0$ and for all $t>0$.
Since $1<q_0<\frac{Np}{N-p}$, we know that $-\frac12-\frac N2(\frac 1p-\frac 1{q_0})>-1$, thus we estimate the second term by $L^p$-$L^q$ estimate for Neumann semigroup with $c_2>0$ such that
\begin{align}\nn
&\int_0^t\|\nabla e^{(t-s)\Delta}\nep(\cdot,s)\cep(\cdot,s)\|_{L^{q_0}(\Om)}ds\\\nn
&\le \int_0^t c_2(1+(t-s)^{-\frac12-\frac N2(\frac 1p-\frac 1{q_0})})e^{-\lambda_1(t-s)}\|\nep(\cdot,s)\cep(\cdot,s)\|_{\Lp}ds\\\nn
&\le \int_0^t c_2(1+(t-s)^{-\frac12-\frac N2(\frac 1p-\frac 1{q_0})})e^{-\lambda_1(t-s)}\|\nep(\cdot,s)\|_{\Lp}\|\cep(\cdot,s)\|_{\Li}ds
\end{align}
for all $t\in(0,T_{\max})$. It is bounded due to the choice of $q_0$ and our assumption on $\norm[L^p(\Om)]{\nep(\cdot,t)}$.
Now we fix $q<q_0$ satisfying $\frac 1q-\frac 1{q_0}<\frac 1N$, and let $a=\frac{1-\frac Nq}{1-\frac N{q_0}}\in(0,1)$. H\"older's inequality as well as interpolation inequality yield the existence of $c_3>0$,
\begin{align}\nn
&\int_0^t\|\nabla e^{(t-s)\Delta}\uep(\cdot,s)\cdot\nabla\cep(\cdot,s)\|_{L^{q_0}(\Om)}ds\\\nn
&\le\int_0^t c_2(1+(t-s)^{-\frac12-\frac N2(\frac 1q-\frac 1{q_0})})e^{-\lambda_1(t-s)}\|\uep(\cdot,s)\nabla\cep(\cdot,s)\|_{\Lq}ds\\\nn
&\le \int_0^t c_2(1+(t-s)^{-\frac12-\frac N2(\frac 1q-\frac 1{q_0})})e^{-\lambda_1(t-s)}\|\nabla\cep(\cdot,s)\|_{\Lq}\|\uep(\cdot,s)\|_{\Li}ds\\\nn
&\le \int_0^t c_2(1+(t-s)^{-\frac12-\frac N2(\frac 1q-\frac 1{q_0})})e^{-\lambda_1(t-s)}\|\uep(\cdot,s)\|_{\Li}(c_3\|\nabla\cep(\cdot,s)\|_{L^{q_0}(\O)}^a
\|\cep(\cdot,s)\|_{\Li}^{1-a}\\\nn
&~~~~~~~~~~~~~~~~~~~~~~~~~~~~~~~~~~~~~~~~~~~+c_3\|\cep(\cdot,s)\|_{\Li})ds\\\nn
&\le c_4M^a+c_4.
\end{align}
for all $t\in(0,T_{\max})$ with some $c_4>0$.
The assertion can be seen by combining the above estimates and the fact that $a<1$.
\end{proof}

Having enough regularity for both $\uep$ and $\nabla\cep$, we are ready to prove boundedness for $\nep$.
\begin{proof}[Proof of Proposition \ref{prop:bddn}]
Let $T\in(0,T_{\max})$ and $M:=\mathop{\sup}\limits_{t\in(0,T)}\|\nep(\cdot,t)\|_{\Li}$.
The representation formula for $\nep$ yields that
\begin{align}
\|\nep(\cdot,t)\|_{\Li}&\le \|e^{t\Delta}n_0\|_{\Li}+\int_0^t\|e^{(t-s)\Delta}\nabla\cdot(\nep S_\ep(\cdot,\nep,\cep)\nabla\cep)(\cdot,s)\|_{\Li}ds\\\nn
&~~~~~~+\int_0^t
\|e^{(t-s)\Delta}\nabla\cdot(\nep(\cdot,s)\uep(\cdot,s))\|_{\Li}ds
\end{align}
for all $t\in(0,T_{\max})$.
Since $q_0>N$, we can find $N<p_0<q_0$ and $q_1>1$ such that $\frac{1}{p_0}=\frac{1}{q_0}+\frac{1}{q_1}$. Let $a=1-\frac{1}{q_1}$. $L^p$-$L^q$ estimates for the Neumann heat semigroup and H\"older interpolation imply $c_1>0$ and $c_2>0$
\begin{align}\nn
&\int_0^t\|e^{(t-s)\Delta}\nabla\cdot(\nep S_\ep(\cdot,\nep,\cep)\cdot\nabla\cep)(\cdot,s)\|_{\Li}ds\\\nn
&\le \int_0^t c_1(1+(t-s)^{-\frac 12-\frac N{2p_0}})e^{-\lambda_1(t-s)}\|(\nep S_\ep(\cdot,\nep,\cep)\cdot\nabla\cep)(\cdot,s)\|_{L^{p_0}(\O)}ds\\\nn
&\le \int_0^t c_1S_0(1+(t-s)^{-\frac 12-\frac N{2p_0}})e^{-\lambda_1(t-s)} \|\nabla\cep(\cdot,s)\|_{L^{q_0}(\O)}\|\nep(\cdot,s)\|_{L^{q_1}(\O)}ds\\\nn
&\le \int_0^t c_1S_0(1+(t-s)^{-\frac 12-\frac N{2p_0}})e^{-\lambda_1(t-s)} \|\nabla\cep(\cdot,s)\|_{L^{q_0}(\O)}\|\nep(\cdot,s)\|_{\Li}^a \|\nep(\cdot,s)\|_{L^1(\O)}ds\\\nn
&\le c_2+c_2 M^a, \text{ for all } t\in(0,T_{\max}).
\end{align}
Now we pick $p_1>N$, and let $b=1-\frac{1}{p_1}$. The $L^p-L^q$ estimate and the interpolation inequality imply
\begin{align}
&\int_0^t
\|e^{(t-s)\Delta}\nabla\cdot(\nep(\cdot,s)\uep(\cdot,s))\|_{\Li}ds\\\nn
&\le \int_0^t
c_1(1+(t-s)^{-\frac12-\frac N{2p_1}})e^{-\lambda_1(t-s)}\|\nep(\cdot,s)\uep(\cdot,s)\|_{L^{p_1}(\Om)}ds\\\nn
&\le \int_0^t c_1(1+(t-s)^{-\frac12-\frac N{2p_1}})e^{-\lambda_1(t-s)}\|\nep(\cdot,s)\|_{L^{p_1}(\Om)}
\|\uep(\cdot,s)\|_{\Li}ds\\\nn
&\le \int_0^t c_1(1+(t-s)^{-\frac12-\frac N{2p_1}})e^{-\lambda_1(t-s)}
\|\uep(\cdot,s)\|_{\Li} \|\nep(\cdot,s)\|_{\Li}^b\|\nep(\cdot,s)\|_{L^1(\O)}^{1-b}ds.
\end{align}
Finally, collecting the above estimates,  we conclude the assertion by a similar reasoning as in Lemma \ref{lem:s_nablac}.
\end{proof}

\subsection{Proof of Proposition \ref{epboundedness} (ii)}
Combining Proposition \ref{prop:bddn} and Lemma \ref{lem:Lp} proves Proposition \ref{epboundedness}.
\begin{proof}[Proof of Proposition \ref{epboundedness} (ii)]
Let $p>2$ and let $\delta_0:=\delta_0(p)$ as defined in Lemma \ref{lem:Lp}. We see that (\ref{c0}) implies $\norm[L^p(\Om)]{n(\cdot,t)}$ is bounded, which combined with Proposition \ref{prop:bddn} yields the boundedness of $\norm[\Lin]{\nep(\cdot,t)}$. The boundedness of $\norm[\Lin]{\nabla\cep(\cdot,t)}$ has been shown in Lemma \ref{lem:s_nablac}. Together with Lemma \ref{lem:s_bddu}, we see that the solution is global by Lemma \ref{lem:localexistence}.
\end{proof}

\section{Passing to the limit}

We now wish to obtain the solution of (\ref{eq}) by sending $\ep\to 0$ for the approximated solution.
In order to achieve this, we shall first prepare some estimates for $(\nep,\cep,\uep)$ which are independent of $\ep$. Since we cannot expect the regularity in $C^{2+\alpha,1+\frac{\alpha}{2}}
(\overline{\O}\times(0,\infty))$ to be uniform in $\ep$ due to the presence of $S_\ep$, we will first show the triple of limit functions solves \eqref{eq} in the sense of distributions, then apply standard parabolic regularity to show that it is actually a classical solution. The procedure is quite similar to that in \cite{cao_lan}.\\

Let us first define a weak solution.
\begin{dnt}\label{def:1}
We say that $(n,c,u,P)$ is a global weak solution of (\ref{eq}) associated to initial data $(n_0,c_0,u_0)$ if
\begin{align}
\left\{
\begin{array}{llc}
 n\in L^\infty((0,\infty)\times\Om)\cap L^2_{loc}((0,\infty);W^{1,2}(\Om)),\\[6pt]
  c\in  L^\infty((0,\infty)\times\Om)\cap L^2_{loc}((0,\infty);W^{1,2}(\Om)),\\[6pt]
   u\in L^\infty((0,\infty)\times\Om)\cap L^2_{loc}((0,\infty);W_{0,\sigma}^{1,2}(\Om)),\\[6pt]
   P\in L^2((0,T); W^{1,2}(\Om)),
\end{array}
\right.
\end{align}
and for all $\psi\in C_0^\infty(\overline{\O}\times[0,\infty);\mathbb{R})$ and all $\zeta\in C_{0,\sigma}^\infty(\Om\times[0,\infty);\mathbb{R}^N)$ the following identities hold:
\begin{align}\nn
\displaystyle
-\int_0^\infty\intO n\psi_t-\intO n_0\psi(\cdot,0)&=-\int_0^\infty\intO \nabla n\cdot\nabla\psi
\\\label{weaksol}
&~~~~~~~~~~~~~~~~+\int_0^\infty\intO nS(x,n,c)\cdot\nabla c\cdot\nabla\psi+\int_0^\infty\intO nu\cdot\nabla\psi,\\[6pt]
\displaystyle
-\int_0^\infty\intO c\psi_t-\intO c_0\psi(\cdot,0)&=-\int_0^\infty\intO \nabla c\cdot\nabla\psi
-\int_0^\infty\intO nc\psi+\int_0^\infty\intO cu\cdot\nabla\psi,\\[6pt]
\displaystyle
-\int_0^\infty\intO u\cdot\zeta_t-\intO u_0\cdot\zeta(\cdot,0)&=-\int_0^\infty\intO \nabla u\cdot\nabla\zeta
+\int_0^\infty\intO (u\cdot\nabla) u\cdot\zeta+\int_0^\infty\intO n\nabla \phi\cdot\zeta.
\end{align}
\end{dnt}

The required estimates are very close to those in \cite{cao_lan}. We will state the results here and only give a sketch of the proofs.

\begin{lem}\label{lem:estimate1}
There exists $C>0$ such that for all $\ep\in(0,1)$
\begin{align}
\label{est:nablac}
&\int_0^\infty\intO |\nabla \cep|^2<C,\\
\label{est:nablan}
&\int_0^\infty\intO |\nabla \nep|^2<C.
\end{align}
\end{lem}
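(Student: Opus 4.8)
The plan is to obtain both space-time estimates directly from the dissipation that has already been extracted in Lemma \ref{lem:Lp} together with the uniform $L^\infty$-bounds supplied by Proposition \ref{epboundedness}. For \eqref{est:nablac}, the natural choice is to test the second equation of \eqref{ep} by $\cep$ itself. After integrating by parts and using $\nabla\cdot\uep=0$ to discard the convective term (since $\intO \cep\,\uep\cdot\nabla\cep=\tfrac12\intO \uep\cdot\nabla\cep^2=0$), one arrives at
\begin{align*}
\frac12\frac{d}{dt}\intO \cep^2+\intO|\nabla\cep|^2=-\intO \nep\cep^2\le 0,
\end{align*}
so that $t\mapsto\intO\cep^2$ is nonincreasing and integration over $(0,\infty)$ yields $\int_0^\infty\intO|\nabla\cep|^2\le\tfrac12\intO c_0^2\le\tfrac12|\Om|\,\norm[\Lin]{c_0}^2$, which is the desired $\ep$-independent bound.

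\textbf{The gradient estimate for $\nep$.} For \eqref{est:nablan} I would use Lemma \ref{lem:Lp} with the exponent $p=2$: inequality \eqref{3:2} already gives $\int_0^{T_{\max}}\intO \nep^{2-2}|\nabla\nep|^2=\int_0^{T_{\max}}\intO|\nabla\nep|^2\le C$, and since Proposition \ref{epboundedness} guarantees $T_{\max}=\infty$, this is exactly \eqref{est:nablan}. (Alternatively, and without invoking Lemma \ref{lem:Lp} at $p=2$ directly, one can test the first equation of \eqref{ep} by $\nep$, integrate by parts, drop the convective term using $\nabla\cdot\uep=0$, and estimate $\intO \nep S_\ep\cdot\nabla\cep\cdot\nabla\nep\le \tfrac12\intO|\nabla\nep|^2+\tfrac12 S_0^2(\norm[\Lin]{c_0})\norm[\Lin]{\nep}^2\intO|\nabla\cep|^2$; together with mass conservation \eqref{2:mass}, the uniform $L^\infty$-bound on $\nep$, and the already-established \eqref{est:nablac}, a time integration gives the claim.) Both routes produce a constant depending only on the data, $S_0(\norm[\Lin]{c_0})$ and the uniform bounds of Proposition \ref{epboundedness}, hence independent of $\ep$.

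\textbf{Main obstacle.} There is no serious analytical difficulty here; the work is entirely in bookkeeping. The one point that needs care is the appearance of the fluid convection terms in the energy identities: one must verify that $\uep=0$ on $\pa\O$ together with $\nabla\cdot\uep=0$ genuinely makes $\intO \cep\,\uep\cdot\nabla\cep$ and $\intO \nep\,\uep\cdot\nabla\nep$ vanish (after writing them as $\tfrac12\intO\uep\cdot\nabla(\cep^2)$, resp.\ $\tfrac12\intO\uep\cdot\nabla(\nep^2)$, and integrating by parts, with the boundary term killed by $\uep=0$). A secondary subtlety, should one prefer the testing argument for $\nep$ over citing \eqref{3:2}, is that the coupling term $\intO\nep S_\ep\cdot\nabla\cep\cdot\nabla\nep$ must be absorbed by the diffusion using Young's inequality, and here the uniform $L^\infty$-bound on $\nep$ from Proposition \ref{epboundedness} is exactly what is needed to keep the resulting constant $\ep$-independent. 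Everything else is a direct consequence of estimates already proved in the excerpt.
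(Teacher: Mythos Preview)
Your proposal is correct and matches the paper's own proof essentially line for line: test the $\cep$-equation by $\cep$ and use $\nabla\cdot\uep=0$ to drop the convection, then invoke \eqref{3:2} with $p=2$ together with $T_{\max}=\infty$ from Proposition \ref{epboundedness} for the $\nep$-estimate. Your alternative route via testing the $\nep$-equation and your discussion of the boundary/convection terms are extra but entirely compatible additions.
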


\begin{proof}
Multiply the second equation with $\cep$, using the fact $\nabla\cdot \uep=0$, we obtain that
\begin{align}
\frac{1}{2}\frac{d}{dt}\intO \cep^2+\intO |\nabla \cep|^2\le 0,
\end{align}
this implies (\ref{est:nablac}) by direct integration. Since $T_{\max}=\infty$, letting $p=2$
in (\ref{3:2}), we see that (\ref{est:nablan}) holds.
\end{proof}



\begin{lem}\label{lem:regularity1}
All bounded solution $(\nep,\cep,\uep,P_\ep)$ of (\ref{ep}) satisfy
\begin{align}\label{holder}
  \nep\in C^{\gamma,\frac\gamma2}_{loc}(\overline{\Om}\times(0,\infty)),\;
  \cep\in C^{\gamma,\frac\gamma2}_{loc}(\overline{\Om}\times(0,\infty)),\;
  \uep\in C^{1+\gamma,\gamma}_{loc}(\overline{\Om}\times(0,\infty)).
 \end{align}
More precisely, there is $C>0$ such that for all $\ep\in(0,1)$, and all $s\in[1,\infty)$ we have
\begin{align}\label{holder:n}
&\|\nep\|_{C^{\gamma,\frac{\gamma}{2}}(\overline{\O}\times[s,s+1])}\le C,\\\label{holder:c}
&\|\cep\|_{C^{\gamma,\frac{\gamma}{2}}(\overline{\O}\times[s,s+1])}\le C,\\\label{holder:u}
&\|\uep\|_{C^{1+\gamma,\gamma}(\overline{\O}\times[s,s+1])}
\le C.
\end{align}
\end{lem}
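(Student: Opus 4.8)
The plan is to establish the H\"older bounds \eqref{holder:n}--\eqref{holder:u} by combining the $\ep$-uniform boundedness from Proposition \ref{epboundedness} with standard interior (and boundary) parabolic regularity estimates, applied on the generic time slab $\overline{\O}\times[s,s+1]$ with constants independent of $s\ge 1$ and of $\ep\in(0,1)$. First I would treat $\uep$: since $\sup_{t>0}\|A^\alpha\uep(\cdot,t)\|_{L^2(\O)}\le C$ with $\alpha>\frac N4$, the embedding $D(A^\alpha)\hookrightarrow L^\infty(\O)$ (indeed into $W^{1,r}$ for suitable $r$) gives a uniform $L^\infty$ bound on $\uep$, hence a uniform bound on the forcing term $\nep\nabla\phi$ in $L^\infty((0,\infty);L^q(\O))$ for any $q<\infty$; parabolic $L^q$ theory for the (Navier--)Stokes system then yields $\uep\in L^q_{loc}(W^{2,q})\cap W^{1,q}_{loc}(L^q)$ locally uniformly, and Sobolev embedding upgrades this to the $C^{1+\gamma,\gamma}$ bound \eqref{holder:u}. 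For $\cep$, I would view the second equation as a linear inhomogeneous heat equation with Neumann data and right-hand side $-\nep\cep-\uep\cdot\nabla\cep$; using the already established bounds $\|\nep\|_{L^\infty}\le C$, $\|\cep\|_{L^\infty}\le C$ (Lemma \ref{lem:nc}) and $\|\nabla\cep\|_{L^\infty}\le C$ (Lemma \ref{lem:ns_nablac} / Lemma \ref{lem:s_nablac} plus interpolation), the inhomogeneity is uniformly bounded in $L^\infty$, so interior--boundary Schauder or $L^q$ estimates give \eqref{holder:c}.

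The component $\nep$ is the delicate one and I expect it to be the main obstacle, because the flux $\nep S_\ep(x,\nep,\cep)\cdot\nabla\cep + \nep\uep$ involves the cutoff $S_\ep$ whose $C^2$-norm is not uniform in $\ep$; hence one cannot directly differentiate the equation and must work at the level of divergence-form estimates. The remedy is to use the result of Porzio--Vespri (or the Ladyzhenskaya--Solonnikov--Ural'tseva framework) on H\"older continuity for quasilinear parabolic equations in divergence form $\nep_t=\nabla\cdot(\nabla\nep - f_\ep)$, where $f_\ep:=\nep S_\ep\cdot\nabla\cep + \nep\uep$ is bounded in $L^\infty((0,\infty)\times\O)$ \emph{uniformly} in $\ep$ by the bounds already in hand together with \eqref{2:Sepnorm}. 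That theory produces an interior H\"older exponent $\gamma\in(0,1)$ and a bound depending only on $\|f_\ep\|_{L^\infty}$, $\|\nep\|_{L^\infty}$, $N$ and $\O$, which is exactly \eqref{holder:n}; the Neumann boundary condition $\nabla\nep\cdot\nu = S_\ep\nabla\cep\cdot\nu$ (with uniformly bounded right-hand side) allows the same estimate up to $\overline{\O}$ via the standard reflection/boundary-regularity argument. The translation-invariance in time of all these estimates gives the uniformity in $s$.

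A technical point worth flagging is the interdependence of the three estimates: the bound for $\nep$ presupposes control of $\nabla\cep$ in $L^\infty$, which in turn used the $L^\infty$ bound of $\uep$, which used the $D(A^\alpha)$ bound of $\uep$ — but all of these are already furnished (uniformly in $\ep$) by Proposition \ref{epboundedness} and Lemmata \ref{lem:ns_nablac}, \ref{lem:s_nablac}, \ref{lem:s_bddu}, \ref{lem:uinfty}, so there is no circularity, only a fixed order in which the cited regularity theorems must be invoked. Since the paper explicitly says it will only sketch the proof (``we will state the results here and only give a sketch of the proofs''), I would present exactly this chain: (1) $\uep$ uniform $L^\infty$ and $C^{1+\gamma,\gamma}$ via Stokes $L^q$--$C^{1+\gamma}$ regularity; (2) $\cep$ uniform $C^{\gamma,\gamma/2}$ via linear parabolic Schauder theory with the $L^\infty$ data just obtained; (3) $\nep$ uniform $C^{\gamma,\gamma/2}$ via divergence-form H\"older regularity (Porzio--Vespri) with the uniformly bounded flux $f_\ep$, up to the boundary using the Neumann condition. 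This is, as the text notes, essentially the argument of \cite{cao_lan}, adapted to the present setting where only $\|c_0\|_{L^\infty(\O)}$ is assumed small.
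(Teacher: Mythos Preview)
Your proposal is correct and follows essentially the same route as the paper: Porzio--Vespri divergence-form H\"older regularity for $\nep$, maximal $L^q$ Stokes regularity (Giga--Sohr) followed by embedding for $\uep$, and linear parabolic regularity for $\cep$, all fed by the $\ep$-uniform bounds from Proposition~\ref{epboundedness} and Lemmata~\ref{lem:ns_nablac}/\ref{lem:s_nablac}. Two minor remarks: in the regularized problem \eqref{ep} the boundary condition for $\nep$ is actually \emph{homogeneous} Neumann (since $S_\ep=0$ on $\partial\O$), so your boundary step is even simpler than you indicate; and the paper makes the ``translation invariance'' precise by multiplying with a temporal cutoff $\xi$ supported in $(s-\tfrac12,s+\tfrac32)$ so as to apply the regularity theorems with zero initial data, which is the device that produces constants independent of $s$.
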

\begin{proof}
Let $s\ge 1$. We define $\tilde{n}_\ep(\cdot,t)=\nep(\cdot,t+s-1)$, $\tilde{c}_\ep(\cdot,t)=\cep(\cdot,t+s-1)$ and $\tilde{u}_\ep(\cdot,t)=\uep(\cdot,t+s-1)$. Let $\xi\in C^\infty((0,\infty))$ satisfy $\xi=0$ on $(0,\frac12)\cup(\frac52,\infty)$ and $\xi=1$ on $[1,2]$. We see that $\xi\tilde{n}_\ep$ is a weak solution of 
\[
(\xi\tilde{n}_\ep)_t-\nabla\cdot(\nabla(\xi\tilde{n}_\ep)-\xi\tilde{n}_\ep S_\ep(x,\tilde{n}_\ep,\tilde{c}_\ep)\nabla\tilde{c}_\ep)=\xi'\tilde{n}_\ep,\;\; t\in[0,\infty),\]
 associated with Neumann boundary condition and $\xi\tilde{n}_\ep(\cdot,\frac12)=0$.
Since \((\nabla(\xi\tilde{n}_\ep)-\xi\tilde{n}_\ep S_\ep(x,\tilde{n}_\ep,\tilde{c}_\ep)\nabla\tilde{c}_\ep-\tilde{n}_\ep\tilde{u}_\ep)\cdot\nabla(\xi\tilde{n}_\ep)> \frac12|\nabla(\xi\tilde{n}_\ep)|^2-\tilde{n}_\ep^2|S_\ep|^2|\nabla\tilde{c}_\ep|^2-\tilde{n}_\ep^2|\tilde{u}_\ep|^2\), we see together with the fact guaranteed in Lemma \ref{lem:ns_nablac} and Lemma \ref{lem:s_nablac},  that the norms of $\tilde{n}_\ep^2|S_\ep|^2|\nabla\tilde{c}_\ep|^2+\tilde{n}_\ep^2|\tilde{u}_\ep|^2$ and $\xi'\tilde{n}_\ep$ are bounded in $L^p((\frac12,\frac52);L^q(\O))$ for suitably large $p$ or $q$ and independent of $s$, thus Theorem 1.3 in \cite{holder} implies there is $\gamma_1\in(0,1)$ and $c_1>0$ such that
\[ \norm[ {C^{\gamma_1,\frac{\gamma_1}{2}} (\overline{\O}\times[s,s+1])}]{\nep}=\norm[ {C^{\gamma_1,\frac{\gamma_1}{2}} (\overline{\O}\times[1,2])}]{\tilde{n}_\ep}
\le\norm[ {C^{\gamma_1,\frac{\gamma_1}{2}} (\overline{\O}\times[\frac12,\frac52])}]{\xi\tilde{n}_\ep}\le c_1,\]
and $c_1$ depends on $\norm[L^\infty({\Om\times(\frac12,\frac25)})]{\xi\tilde{n}_\ep}$ and the norms of $\tilde{n}_\ep^2|S_\ep|^2|\nabla\tilde{c}_\ep|^2+\tilde{n}_\ep^2|\tilde{u}_\ep|^2$ in appropriate spaces only. A similar reasoning yields some $\gamma_2\in(0,1)$ and $c_2>0$ such that
\begin{align*}
\norm[C^{\gamma_2,\frac{\gamma_2}{2}}
{(\overline{\O}\times[s,s+1])}]
{\cep}\le c_2.
\end{align*}
The derivation of the regularity of $\uep$ is similar to \cite[Lemma 5.3]{cao_lan}.
Let $s\ge 1$, and $\xi_s$ be a smooth function: $(0,\infty)\to [0,1]$ satisfying $\xi_s(t)=0$ on $(0,s-\frac12)\cup(s+\frac32,\infty)$ and $\xi_s(t)=1$ on $[s,s+1]$. We consider $\xi\cdot\uep$, it satisfies
\[
(\xi\uep)_t=\xi_t \uep+\xi {\uep}_t=\Delta (\xi\uep)-\xi(\uep\cdot\nabla)\uep+\xi\nep\nabla\phi
+\xi'\uep, \mbox{ on }(s-\frac12,s+\frac32),
\]
with $\xi\uep(\cdot,0)=0$ and $\xi \uep=0$ on $\partial \O$. Thus by an application of \cite[Thm. 2.8]{giga_sohr}, for any $r\in(1,\infty)$, we deduces the existence of constant $C_r>0$ fulfilling
\[
\int_0^\infty\norm[L^r(\O)]{(\xi\uep)_t}^r
+\int_0^\infty\norm[L^r(\O)]{D^2(\xi\uep)}^r
\le C_r\left(0+
\int_0^\infty \norm[L^r(\O)]{\mathscr P \left((\xi\uep\cdot\nabla)\uep\right)+\mathscr P\left(\xi\nep\nabla\phi\right)+\mathscr P\left(\xi'\uep\right)}^rds
\right),
\]
which, due to the definition of $\xi$  and boundedness of $\uep$, $\nep$, $\xi'$ reads as
\[
\int_{s-\frac12}^{s+\frac32}
\norm[L^r(\O)]{(\xi\uep)_t}^r
+\int_{s-\frac12}^{s+\frac32}
\norm[L^r(\O)]{D^2(\xi\uep)}^r
\le C_1 \int_{s-\frac12}^{s+\frac32}
\norm[L^r(\O)]{\nabla (\xi\uep)}^r+C_2
\]
 for all $s\in(1,\infty)$ and for some $C_1>0$, $C_2>0$.
Let $a=\frac{1-\frac Nr}{2-\frac Nr}\in (0,1)$, the Gargliardo-Nirenberg inequality shows
\[\norm[L^r(\O)]{\nabla(\xi\uep)}^r\le C_3\norm[L^r(\O)]{D^2(\xi\uep)}^{ar}
\norm[\Lin]{(\xi\uep)}^{(1-a)r}.
\]
for some $C_3>0$.
Integrating the above inequality on $(s-\frac12,s+\frac32)$ and using Young's inequality yields that
\[
\int_{s-\frac12}^{s+\frac32}\norm[L^r(\O)]
{\nabla(\xi\uep)}^r\le C_4
\int_{s-\frac12}^{s+\frac32}
\norm[L^r(\O)]{D^2(\xi\uep)}^{ar}\\
\le\int_{s-\frac12}^{s+\frac32}
\left(\frac12\norm[L^r(\O)]{D^2(\xi\uep)}^r+C_5
\right).
\]
with some $C_4>0$, $C_5>0$ and for all $s\in[1,\infty)$.
Combining the above estimates we see that there is $C_6>0$ such that for all $s\ge 1$ and all $\ep\in(0,1)$,
\[
\int_s^{s+1}\norm[L^r(\O)]{(\uep)_t}^r+
\int_s^{s+1}\norm[L^r(\O)]{D^2\uep}^r\le C_6.
\]
Let $r\in(1,\infty)$ be sufficiently large, the embedding theorem implies the existence of $\gamma_3\in(0,1)$, $C>0$ such that
\begin{align*}
\norm[C^{1+\gamma_3,{\gamma_3}}{(\overline{\O}
\times[s,s+1])}]{\uep}\le C.
\end{align*}

Choosing $\gamma=\min\{\gamma_1,\gamma_2,\gamma_3\}$ we have proved (\ref{holder:n}-\ref{holder:u}). For all $\tau>0$, if we choose  $\xi_\tau\in C_0^\infty((0,\infty))$ in such way that $\xi_\tau=0$ on $(0,\tau)$ and $(\max\{3\tau,1\},\infty)$, and $\xi_\tau=1$ on $[\tau,\max\{2\tau,1\}]$. We consider the equations for $\xi_\tau\nep$, $\xi_\tau\cep$ and $\xi_\tau\uep$, then \eqref{holder} is obtained by the same reasoning as above. Actually, the way of $\gamma_i$ ($i=1,2$) depending on $\tau$ is through the non-decreasing dependence of the norms $\|{\xi\nep}\|_{L^\infty(\O\times[\tau,\max\{3\tau,1\}])}$, $\|{\xi\cep}\|_{L^\infty(\O\times[\tau,\max\{3\tau,1\}])}$ \cite[Theorem 1.3]{holder}, which are independent of $\tau$, thus we can choose the same $\gamma_i$ (i=1,2) as before. Moreover, $\gamma_3$ can be chosen in the same manner upon choosing the same $r$.

\end{proof}

\begin{lem}\label{lem:convergence}
Let $\gamma\in(0,1)$ be chosen as in Lemma \ref{lem:regularity1}. There exists $(\ep_j)_{j\in\mathbb{N}}\subset(0,1)$ such that $\ep_j\searrow 0$ as $j\to\infty$, and that as $\ep=\ep_j\searrow 0$, it holds that
\begin{align}
\label{con:nae}
 \nep\rightarrow n &\text{ in } C_{loc}^\gamma(\overline{\Om}\times(0,\infty)),\\
\label{con:nablan}
 \nabla\nep\rightharpoonup \nabla n &\text{ in }L^2(\Om\times(0,\infty)),\\
\label{con:cae}
 \cep\rightarrow c &\text{ in } C_{loc}^\gamma(\overline{\Om}\times(0,\infty)),\\
 \label{con:nablac}
 \nabla\cep\rightharpoonup \nabla c &\text{ in }L^2(\Om\times(0,\infty)),\\
\label{con:uae}
 \uep\rightarrow u &\text{ in } C_{loc}^\gamma(\overline{\Om}\times(0,\infty)),\\
 \label{con:nablau}
 \nabla\uep\rightarrow \nabla u &\text{ in }C_{loc}^\gamma(\Om\times(0,\infty)),\\
 \label{con:u}
 \uep\wstarto u &\text{ in }L^\infty((0,\infty);D(A^\alpha)),\\
\label{con:S}
 S_\ep(x,\nep(x,t),\cep(x,t)) \rightarrow S(x,n(x,t),c(x,t)) &\text{ a.e. in }  \Om\times(0,\infty).
\end{align}
\end{lem}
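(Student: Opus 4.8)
The plan is to extract the convergent subsequence by combining the uniform Hölder and Sobolev-type bounds from Lemma \ref{lem:regularity1} and Lemma \ref{lem:estimate1} with standard compactness arguments, proceeding component by component and diagonalizing over an exhausting sequence of compact subsets of $\overline{\Om}\times(0,\infty)$. Concretely, I would first fix a countable exhaustion $\overline{\Om}\times[\tau_k,T_k]$ of $\overline{\Om}\times(0,\infty)$ (with $\tau_k\searrow 0$, $T_k\nearrow\infty$). On each such slab, (\ref{holder:n}--\ref{holder:u}) give uniform $C^{\gamma',\gamma'/2}$ (resp.\ $C^{1+\gamma',\gamma'}$) bounds for any $\gamma'>\gamma$ in the second line of \eqref{holder}; the Arzel\`a--Ascoli theorem then yields, for each $k$, a subsequence along which $\nep,\cep$ converge in $C^{\gamma}(\overline{\Om}\times[\tau_k,T_k])$ and $\uep$ converges in $C^{1+\gamma,\gamma}$ of the same slab. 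A Cantor diagonal procedure over $k$ produces a single sequence $\ep_j\searrow 0$ along which (\ref{con:nae}), (\ref{con:cae}), (\ref{con:uae}) and (\ref{con:nablau}) hold in $C^\gamma_{loc}$, and the limits $n,c,u$ are automatically continuous and share the regularity of the bounds.

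Next I would upgrade the weak convergences. The $L^2$ bounds $\int_0^\infty\intO|\nabla\nep|^2<C$ and $\int_0^\infty\intO|\nabla\cep|^2<C$ from Lemma \ref{lem:estimate1}, together with the uniform $L^\infty$ bounds on $\nep,\cep$ from Proposition \ref{epboundedness}, mean that $\{\nabla\nep\}$ and $\{\nabla\cep\}$ are bounded in $L^2(\Om\times(0,\infty))$; after passing to a further subsequence (absorbed into $\ep_j$) we get $\nabla\nep\rightharpoonup g_1$ and $\nabla\cep\rightharpoonup g_2$ weakly in $L^2$, and since $\nep\to n$, $\cep\to c$ in the sense of distributions the limits must be $g_1=\nabla n$, $g_2=\nabla c$, giving (\ref{con:nablan}) and (\ref{con:nablac}). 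For (\ref{con:u}), the bound $\norm[L^2(\O)]{A^\alpha\uep(\cdot,t)}\le C$ uniformly in $t$ and $\ep$ (Proposition \ref{epboundedness}) places $\uep$ in the unit ball of $L^\infty((0,\infty);D(A^\alpha))$, which is the dual of $L^1((0,\infty);D(A^\alpha)^\ast)$; by Banach--Alaoglu there is a weak-$\ast$ convergent subsequence, and identifying the limit with the already-obtained $u$ (using that $D(A^\alpha)\hookrightarrow L^2$ and $\uep\to u$ locally uniformly) gives (\ref{con:u}). Finally, (\ref{con:S}) is immediate: $\rho_\ep\nearrow1$ pointwise by (\ref{2:roh_ep}), $\nep(x,t)\to n(x,t)$ and $\cep(x,t)\to c(x,t)$ for a.e.\ $(x,t)$ along $\ep_j$ by (\ref{con:nae})--(\ref{con:cae}), and $S$ is continuous in all its arguments by (\ref{1:SC2}), so $S_\ep(x,\nep,\cep)=\rho_\ep(x)S(x,\nep,\cep)\to S(x,n,c)$ a.e.

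The main obstacle is bookkeeping rather than anything deep: one must be careful that the same single sequence $\ep_j$ works simultaneously for all eight convergences and on all of $\overline{\Om}\times(0,\infty)$, which is handled by the diagonal argument once the a priori bounds are in hand; and one must check that the local uniform limit coincides with the weak/weak-$\ast$ limit wherever two modes of convergence are invoked for the same function (this follows from uniqueness of distributional limits). A minor point worth stating explicitly is that the Hölder bounds in Lemma \ref{lem:regularity1} are for exponent $\gamma$, so to get precompactness in $C^\gamma_{loc}$ one either uses a slightly larger Hölder exponent in the bound or notes that bounded sets of $C^{\gamma'}$ are precompact in $C^\gamma$ for $\gamma<\gamma'$; either way the stated conclusions (\ref{con:nae})--(\ref{con:S}) follow. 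No new estimates beyond those already established in Sections 3--5 and Lemma \ref{lem:estimate1} are needed.
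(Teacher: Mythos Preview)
Your proposal is correct and follows essentially the same approach as the paper: the H\"older bounds of Lemma~\ref{lem:regularity1} together with Arzel\`a--Ascoli and a diagonal argument give (\ref{con:nae}), (\ref{con:cae}), (\ref{con:uae}), (\ref{con:nablau}); the $L^2$ bounds of Lemma~\ref{lem:estimate1} give (\ref{con:nablan}) and (\ref{con:nablac}); and the continuity of $S$ together with the pointwise convergences gives (\ref{con:S}). Your write-up is in fact more complete than the paper's, which does not spell out the diagonal argument, does not explicitly address (\ref{con:u}), and does not comment on the $C^{\gamma'}$-versus-$C^\gamma$ precompactness issue that you correctly flag.
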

\begin{proof}
First (\ref{con:nae}), (\ref{con:cae}) and (\ref{con:uae}), (\ref{con:nablau}) are obtained from  Lemma \ref{lem:regularity1}. Lemma \ref{lem:estimate1} implies (\ref{con:nablan}) and (\ref{con:nablac}). Due to the obtained convergence \eqref{con:nae} and \eqref{con:cae} and the continuity of $S$, we conclude that (\ref{con:S}) holds.
\end{proof}

\begin{lem}\label{lem:isweaksol}
 The functions $n, c, u$ from Lemma \ref{lem:convergence} form a weak solution to \eqref{eq} in the sense of Definition \ref{def:1}.
\end{lem}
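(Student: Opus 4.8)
\textbf{Proof plan for Lemma \ref{lem:isweaksol}.}
The plan is to pass to the limit $\ep = \ep_j \searrow 0$ in each of the three weak formulations satisfied by the regularized solutions $(\nep,\cep,\uep,P_\ep)$, using the convergence properties collected in Lemma \ref{lem:convergence}. First I would record that, since $(\nep,\cep,\uep,P_\ep)$ is a classical solution of \eqref{ep}, testing the first three equations against arbitrary $\psi\in C_0^\infty(\overline{\O}\times[0,\infty);\mathbb{R})$ (for the $n$- and $c$-equations) and $\zeta\in C_{0,\sigma}^\infty(\O\times[0,\infty);\mathbb{R}^N)$ (for the $u$-equation) and integrating by parts in space and time yields, for every $\ep\in(0,1)$, the identities
\begin{align*}
-\int_0^\infty\intO \nep\psi_t-\intO n_0\psi(\cdot,0)&=-\int_0^\infty\intO \nabla \nep\cdot\nabla\psi+\int_0^\infty\intO \nep S_\ep(x,\nep,\cep)\cdot\nabla \cep\cdot\nabla\psi+\int_0^\infty\intO \nep\uep\cdot\nabla\psi,\\
-\int_0^\infty\intO \cep\psi_t-\intO c_0\psi(\cdot,0)&=-\int_0^\infty\intO \nabla \cep\cdot\nabla\psi-\int_0^\infty\intO \nep\cep\psi+\int_0^\infty\intO \cep\uep\cdot\nabla\psi,\\
-\int_0^\infty\intO \uep\cdot\zeta_t-\intO u_0\cdot\zeta(\cdot,0)&=-\int_0^\infty\intO \nabla \uep\cdot\nabla\zeta+\int_0^\infty\intO (\uep\cdot\nabla) \uep\cdot\zeta+\int_0^\infty\intO \nep\nabla \phi\cdot\zeta,
\end{align*}
the pressure having been eliminated by the solenoidal choice of test functions. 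Note also that the regularity class required by Definition \ref{def:1} follows from Proposition \ref{epboundedness} together with Lemma \ref{lem:estimate1} and the lower semicontinuity of norms under (weak) convergence.

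Next I would pass to the limit term by term, exploiting that each $\psi,\zeta$ is compactly supported in time, so that all integrals reduce to a fixed finite time interval $(0,T)$. The linear terms not involving a gradient of the unknown — $\int \nep\psi_t$, $\int \cep\psi_t$, $\int \uep\cdot\zeta_t$, and $\int \nep\nabla\phi\cdot\zeta$ — converge by the locally uniform convergences \eqref{con:nae}, \eqref{con:cae}, \eqref{con:uae}. The diffusion terms $\int\nabla\nep\cdot\nabla\psi$ and $\int\nabla\cep\cdot\nabla\psi$ converge by the weak $L^2$ convergences \eqref{con:nablan}, \eqref{con:nablac}, while $\int\nabla\uep\cdot\nabla\zeta$ converges by \eqref{con:nablau}. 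For the transport terms $\int\nep\uep\cdot\nabla\psi$ and $\int\cep\uep\cdot\nabla\psi$, one writes $\nep\uep - nu = (\nep-n)\uep + n(\uep-u)$ and uses the uniform bounds from Proposition \ref{epboundedness} together with \eqref{con:nae}, \eqref{con:uae} (similarly for $\cep\uep$); for the Navier–Stokes nonlinearity $\int(\uep\cdot\nabla)\uep\cdot\zeta$ I would use \eqref{con:uae} for $\uep$ and \eqref{con:nablau} for $\nabla\uep$, both being locally uniform. The term $\int\nep\cep\psi$ converges by \eqref{con:nae} and \eqref{con:cae}.

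The one genuinely delicate term is the chemotactic flux $\int_0^\infty\intO \nep S_\ep(x,\nep,\cep)\cdot\nabla\cep\cdot\nabla\psi$, since $\nabla\cep$ only converges weakly while $S_\ep$ converges merely almost everywhere. The strategy here is to split
\[
\nep S_\ep(x,\nep,\cep)\cdot\nabla\cep - n S(x,n,c)\cdot\nabla c = \bigl(\nep S_\ep(x,\nep,\cep) - n S(x,n,c)\bigr)\cdot\nabla\cep + n S(x,n,c)\cdot(\nabla\cep-\nabla c).
\]
For the second summand, since $nS(x,n,c)\nabla\psi\in L^2(\O\times(0,T);\mathbb{R}^N)$ (here $n$ is bounded, $S$ is bounded by $S_0(\norm[\Lin]{c_0})$ via \eqref{1:S}, and $\nabla\psi$ is smooth and compactly supported), the weak convergence \eqref{con:nablac} of $\nabla\cep$ applies directly. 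For the first summand, I would argue that $\nep S_\ep(x,\nep,\cep) - nS(x,n,c) \to 0$ almost everywhere on $\O\times(0,T)$ by \eqref{con:nae} and \eqref{con:S}, while being dominated in $L^\infty$ by a constant (via \eqref{2:Sepnorm} and the uniform bound on $\nep$); the dominated convergence theorem then gives $\nep S_\ep(x,\nep,\cep) - nS(x,n,c) \to 0$ strongly in $L^2(\O\times(0,T);\mathbb{R}^{N\times N})$, which combines with the weak $L^2$ boundedness of $\nabla\cep$ from \eqref{con:nablac} to kill the first summand. Collecting all the limits yields exactly the three identities \eqref{weaksol}, so that $(n,c,u)$ is a weak solution in the sense of Definition \ref{def:1}; the pressure $P$ is recovered afterwards, e.g.\ via de Rham's theorem, with the stated regularity $P\in L^2((0,T);W^{1,2}(\O))$ following from the regularity of the other terms in the $u$-equation. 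I expect the handling of this chemotactic term — in particular justifying the strong $L^2$ convergence of $\nep S_\ep(x,\nep,\cep)$ so that it can be paired against the only weakly convergent $\nabla\cep$ — to be the main point requiring care.
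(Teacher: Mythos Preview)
Your proposal is correct and follows exactly the paper's approach; the paper's own proof is a two-line sketch (``test against \eqref{ep} and use Lemma \ref{lem:convergence} to take the limit in each integral''), and you have supplied the term-by-term details that the paper omits, including the weak--strong pairing for the chemotactic flux. One small point worth noting: the locally uniform convergences \eqref{con:nae}, \eqref{con:cae}, \eqref{con:uae} hold only on $\overline{\Om}\times(0,\infty)$, i.e.\ away from $t=0$, so for the time integrals near $t=0$ you should invoke the uniform $L^\infty$ bounds from Proposition \ref{epboundedness} together with dominated convergence rather than uniform convergence alone --- but you already cite those bounds, so this is just a matter of phrasing.
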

\begin{proof}
Take $\psi$ and $\zeta$ as specified in Definition \ref{def:1} and test them to (\ref{ep}). Lemma \ref{lem:convergence} allow us to take limit in each integral, thus we obtain the weak formulation.
\end{proof}

\begin{lem}\label{lem:regularity2}
 The functions $n,c,u$ from the previous lemma satisfy
\begin{align}\label{classical_regularity}
  n\in C^{2+\gamma,1+\frac\gamma2}_{loc}(\overline{\Om}\times(0,\infty)),\;
  c\in C^{2+\gamma,1+\frac\gamma2}_{loc}(\overline{\Om}\times(0,\infty)),\;
  u\in C^{2+\gamma,1+\frac\gamma2}_{loc}(\overline{\Om}\times(0,\infty))
 \end{align}
 for some $\gamma\in(0,1)$. Moreover, let $s\ge 1$.There is a constant $C>0$ such that
 \begin{align}\label{c2+al}
 \norm[{C^{2+\gamma,1+\frac\gamma2}(\overline{\Om}\times[s,s+1])}]{n}\le C,\;
 \norm[{C^{2+\gamma,1+\frac\gamma2}(\overline{\Om}\times[s,s+1])}]{c}\le C,\;
 \norm[{C^{2+\gamma,1+\frac\gamma2}(\overline{\Om}\times[s,s+1])}]{u}\le C
 \end{align}
 for all $t\ge 1$.
\end{lem}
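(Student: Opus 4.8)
The plan is to bootstrap from the H\"older regularity already established in Lemma \ref{lem:regularity1} to the full $C^{2+\gamma,1+\frac\gamma2}_{loc}$ regularity by a standard Schauder argument, treating each equation in \eqref{eq} as a linear scalar (or Stokes) parabolic problem with coefficients and right-hand sides that, by Lemma \ref{lem:regularity1} together with Lemma \ref{lem:estimate1} and the boundedness statements from Propositions \ref{epboundedness}--\ref{prop:bddn}, are already H\"older continuous. The estimates \eqref{c2+al} will be obtained uniformly on slabs $\overline\Om\times[s,s+1]$ by the same cut-off trick used in Lemma \ref{lem:regularity1}: multiply by a temporal cut-off $\xi$ supported in $(s-\tfrac12,s+\tfrac32)$, equal to $1$ on $[s,s+1]$, so that the shifted function vanishes initially and the interior/boundary Schauder estimate applies with constants independent of $s$ (and of the regularization, since we work directly with the limit $(n,c,u)$, whose coefficients no longer contain $S_\ep$).

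The steps, in order. First I would treat the $c$-equation: $c_t=\Delta c-nc-u\cdot\nabla c$ with homogeneous Neumann data. By Lemma \ref{lem:regularity1} we already have $n,c\in C^{\gamma,\gamma/2}_{loc}$ and $u\in C^{1+\gamma,\gamma}_{loc}$, so the right-hand side $-nc-u\cdot\nabla c$ and the (trivial) coefficients are in $C^{\gamma,\gamma/2}_{loc}$; the linear parabolic Schauder estimate up to the boundary (e.g.\ Ladyzhenskaya--Solonnikov--Ural'tseva) gives $c\in C^{2+\gamma,1+\gamma/2}_{loc}(\overline\Om\times(0,\infty))$ with the uniform slab bound. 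Second, the $u$-equation (Stokes, since $\kappa=0$ in the $N=3$ case and for $N=2$ the Navier--Stokes term $(u\cdot\nabla)u$ is already H\"older by Lemma \ref{lem:regularity1}): the forcing $n\nabla\phi$ — resp.\ $-\kappa(u\cdot\nabla)u+n\nabla\phi$ — lies in $C^{\gamma,\gamma/2}_{loc}$ since $n\in C^{\gamma,\gamma/2}_{loc}$, $\phi\in W^{1,\infty}$ (here one uses that $\nabla\phi$ need not be H\"older — if $\phi\notin C^{1,\gamma}$, apply $L^r$-maximal regularity as in Lemma \ref{lem:regularity1} first to upgrade $u$ to $C^{1+\gamma,\gamma}$, which is exactly what was done there, and then Schauder once the forcing is known to be H\"older; alternatively one simply assumes $\phi\in C^{1+\gamma}$ as is customary). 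The Schauder (or Solonnikov) theory for the Stokes system then yields $u\in C^{2+\gamma,1+\gamma/2}_{loc}$ and a corresponding bound for the pressure gradient. Third, return to the $n$-equation, written in non-divergence form
\begin{align}\label{ndivform}
n_t=\Delta n-S(x,n,c)\nabla c\cdot\nabla n-\big(\nabla\cdot(nS(x,n,c))\big)\cdot\nabla c+\text{lower order}-u\cdot\nabla n;
\end{align}
now that $c\in C^{2+\gamma,1+\gamma/2}_{loc}$ and $u\in C^{2+\gamma,1+\gamma/2}_{loc}$, the drift coefficient $-S(x,n,c)\nabla c-u$ and the zeroth-order/forcing terms (involving $\nabla^2 c$, $\nabla c$, and the $C^2$ dependence of $S$ on its arguments via \eqref{1:SC2}) are in $C^{\gamma,\gamma/2}_{loc}$, so the Schauder estimate for the (conormal-boundary) parabolic problem gives $n\in C^{2+\gamma,1+\gamma/2}_{loc}$ with the uniform slab bound. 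Finally I would assemble \eqref{classical_regularity} and \eqref{c2+al}, noting $\gamma$ may have to be decreased once more to the minimum of the exponents produced by the three Schauder steps.

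The main obstacle is the first equation: its divergence structure mixes a genuine second-order nonlinearity in $c$ (through $\nabla\cdot(nS\nabla c)$, which after expansion involves $\nabla n\cdot\nabla c$ and $n\,\Delta c$ as well as the $x,n,c$-derivatives of $S$) with the chemotactic drift, so one must be careful to first secure $c\in C^{2+\gamma,1+\gamma/2}$ and then quote a Schauder estimate that is compatible with the conormal (no-flux) boundary condition $(\nabla n-S\nabla c)\cdot\nu=0$ rather than pure Neumann — since $S_\ep$ (and hence the analogous boundary term) vanishes on $\partial\Om$ at the approximate level, the limiting boundary operator is the conormal one and the relevant boundary-regularity theory (oblique-derivative/conormal parabolic Schauder) applies; this is also the reason, as the paper emphasizes, that convexity of $\Om$ is not needed. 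A minor technical point is the regularity of $\phi$: with only $\phi\in W^{1,\infty}$ one cannot directly invoke Schauder for $u$, so one either strengthens this to $\phi\in C^{1+\gamma}$ or runs the $L^r$-maximal-regularity plus embedding argument of Lemma \ref{lem:regularity1} to reach $u\in C^{1+\gamma,\gamma}$ before applying Schauder to the remaining equations. I expect the whole argument to be essentially the same bootstrap as in \cite[Lemma 5.4]{cao_lan}, so the proof can be kept to a sketch referring to that source.
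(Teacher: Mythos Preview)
Your proposal is correct and follows essentially the same route as the paper: the same temporal cut-off trick is used on each of the three equations in the order $c\to u\to n$, with linear parabolic (resp.\ Stokes) Schauder theory invoked once the coefficients and forcing are known to be H\"older from Lemma~\ref{lem:regularity1}. The only cosmetic difference is that the paper upgrades $n$ in two steps (first $C^{1+\gamma,\frac{1+\gamma}{2}}$ via \cite[Thm~4.8]{liberman}, then $C^{2+\gamma,1+\frac{\gamma}{2}}$ via \cite[Thm~4.9]{liberman}) to handle the oblique boundary condition, whereas you pass directly to the non-divergence form and quote the conormal Schauder estimate; and you are in fact more explicit than the paper about the $\phi\in W^{1,\infty}$ versus $\phi\in C^{1+\gamma}$ issue for the $u$-equation.
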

\begin{proof}
First taking $\xi_s$ as chosen in Lemma \ref{lem:regularity1}, we see that $\xi \cdot c$ is a weak solution of $(\xi c)_t-\Delta (\xi c)+n(\xi c)+u\cdot\nabla (\xi c)-\xi' c=0$ on $t\in(s-\frac12,s+\frac32)$ associated with Neumann boundary condition and $\xi c(\cdot,s-\frac12)=0$. First \cite[Thm 5.3]{lady} grantees that $\xi c\in C^{2+\gamma,1+\frac{\gamma}{2}}(\overline{\O}\times[s-\frac12,s+\frac32])$, therefore, \cite[Thm 4.9]{liberman} shows that the norm \(\norm[{C^{2+\gamma,1+\frac{\gamma}{2}}(\overline{\O}\times[s-\frac12,s+\frac32])}]{\xi c}\) is controlled by the
corresponding H\"older norms of $n$ and $u$, which is bounded by Lemma \ref{lem:regularity1} and Lemma \ref{lem:convergence}.

For the regularity of $n$, we improve it similarly as $c$ but more carefully since its boundary condition also involving $c$. We first estimate its $C^{1+\gamma,\frac{1+\gamma}{2}}$ norm, then its $C^{2+\gamma,1+\frac{\gamma}{2}}$ norm, by  \cite[Thm 4.8]{liberman} and \cite[Thm 4.9]{liberman}, respectively.

For the regularity of $u$, we again consider $\xi\cdot u$, which satisfies \((\xi u)_t=\Delta (\xi u)-\xi(u\cdot\nabla)u+\xi n\nabla\phi+\xi'u,\) with Dirichlet boundary condition. Lemma \ref{lem:estimate1} already ensures the $C^{\gamma,\frac{\gamma}{2}}$ bound on the right hand side. Thus \cite[Thm 1.1]{solonnikov} together with the uniqueness guaranteed in \cite[Thm.V.1.5.1]{sohr} implies the \eqref{c2+al}.

For any fixed $\tau>0$, by choosing $\xi_\tau\in C_0^\infty((0,\infty))$ such that $\xi_\tau=0$ on $(0,\tau)$ and $(3\tau,\infty)$, and $\xi_\tau=1$ on $[\tau,2\tau]$, we can see \eqref{classical_regularity} holds by the same reasoning as introduced above.
\end{proof}

Having in hand the regularity for the weak solution \((n,c,u)\)  of (\ref{eq}), we have shown that it is actually classical solution.

\section{Stabilization}
In this section, we prove large time convergence for each component of the solution but not the approximated one.
Since we have already derived that the solution is globally bounded, then has uniform in time regularity in H\"older space. In order to make the convergence property from the previous section applicable, we have to gain some uniform smooth regularity.

The first obtained convergence of $n$ is crucial, it will imply convergence for $c$ and $u$ later.
\begin{lem}\label{lem:nablan}
Let $(n,c,u,P)$ be the classical bounded solution of (\ref{eq}). We have
\[
\int_0^\infty\intO |\nabla n|^2<\infty.
\]
\end{lem}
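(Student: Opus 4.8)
The plan is to derive a differential inequality for $\int_\Omega n\ln n$ (or more robustly, for a power of $n$) that produces $\int_0^\infty\int_\Omega|\nabla n|^2$ as a controlled quantity, exploiting that all quantities appearing are already known to be globally bounded. First I would test the first equation of \eqref{eq} with $\ln n + 1$ (after noting $n>0$, which holds since $n_\ep>0$ and the convergence is locally uniform away from $t=0$; alternatively one works on $[\tau,\infty)$ and sends $\tau\to 0$). The diffusion term yields $-\int_\Omega|\nabla n|^2/n$, the transport term $\int_\Omega u\cdot\nabla n\,(\ln n+1) = \int_\Omega u\cdot\nabla(n\ln n)=0$ by $\nabla\cdot u=0$, and the chemotactic term becomes $\int_\Omega S(x,n,c)\nabla c\cdot\nabla n / n \cdot n = \int_\Omega S(x,n,c)\nabla c\cdot\nabla n$, wait — more precisely $\int_\Omega nS\nabla c\cdot\nabla(\ln n) = \int_\Omega S\nabla c\cdot\nabla n$. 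Using $|S|\le S_0(c)\le S_0(\|c_0\|_{L^\infty})$ from \eqref{1:S} and \eqref{2:c-Lp}, Young's inequality absorbs half of $\int_\Omega|\nabla n|^2/n$ and leaves a term $C\int_\Omega|\nabla c|^2$ on the right. This gives
\[
\frac{d}{dt}\int_\Omega n\ln n + \frac12\int_\Omega\frac{|\nabla n|^2}{n}\le C\int_\Omega|\nabla c|^2.
\]

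Next I would integrate this over $(0,T)$. The left boundary term $\int_\Omega n_0\ln n_0$ is finite since $n_0\in L^\infty$ (one may need $n_0\ln n_0\in L^1$, which follows from $n_0\ge 0$ bounded; if $n_0$ vanishes somewhere the integrand is still integrable as $t\ln t\to 0$). The term $-\int_\Omega n(\cdot,T)\ln n(\cdot,T)$ is bounded below because $n$ is globally bounded in $L^\infty$ (Theorem~\ref{th1}), so $n\ln n\ge -1/e$ pointwise and hence $\int_\Omega n\ln n \ge -|\Omega|/e$ uniformly in $T$. The right-hand side is controlled by $\int_0^\infty\int_\Omega|\nabla c|^2 < \infty$, which is exactly \eqref{est:nablac} from Lemma~\ref{lem:estimate1} (after passing to the limit $\ep\to 0$, using weak lower semicontinuity of the $L^2$ norm under \eqref{con:nablac}; or one simply runs the whole computation at the approximate level and takes $\ep\to 0$ at the end). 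Combining, $\int_0^T\int_\Omega|\nabla n|^2/n \le C$ uniformly in $T$, and since $n$ is bounded above, $\int_0^\infty\int_\Omega|\nabla n|^2 \le \|n\|_{L^\infty}\int_0^\infty\int_\Omega|\nabla n|^2/n < \infty$.

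The main obstacle I anticipate is purely bookkeeping: justifying the integration-by-parts and the testing procedure rigorously, which is cleanest if carried out on the regularized system \eqref{ep} where everything is smooth and $n_\ep>0$, and then passing to the limit — but there one must handle the fact that $S_\ep$ carries the cutoff $\rho_\ep$, which only helps (it does not worsen the bound $|S_\ep|\le S_0(\|c_0\|_{L^\infty})$ in \eqref{2:Sepnorm}). A minor subtlety is the behavior near $t=0$ where $n$ is merely $C^0_{loc}(\overline\Omega\times(0,\infty))$ rather than up to $t=0$ in $C^2$; this is handled by working on $(\tau,\infty)$, using the now time-uniform Hölder and Schauder bounds from Lemmas~\ref{lem:regularity1}--\ref{lem:regularity2} to control the boundary term at $t=\tau$ uniformly, and letting $\tau\searrow 0$ with the help of $n_0\in L^\infty$. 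No genuinely new estimate is needed beyond \eqref{est:nablac} and the global $L^\infty$ bound on $n$.
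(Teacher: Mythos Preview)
Your argument is correct but the paper's proof is a one-liner taking a different, shorter route. The paper simply quotes the uniform-in-$\ep$ bound \eqref{est:nablan}, namely $\int_0^\infty\int_\Omega|\nabla n_\ep|^2\le C$ (which was already obtained in Lemma~\ref{lem:estimate1} as the $p=2$ case of the weighted estimate \eqref{3:2} in Lemma~\ref{lem:Lp}), and then invokes the weak convergence $\nabla n_\ep\rightharpoonup\nabla n$ in $L^2(\Omega\times(0,\infty))$ from \eqref{con:nablan} together with weak lower semicontinuity of the norm.

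Your entropy approach via $\int_\Omega n\ln n$ rederives the dissipation estimate directly for the limit solution rather than inheriting it from the approximants. The computation is correct (note the Young remainder is really $C\int_\Omega n|\nabla c|^2$, which your appeal to the $L^\infty$ bound on $n$ reduces to $C\int_\Omega|\nabla c|^2$, so the stated inequality is fine). What your route buys: it needs only the \emph{outputs} of Theorem~\ref{th1} --- boundedness of $n$ and finiteness of $\int_0^\infty\int_\Omega|\nabla c|^2$ --- and would transplant unchanged to any setting where a bounded classical solution is already in hand, without reference to the weighted-$\varphi(c_\ep)$ machinery or the smallness of $c_0$. What the paper's route buys: brevity, since the needed space-time bound on $\nabla n_\ep$ was produced anyway in the course of proving boundedness. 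One small caveat in your version: the inequality $|\nabla n|^2\le\|n\|_{L^\infty}\,|\nabla n|^2/n$ presupposes $n>0$, which does not follow from $n_\ep>0$ plus locally uniform convergence alone; it does follow from the strong maximum principle applied to the classical solution $n$ for $t>0$, or --- as you yourself note --- by running the whole computation at the $\ep$-level and passing to the limit, at which point you have essentially reproduced the paper's proof.
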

\begin{proof}
The statement holds due to (\ref{est:nablan}) and (\ref{con:nablan}).
\end{proof}
\begin{lem}\label{lem:con:n}
Let $(n,c,u,P)$ be the classical solution of (\ref{eq}), we have
\begin{align}\label{ninfty'}
\|(n(\cdot,t)-\bar{n}_0)\|_{\Li}\to 0,\text{ as }t\to\infty.
\end{align}
\end{lem}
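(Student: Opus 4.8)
The plan is to show $L^2$-convergence of $n(\cdot,t)$ to its mean $\bar n_0$ first, and then upgrade to $L^\infty$ using the uniform Hölder bound from Lemma~\ref{lem:regularity2}. For the $L^2$ step, note that by mass conservation \eqref{2:mass} we have $\frac{1}{|\Om|}\intO n(\cdot,t)=\bar n_0$ for all $t>0$, so the Poincar\'e--Wirtinger inequality gives $\|n(\cdot,t)-\bar n_0\|_{L^2(\O)}^2\le C_P\intO|\nabla n(\cdot,t)|^2$. By Lemma~\ref{lem:nablan} we know $\int_0^\infty\intO|\nabla n|^2<\infty$, hence $\int_0^\infty\|n(\cdot,t)-\bar n_0\|_{L^2(\O)}^2\,dt<\infty$. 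This integrability alone does not force pointwise-in-$t$ decay, so I would combine it with a uniform continuity argument: the function $t\mapsto\|n(\cdot,t)-\bar n_0\|_{L^2(\O)}^2$ is uniformly continuous on $(1,\infty)$ because $n\in C^{\gamma,\gamma/2}_{loc}$ with a bound on $[s,s+1]$ uniform in $s\ge 1$ (Lemma~\ref{lem:regularity2}, actually already Lemma~\ref{lem:regularity1}). A nonnegative uniformly continuous function whose integral over $(1,\infty)$ is finite must tend to $0$; therefore $\|n(\cdot,t)-\bar n_0\|_{L^2(\O)}\to 0$ as $t\to\infty$.

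The second step is to bootstrap from $L^2$ to $L^\infty$ convergence via interpolation against the uniform Hölder bound. For $\theta\in(0,1)$ chosen appropriately (depending on $N$ and $\gamma$), the Gagliardo--Nirenberg-type interpolation inequality
\begin{align*}
\|n(\cdot,t)-\bar n_0\|_{L^\infty(\O)}\le C\,\|n(\cdot,t)-\bar n_0\|_{C^{\gamma}(\overline\O)}^{\theta}\,\|n(\cdot,t)-\bar n_0\|_{L^2(\O)}^{1-\theta}
\end{align*}
holds, and since the first factor on the right is bounded uniformly in $t\ge 1$ by \eqref{holder:n}, the convergence $\|n(\cdot,t)-\bar n_0\|_{L^2(\O)}\to 0$ immediately yields $\|n(\cdot,t)-\bar n_0\|_{L^\infty(\O)}\to 0$. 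This establishes \eqref{ninfty'}.

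The main obstacle is purely the passage from "$\int_0^\infty\|n(\cdot,t)-\bar n_0\|_{L^2}^2<\infty$" to genuine decay as $t\to\infty$: finite time-integral does not by itself preclude a sequence of thin tall spikes. The resolution relies crucially on the \emph{uniform-in-time} Hölder estimate from Lemma~\ref{lem:regularity1}, which controls the modulus of continuity of $n$ in $t$ on every unit interval $[s,s+1]$ independently of $s$, so that spikes of this kind are excluded and a standard "uniformly continuous $L^1(0,\infty)$ function vanishes at infinity" lemma applies. Everything else --- Poincar\'e, interpolation, mass conservation --- is routine. I would also remark that this lemma is exactly the starting point for the subsequent convergence statements for $c$ and $u$: the decay of $n$ feeds into the second and third equations (the $nc$ term and the $n\nabla\phi$ forcing) to drive $c\to 0$ and $u\to 0$, presumably via the semigroup estimates \eqref{fraction} and Lemma~\ref{lem:integralestimates} together with the cited result from \cite{cao_lan}.
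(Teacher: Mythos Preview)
Your proof is correct and relies on the same two pillars as the paper's argument --- Lemma~\ref{lem:nablan} and the uniform-in-time H\"older estimate --- but the route is different. The paper argues by contradiction and compactness: assuming a sequence $t_k\to\infty$ along which $\|n(\cdot,t_k)-\bar n_0\|_{L^\infty(\O)}\ge c_1$, it uses the uniform $C^{2+\gamma,1+\gamma/2}$ bound from Lemma~\ref{lem:regularity2} and Arzel\`a--Ascoli to pass to a limit $n_\infty\in C^1(\overline\O\times[0,1])$ of the shifted functions $g_k(x,s)=n(x,s+t_k)$; since $\int_0^\infty\intO|\nabla n|^2<\infty$ forces $\int_0^1\intO|\nabla g_{k_j}|^2\to 0$, the limit is spatially constant, and mass conservation identifies it as $\bar n_0$, contradicting the assumption.

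Your direct approach --- Poincar\'e--Wirtinger to get $\int_0^\infty\|n-\bar n_0\|_{L^2(\O)}^2<\infty$, then the ``nonnegative, uniformly continuous, integrable $\Rightarrow$ vanishes at infinity'' lemma, then $C^\gamma$--$L^2$ interpolation --- is somewhat more elementary in that it avoids the compactness/subsequence machinery, and it makes explicit the quantitative step (Poincar\'e) that in the paper's version is hidden inside the identification of the constant limit. The paper's compactness argument, on the other hand, needs only the qualitative decay $\int_{t_k}^{t_k+1}\intO|\nabla n|^2\to 0$ and does not invoke an interpolation inequality, trading one standard tool for another. Either variant proves the lemma with no gaps.
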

\begin{proof}
Suppose on contrary that there are $c_1>0$ and a sequence $t_k\to\infty$ such that
\begin{align}\label{contract}
\norm[L^\infty(\O)]{n(\cdot,t_k)-\bar{n}_0}>c_1\;\;\mbox{ for } k\in \mathbb{N}.
\end{align}
Now we define
\[g_k(x,s)=n(x,s+t_k),\;(x,s)\in\overline{\O}\times[0,1].
\]
By the regularity guaranteed in Lemma \ref{lem:regularity2}, we see that for all $k\in \mathbb{N}$, there is $c_2>0$ such that
\[
\norm[C^{2+\alpha,1+\frac{\alpha}{2}}{(\overline{\O}\times[0,1])}]
{g_k}\le c_2.
\]
The Arzel\'a-Ascoli theorem implies that $g_k$ is relatively compact in $C^1(\overline{\O}\times[0,1])$. Thus we can find a subsequence $\{g_{k_j}\}_j$ and $n_\infty\in C^1(\overline{\O}\times[s,s+1])$ such that

\begin{align}\label{g_kj}
g_{k_j}\to n_\infty \in C^1(\overline{\O}\times[0,1]), \mbox{ as }j\to\infty.
\end{align}
It is left to show $n_\infty=\bar{n}_0$.
We see from Lemma \ref{lem:nablan} that
\[
\int_0^1\intO |\nabla g_{k_j}|^2\to 0 \mbox{ as } j\to \infty,
\]
which combined with (\ref{g_kj}) implies
\[
\int_0^1\intO |\nabla n_\infty|^2= 0.
\]
Since $n_\infty\in C^1(\overline{\O}\times[0,1])$, we deduce that \(n_\infty\equiv L \) with $L\in\mathbb{R}$. Moreover, we have
 \[|\O|\cdot L=\int_0^1\intO n_\infty=\mathop{\lim}\limits_{j\to\infty}
\int_0^1\intO f_{k_j}=\int_0^1\intO n_0.\]
Thus we conclude \(n_\infty\equiv \bar{n}_0\). This contradicts (\ref{contract}) by the definition of $n_\infty$.
\end{proof}

\begin{lem}\label{lem:con:c}
Let $(n,c,u,P)$ be the classical bounded solution of (\ref{eq}).
For any $0<\eta<\bar{n}_0$, there is $C>0$ such that
\begin{align}
\|c(\cdot,t)\|_{\Li}\le C e^{-\eta t}\;\; \text{ for all } t\ge 0.
\end{align}
\end{lem}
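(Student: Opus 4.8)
The plan is to exploit the decay already available for $n$ toward $\bar n_0>0$ together with the dissipative structure of the $c$-equation. First I would fix $\eta\in(0,\bar n_0)$ and invoke Lemma~\ref{lem:con:n}: since $\|n(\cdot,t)-\bar n_0\|_{L^\infty(\Om)}\to 0$ as $t\to\infty$, there is a time $t_0\ge 0$ such that $n(x,t)\ge \eta$ for all $x\in\Om$ and all $t\ge t_0$. On $[t_0,\infty)$ the second equation in \eqref{eq} then gives, by nonnegativity of $c$ and the pointwise lower bound on $n$, the differential inequality $c_t\le \Delta c-\eta c-u\cdot\nabla c$ in $\Om\times(t_0,\infty)$ with homogeneous Neumann data.

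The key step is to turn this into an $L^\infty$ decay estimate. The cleanest route is a comparison/maximum-principle argument: set $w(x,t):=\|c(\cdot,t_0)\|_{L^\infty(\Om)}\,e^{-\eta (t-t_0)}$, a spatially homogeneous supersolution of $w_t=\Delta w-\eta w-u\cdot\nabla w$ (note $\Delta w=0$, $\nabla w=0$), and apply the comparison principle for the linear parabolic operator $\partial_t-\Delta+\eta+u\cdot\nabla$ under Neumann boundary conditions to conclude $0\le c(x,t)\le w(x,t)$ for all $t\ge t_0$. Hence $\|c(\cdot,t)\|_{L^\infty(\Om)}\le \|c(\cdot,t_0)\|_{L^\infty(\Om)}e^{-\eta(t-t_0)}$ for $t\ge t_0$. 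Finally, on the bounded interval $[0,t_0]$ one has $\|c(\cdot,t)\|_{L^\infty(\Om)}\le\|c_0\|_{L^\infty(\Om)}$ by Lemma~\ref{lem:nc}, so enlarging the constant to absorb the factor $e^{\eta t_0}$ and the behaviour on $[0,t_0]$ yields $\|c(\cdot,t)\|_{L^\infty(\Om)}\le C e^{-\eta t}$ for all $t\ge 0$, with $C:=\max\{\|c_0\|_{L^\infty(\Om)},\,\|c(\cdot,t_0)\|_{L^\infty(\Om)}e^{\eta t_0}\}$.

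An alternative to the comparison principle, should one prefer to avoid invoking it for a drift term with merely H\"older-continuous divergence-free $u$, is a variation-of-constants argument: write $c(\cdot,t)=e^{(t-t_0)\Delta}c(\cdot,t_0)-\int_{t_0}^t e^{(t-s)\Delta}\big(n(\cdot,s)c(\cdot,s)+u(\cdot,s)\cdot\nabla c(\cdot,s)\big)\,ds$, bound $n c\ge \eta c$ to generate the exponential, and control the transport term using the $L^\infty$ bound on $u$ from Lemma~\ref{lem:boundu}/\ref{lem:s_bddu} and the $W^{1,q}$ bound on $c$; a Gronwall-type absorption then gives the same decay. The main obstacle, such as it is, is purely bookkeeping: one must be careful that the time $t_0$ after which $n\ge\eta$ depends on $\eta$, and hence the constant $C$ depends on $\eta$ (which is exactly what the statement allows), and one must patch the estimate on $[0,t_0]$ with the one on $[t_0,\infty)$ into a single clean exponential bound valid for all $t\ge0$.
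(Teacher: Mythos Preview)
Your proposal is correct and follows essentially the same route as the paper: use Lemma~\ref{lem:con:n} to find a time after which $n\ge\eta$, then apply the maximum/comparison principle to the resulting differential inequality $c_t\le \Delta c-\eta c-u\cdot\nabla c$ and patch with the trivial bound on the initial interval. Your write-up is in fact more careful than the paper's, which compresses the comparison step and the choice of $C$ into a single line.
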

\begin{proof}
For all $0<\eta<\bar{n}_0$, we can find $T>0$ such that
\begin{align}\nn
n\ge \eta\;\; \text{ for all } t\ge T.
\end{align}
Thus the second equation of (\ref{eq}) can be written as
\begin{align}\nn
{c}_t\le \Delta c-\eta c-u\cdot\nabla c\;\;\text{ for all } t\ge T.
\end{align}
The maximum principle yields that
\begin{align}\nn
\|c(\cdot,t)\|_{\Li}\le \|c(\cdot,T)\|_{\Li} e^{-\eta t}\;\; \text{ for all } t\ge T.
\end{align}
An obvious choice of $C$ completes the proof.
\end{proof}

\begin{lem}\label{lem:con:nablac}
Let $(n,c,u,P)$ be the classical bounded solution of (\ref{eq}), and let $0<\eta<\min\{\bar{n}_0,\lambda_1\}$, then we can find some $C>0$ such that
\begin{align}
\|\nabla c(\cdot,t)\|_{\Li}\le C e^{-\eta t}\;\;\text{ for all } t\ge 0.
\end{align}
\end{lem}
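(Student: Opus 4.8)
The plan is to derive the $L^\infty$ bound on $\nabla c$ from the already established exponential decay of $c$ itself (Lemma \ref{lem:con:c}) together with the smoothing action of the Neumann heat semigroup applied to the variation-of-constants representation of $c$. Writing
\[
\nabla c(\cdot,t)=\nabla e^{(t-t_0)\Delta}c(\cdot,t_0)-\int_{t_0}^t\nabla e^{(t-s)\Delta}\big(n(\cdot,s)c(\cdot,s)+u(\cdot,s)\cdot\nabla c(\cdot,s)\big)\,ds
\]
for a fixed $t_0\ge 0$, I would estimate each term in $L^\infty(\Om)$. The first term is controlled by $c_1\|\nabla c(\cdot,t_0)\|_{\Li}e^{-\lambda_1(t-t_0)}$ via the standard $L^\infty$-$L^\infty$ gradient estimate for the Neumann semigroup, which already carries an exponential factor with rate $\lambda_1$; since we only require a rate $\eta<\min\{\bar n_0,\lambda_1\}$, this contributes acceptably.

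For the integral terms I would use the $L^p$-$L^q$ estimate $\|\nabla e^{\sigma\Delta}w\|_{\Li}\le c(1+\sigma^{-\frac12-\frac{N}{2q}})e^{-\lambda_1\sigma}\|w\|_{L^q(\Om)}$ with $q>N$ (so that the exponent $-\frac12-\frac{N}{2q}>-1$ is integrable near $\sigma=0$), fixing e.g. $q=q_0$ from Lemma \ref{lem:s_nablac}. The source $nc$ is estimated by $\|n\|_{\Li}\|c\|_{L^{q}(\Om)}\le C\|c\|_{\Li}\le Ce^{-\eta_0 t}$ for any $\eta_0<\bar n_0$ using boundedness of $n$ and Lemma \ref{lem:con:c}; the source $u\cdot\nabla c$ is estimated by $\|u\|_{\Li}\|\nabla c\|_{L^{q}(\Om)}$, where $\|u\|_{\Li}$ is bounded (by Lemma \ref{lem:s_bddu} / \ref{lem:boundu}) and $\|\nabla c\|_{L^{q_0}(\Om)}$ is bounded by Lemma \ref{lem:ns_nablac} or Lemma \ref{lem:s_nablac}. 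Inserting these bounds and applying Lemma \ref{lem:integralestimates} (with the two exponential rates $\lambda_1$ and $\eta_0$, whose difference lies in a fixed compact subinterval of $(0,\infty)$ once $\eta_0$ is fixed) yields
\[
\|\nabla c(\cdot,t)\|_{\Li}\le C e^{-\min\{\lambda_1,\eta_0\} t}+C e^{-\min\{\lambda_1,\eta_0\}t},
\]
i.e. exponential decay with rate $\min\{\lambda_1,\eta_0\}$, and choosing $\eta_0$ slightly above $\eta$ gives the claimed bound for any $\eta<\min\{\bar n_0,\lambda_1\}$.

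The one point requiring a little care — and the main obstacle — is the term $u\cdot\nabla c$: the factor $\|\nabla c\|_{L^{q_0}}$ there is a priori only bounded, not yet known to decay, so a naive estimate only gives that $\|\nabla c(\cdot,t)\|_{\Li}$ stays bounded rather than decays. To close this I would proceed in two steps: first use the boundedness of $\|\nabla c\|_{L^{q_0}}$ together with the decay of $nc$ to show that $\|\nabla c(\cdot,t)\|_{L^{q_0}(\Om)}$ itself decays like $e^{-\eta' t}$ (rerunning the same variation-of-constants estimate in $L^{q_0}$ rather than $L^\infty$, where now every right-hand side factor is bounded and the only genuinely decaying input is $\|c\|_{\Li}$, so that a Gronwall-type or Lemma~\ref{lem:integralestimates} argument gives the decay), and then feed this improved decay of $\|\nabla c\|_{L^{q_0}}$ back into the $L^\infty$ estimate above. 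Alternatively, one observes $u\cdot\nabla c=\nabla\cdot(uc)$ since $\nabla\cdot u=0$, so this term equals $\int_{t_0}^t\nabla e^{(t-s)\Delta}\nabla\cdot(uc)\,ds$, which can be estimated using the bound $\|\nabla e^{\sigma\Delta}\nabla\cdot w\|_{\Li}\le c(1+\sigma^{-1-\frac{N}{2q}})e^{-\lambda_1\sigma}\|w\|_{L^q}$ — but the exponent $-1-\frac{N}{2q}<-1$ is not integrable, so this reformulation is only useful away from $s=t$ and one must split the integral and handle the near-diagonal part separately; for this reason I expect the cleanest route is the two-step bootstrap through $L^{q_0}$.
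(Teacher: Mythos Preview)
Your framework (variation-of-constants for $c$, $L^p$-$L^q$ smoothing of the Neumann semigroup, Lemma~\ref{lem:integralestimates}) is exactly what the paper uses, and you have correctly identified the real obstacle: the transport term $u\cdot\nabla c$ feeds $\|\nabla c\|$ back into its own estimate with no obvious decaying prefactor.

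Where your proposal has a gap is in the proposed resolution. In your Step~1 (the $L^{q_0}$ estimate) the same circularity reappears: the contribution of $u\cdot\nabla c$ to $\|\nabla c(\cdot,t)\|_{L^{q_0}}$ is estimated by $\|u\|_{\Li}\|\nabla c\|_{L^{q}}$, which is again only bounded, not known to decay. The resulting inequality is of the linear Volterra type
\[
g(t)\le C + C\int_0^t (1+(t-s)^{-\alpha})e^{-(\lambda_1-\eta')(t-s)}\,g(s)\,ds,
\qquad g(t):=e^{\eta' t}\|\nabla c(\cdot,t)\|_{L^{q_0}},
\]
and such an inequality does \emph{not} force $g$ to stay bounded unless the $L^1$-norm of the kernel is strictly less than~$1$; in general the solution may grow exponentially. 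So neither ``Gronwall'' nor Lemma~\ref{lem:integralestimates} (which requires both exponential factors to be \emph{given}, not self-referential) closes Step~1. The two-step bootstrap therefore just relocates the problem rather than solving it.

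The paper's device for breaking the loop is a Gagliardo--Nirenberg interpolation applied inside the troublesome integral, directly at the $L^\infty$ level. Fixing $p>N$ and $\theta=1-\tfrac{N}{p}\in(0,1)$, one estimates
\[
\|u\cdot\nabla c\|_{L^p(\O)}\le \|u\|_{\Li}\|\nabla c\|_{L^p(\O)}
\le C\big(\|\nabla c\|_{\Li}^{\theta}\|c\|_{\Li}^{1-\theta}+\|c\|_{\Li}\big).
\]
Setting $M(t)=e^{\eta t}\|\nabla c(\cdot,t)\|_{\Li}$ and $\tilde M=\sup_{(0,T)}M$, the factor $\|c\|_{\Li}^{1-\theta}$ supplies $e^{-(1-\theta)\eta s}$ from Lemma~\ref{lem:con:c}, so after multiplication by $e^{\eta t}$ and Lemma~\ref{lem:integralestimates} one arrives at the \emph{sublinear} closure
\[
\tilde M\le c_5\tilde M^{\theta}+c_6,
\]
which immediately bounds $\tilde M$. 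This single interpolation step replaces your two-step bootstrap and is what you were missing.
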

\begin{proof}
The proof is based on the constants variation formula, $L^p-L^q$ estimates as well as Lemma \ref{lem:integralestimates}.
Recall \eqref{5.1.1}, the first term can be estimated easily
\begin{align}\nn
\|\nabla e^{t\Delta}c_0\|_{L^\infty(\Om)}&\le c_1 t^{-\frac{1}{2}} e^{-\lambda_1 t}\|c_0\|_{\Li}\\\nn
&\le c_1\|c_0\|_{\Li} e^{-\lambda_1 t}
\end{align}
for all $t\ge 1$. With local existence theory, $\norm[\Lin]{\nabla e^{t\Delta}c_0}$ is bounded  for $t>0$. Similar to \eqref{5.1.3}, with
$\norm[L^p(\Om)]{n}\le c_2$ and $\norm[\Lin]{c}\le c_3$, if we choose $p>\frac1N$, Lemma \ref{lem:integralestimates} implies the existence of
$c_4>0$ such that
\begin{align}\nn
\int_0^t\|\nabla e^{(t-s)\Delta} n(\cdot,s)c(\cdot,s)\|_{L^\infty(\Om)}ds
&\le \int_0^t c_1(1+(t-s)^{-\frac12-\frac N{2p}})e^{-\lambda_1(t-s)}\|n(\cdot,s)\|_{L^{p}(\Om)}
\|c(\cdot,s)\|_{L^\infty(\Om)}ds\\\nn
&\le  \int_0^t c_1(1+(t-s)^{-\frac12-\frac N{2p}})e^{-\lambda_1(t-s)}c_2 c_3 e^{-\eta s}ds\\\nn
&\le c_1c_2c_3c_4e^{-\eta t}
\end{align}
for all $t>0$. Moreover, let $\theta=1-\frac{N}{p}\in(0,1)$ and $M(t)= e^{\eta t}\|\nabla\cep(\cdot,t)\|_{\Li}$,
\begin{align}\nn
&~~~~\int_0^t\|\nabla e^{(t-s)\Delta}(u(\cdot,t)\cdot\nabla c(\cdot,t))\|_{L^\infty(\Om)}ds\\\nn
&\le \int_0^t c_1(1+(t-s)^{-\frac12-\frac N{2p}})e^{-\lambda_1(t-s)}\|u(\cdot,s)\cdot\nabla c\|_{L^p(\O)}ds\\\nn
&\le \int_0^t c_1(1+(t-s)^{-\frac12-\frac N{2p}})e^{-\lambda_1(t-s)}\|u(\cdot,s)\|_{\Li}
(\|\nabla c(\cdot,t)\|_{\Li}^\theta
\|c(\cdot,t)\|_{L^\infty(\Om)}^{1-\theta}+\|c(\cdot,s)\|_{\Li})ds\\\nn
&\le c_1\int_0^t(1+(t-s)^{-\frac12-\frac N{2p}})e^{-\lambda_1(t-s)}c_2 M^\theta(s) e^{-\theta \eta s} \norm[\Lin]{c_0}^{1-\theta}e^{-(1-\theta)\eta s}ds\\\nn
&~~~~~~~~~~~~~~~~~~~~~~~~~~~~~~~~~~~~~~~~~~~~~~+ c_1\int_0^t(1+(t-s)^{-\frac12-\frac N{2p}})e^{-\lambda_1(t-s)}c_2 \norm[\Lin]{c_0} e^{-\eta s}ds
\end{align}
for all $t>0$.
If we multiply $e^{\eta t}$ on both sides of (\ref{5.1.1}) and let $\tilde{M}:=\mathop{\sup}\limits_{t\in(0,T)} M(t)$ for any $T\in(0,\infty)$, we find that
\begin{align}\nn
\tilde{M}\le c_5{\tilde{M}}^\theta+c_6,
\end{align}
for some $c_5,c_6>0$, thus $\tilde {M}$ is bounded, which leads to the assertion.
\end{proof}
\begin{lem}\label{lem:con:u}
Let $(n,c,u,P)$ be the classical bounded solution of (\ref{eq}).
There is $C>0$, such that
\begin{align}
\label{bdd:u}
&\norm[L^2(\Om)]{u(\cdot,t)}\le C,\\
\label{bdd:ualpha}
&\norm[L^2(\O)]{A^\alpha u(\cdot,t)}\le C\;\;
\text{ for all }t>0,\\
\label{con:ul2}
&\|u(\cdot,t)\|_{L^2(\O)}\to 0,\text{ as }t\to \infty,\\
\label{con:uinfity}
&\norm[\Lin]{u(\cdot,t)}\to 0,\text{ as } t\to\infty.
\end{align}
\end{lem}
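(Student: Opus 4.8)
The plan is to establish the two uniform-in-time bounds \eqref{bdd:u} and \eqref{bdd:ualpha} first, and then to deduce the two convergence statements \eqref{con:ul2} and \eqref{con:uinfity} from them together with the stabilization of $n$ already obtained in Lemma \ref{lem:con:n}.

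For \eqref{bdd:u} I would argue exactly as in Lemma \ref{lem:4.1}: testing the third equation in \eqref{eq} with $u$, integrating by parts and using Young's and Poincar\'e's inequalities yields
\[
\frac{d}{dt}\intO|u|^2+\lambda_1'\intO|u|^2\le \frac1{\lambda_1'}\|\nabla\phi\|_{\Li}^2\intO n^2,
\]
and since $n$ is a bounded solution the right-hand side is bounded, so \eqref{bdd:u} follows by ODE comparison. For \eqref{bdd:ualpha} the quickest route is to note that it is already contained in the passage to the limit, namely the $\ep$-uniform bound $\|A^\alpha\uep(\cdot,t)\|_{L^2(\Om)}\le C$ from Proposition \ref{epboundedness} combined with the convergence \eqref{con:u}. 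Alternatively one can reprove it directly for $u$ by applying $A^\alpha$ to the variation-of-constants formula and repeating the estimates of Lemma \ref{lem:boundu} (case $N=2$, $\kappa=1$) and Lemma \ref{lem:s_bddu} (case $N=3$, $\kappa=0$): the term containing $n\nabla\phi$ is handled via \eqref{fraction}, Lemma \ref{lem:integralestimates} and the boundedness of $\|n(\cdot,t)\|_{L^2(\Om)}$ (respectively $\|n(\cdot,t)\|_{L^p(\Om)}$ with $p>\tfrac N2$), while the convective term $(u\cdot\nabla)u$, present only when $\kappa=1$, is controlled by $\|(u\cdot\nabla)u\|_{L^2(\Om)}\le\|u\|_{\Li}\|\nabla u\|_{L^2(\Om)}$, the interpolation inequality \eqref{interpolation}, the $L^2$-bound on $\nabla u$ (Lemma \ref{lem:nablau}), and a boot-strap using $a=\tfrac N{4\alpha}<1$.

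The key step is \eqref{con:ul2}. The point is the cancellation
\[
\intO \bar{n}_0\,\nabla\phi\cdot u=\bar{n}_0\intO \nabla\cdot(\phi u)=0,
\]
valid because $\nabla\cdot u=0$ in $\Om$ and $u=0$ on $\pa\Om$. Consequently, testing the $u$-equation with $u$ and using Young's inequality gives
\[
\frac{d}{dt}\intO|u|^2+\lambda_1'\intO|u|^2\le \frac{|\Om|}{\lambda_1'}\|\nabla\phi\|_{\Li}^2\,\|n(\cdot,t)-\bar{n}_0\|_{\Li}^2=:g(t),
\]
and Lemma \ref{lem:con:n} ensures $g(t)\to0$ as $t\to\infty$. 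From the representation $\intO|u(\cdot,t)|^2\le e^{-\lambda_1'(t-t_0)}\intO|u(\cdot,t_0)|^2+\int_{t_0}^t e^{-\lambda_1'(t-s)}g(s)\,ds$ one fixes $\delta>0$, chooses $t_0$ with $g\le\delta$ on $[t_0,\infty)$, lets $t\to\infty$ and then $\delta\to0$ to obtain $\intO|u(\cdot,t)|^2\to0$, which is \eqref{con:ul2}. Finally \eqref{con:uinfity} follows at once from \eqref{con:ul2} and \eqref{bdd:ualpha} together with the Gagliardo--Nirenberg interpolation $\|u(\cdot,t)\|_{\Li}\le c\,\|A^\alpha u(\cdot,t)\|_{L^2(\Om)}^{a}\|u(\cdot,t)\|_{L^2(\Om)}^{1-a}$ from \eqref{interpolation}, whose exponent $a=\tfrac N{4\alpha}$ lies in $(0,1)$ since $\alpha\in(\tfrac N4,1)$, so the right-hand side tends to $0$.

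I expect the only genuinely delicate part to be the decay step \eqref{con:ul2}: everything there hinges on recognising the cancellation that turns the buoyancy forcing into the vanishing quantity $\|n-\bar{n}_0\|_{\Li}$, after which the decay of $\|u(\cdot,t)\|_{L^2(\Om)}$, and hence by interpolation of $\|u(\cdot,t)\|_{\Li}$, is a routine Gronwall-type argument with a vanishing inhomogeneity. All the remaining estimates are repetitions of computations already carried out in Sections 4--6.
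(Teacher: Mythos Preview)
Your proposal is correct and follows essentially the same route as the paper: for \eqref{bdd:u} and \eqref{bdd:ualpha} the paper simply invokes the $\ep$-uniform bounds passed to the limit via \eqref{con:uae} and \eqref{con:u} (your ``quickest route''), and for \eqref{con:ul2} it uses exactly the cancellation $\intO n\nabla\phi\cdot u=\intO(n-\bar n_0)\nabla\phi\cdot u$ followed by Young, Poincar\'e, and the Gronwall argument with vanishing inhomogeneity that you outline, while \eqref{con:uinfity} is deduced from \eqref{con:ul2}, \eqref{bdd:ualpha} and the interpolation \eqref{interpolation} just as you propose.
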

\begin{proof}
First, (\ref{bdd:u}) and \eqref{bdd:ualpha} are immediately obtained by (\ref{con:uae}) and (\ref{con:u}), respectively. Since (\ref{interpolation}) together with (\ref{bdd:ualpha}) and (\ref{con:ul2}) immediately
 implies (\ref{con:uinfity}), it is left to prove (\ref{con:ul2}).
Testing the third equation in (\ref{eq}) with $u$ and integrating by part, we obtain
\begin{align*}
\frac12\frac{d}{dt}\into |u|^2+\into|\nabla u|^2&=\into n\nabla\phi\cdot u=\into(n-\bar{n}_0)\nabla\phi\cdot u \\
&\le \frac{\lambda_1'}{2}\into|u|^2
+\frac{1}{2\lambda_1'}\|\nabla\phi\|_{L^\infty(\Om)}^2\into |n-\bar{n}_0|^2
\end{align*}
for all $t\in(0,\infty)$. Using the Poincar\'e inequality
\begin{align}\label{ineq:u}
\frac{d}{dt}\into |u|^2+\lambda_1'\into|u|^2\le \frac{1}{\lambda_1'}\norm[\Lin]{\nabla\phi} \norm[L^2(\Om)]{n-\bar{n}_0}
\;\;\text{ for all } t>0.\end{align}
By the boundedness of $\norm[\Lin]{n(\cdot,t)}$ and ODE comparison lemma,  we conclude that $\norm[L^2(\Om)]{u(\cdot,t)}<c_1$ for some $c_1>0$ and for all $t>0$. If we apply Lemma \ref{lem:con:n}, for any $\epsilon>0$, we can find $t_0>0$ large enough satisfying 
\begin{align*}\norm[\Lin]{n(\cdot,t)-\bar{n}_0}< \frac{\lambda_1'\epsilon}{\sqrt{2
\norm[\Lin]{\nabla\phi}|\Om|}} \text{ for all } t>t_0.
\end{align*}
Again, (\ref{ineq:u}) with Gronwall's inequality implies that 
\begin{align*}
\into|u(\cdot,t)|^2&\le e^{-\lambda_1'(t-t_0)}\int|u(\cdot,t_0)|^2+\int_{t_0}^t
e^{-\lambda_1'(t-s)}\frac1{\lambda_1'}\norm[\Lin]{\nabla\phi}|\Om|\norm[\Lin]{n(\cdot,t)-\overline{n_0}}^2 ds\\
&\le e^{-\lambda_1'(t-t_0)}c_1^2+\frac{1}{(\lambda_1')^2}\norm[\Lin]{\nabla\phi}|\Om|\norm[\Lin]{n(\cdot,t)-\overline{n_0}}^2\\
&\le \epsilon^2
\end{align*}
 for all $t>\max\{t_0,\frac1{\lambda_1'}\ln e^{\frac{t_0}{\lambda_1'}}\frac{2c_1^2}{\epsilon^2}\}$.
Thus we have shown \eqref{con:ul2}.
%
%
%


%
\end{proof}

Now we are ready to prove the main result.

\begin{proof}[Proof of Theorem \ref{th1}]
The function \((n,c,u)\) obtained as the limit of \((\nep,\cep,\uep)\) is a weak solution of (\ref{eq}) by Lemma \ref{lem:isweaksol}. Moreover, its smooth regularity is guaranteed by Lemma \ref{lem:regularity2}. Hence we know that it solves (\ref{eq}) classically. The boundedness of $\norm[\Lin]{n(\cdot,t)}$ can be seen from Proposition \ref{prop:bddn} and Lemma \ref{con:nae}. Lemma \ref{lem:nc}, \ref{lem:con:nablac} imply that $\norm[W^{1,q}(\O)]{c(\cdot,t)}$ is bounded. Moreover, $u\in L^\infty((0,T);D(A^\beta))$ is asserted by (\ref{bdd:ualpha}). The continuity up to the initial time can be proven similarly as \cite[Lemma 5.8]{cao_lan}; first we prove that for $T>0$, $n_t$, $c_t$ and $u_t$ are in $L^2((0,T);(W^{1,2}(\Om))^*)$ and $L^2((0,T);(W_{0,\sigma}^{1,2}(\Om))^*)$, respectively. Then we can conclude the assertions for $c$ and $u$ by embedding. Using the continuity of $\nep$ and the uniform convergence (\ref{con:nae}), the continuity of $n$ can be done similarly as in \cite[Lemma 5.8]{cao_lan}.
Hence we have proved Theorem \ref{th1}.
\end{proof}

Now we have already shown the convergence of the solution (see Lemma \ref{lem:con:n}, Lemma \ref{lem:con:nablac} and Lemma \ref{lem:con:u}). In order to show the convergence rates are exponential, we only need to apply known result from \cite{cao_lan}, where the Navier-Stokes is considered. However, the arguments there obviously work for the Stokes case which can be seen by dropping the convective term. 
\begin{proof}[Proof of Corollary \ref{cor:con_regularity}]
From \cite[Thm 1]{cao_lan}, we know that for all $m>0$, and $\alpha,\alpha'>0$ as chosen in Corollary \ref{cor:con_regularity}, there is $p_0$, $q_0$ and $\ep_0$ such that if initial data satisfies
\begin{align}\label{init_small}
  \overline{\tilde{n}}_0=\frac1{|\O|}\intO {\tilde{n}_0}=m,\quad \norm[L^{p_0}(\O)]{ {\tilde{n}}_0- \overline{\tilde{n}}_0}\le \ep_0,\quad \norm[\Lin]{ {\tilde{c}}_0}\le\ep_0,\quad \norm[L^N(\O)]{\tilde{u}_0}\le \ep_0,
 \end{align}
the solution fulfilling
\[
\norm[L^\infty(\O)]{n(\cdot,t)-\nbar_0} \leq Ce^{-\alpha t}, \qquad \norm[L^{q_0}(\O)]{ c(\cdot,t)}\leq Ce^{-\alpha t},\qquad \norm[L^\infty(\O)]{u(\cdot,t)}\leq Ce^{-\alpha' t}\;\; \mbox{ for all } t>0.
\]
According to Lemmata \ref{lem:con:n} \ref{lem:con:nablac} and \ref{lem:con:u}, for the aforementioned $\ep_0>0$, there is $T>0$ such that
\[
 \norm[L^{p_0}(\O)]{n(\cdot,t)-\nbar_0}\le \ep_0,\quad \norm[\Lin]{ c(\cdot,t)}\le\ep_0,\quad \norm[L^N(\O)]{u(\cdot,t)}\le \ep_0\;\;\mbox{ for all }t\ge T.
\]
Let $\tilde{n}(\cdot,t)=n(\cdot,t+T)$, $\tilde{c}(\cdot,t)=c(\cdot,t+T)$, $\tilde{u}(\cdot,t)=u(\cdot,t+T)$, it is easy to see that $(\tilde{n}_0,\tilde{c}_0,\tilde{u}_0)=(\tilde{n}(\cdot,0),\tilde{c}(\cdot,0),\tilde{u}(\cdot,0))$ fulfills (\ref{init_small}), we immediately obtain the convergence rate by substituting $(\tilde{n}_0,\tilde{c}_0,\tilde{u}_0)$ into (\ref{init_small}) and uniqueness of solution for (\ref{eq}).
\end{proof}

{\small

\end{document}